\newtheorem{theorem}{Theorem}[section]
\newtheorem{corollary}[theorem]{Corollary}
\newtheorem{definition}[theorem]{Definition}
\newtheorem{example}[theorem]{Example}
\newtheorem{lemma}[theorem]{Lemma}
\newtheorem{proposition}[theorem]{Proposition}
\newtheorem{remark}[theorem]{Remark}
\newcommand{\SP}{$C^*$-stable}
\newcommand{\WSP}{weakly $C^*$-stable}
\newcommand{\MWSP}{matricially stable}
\newcommand{\CC}{\mathbb C}
\newcommand{\FF}{\mathbb F}
\newcommand{\TT}{\mathbb T}
\newcommand{\II}{\mathbb I}
\newcommand{\NN}{\mathbb N}
\newcommand{\QQ}{\mathbb Q}
\newcommand{\RR}{\mathbb R}
\newcommand{\ZZ}{\mathbb Z}
\renewcommand{\phi}{\varphi}
\newcommand{\crystal}[1]{\mbox{${\mathfrak{#1}}$}}
\newcommand{\baso}{\mathsf{BS}}
\newcommand{\mcomm}[2]{\left\{#1,#2\right\}}
\newcommand{\acomm}[2]{\left[#1, #2\right]}
\newcommand{\matrM}[1]{\mathbf{M}_{#1}}
\newcommand{\myu}{\mathsf 1}
\newcommand{\cA}{\mathcal A}
\newcommand{\setof}[2]{\left\{ #1 \; \middle\vert \; #2 \right\}}
\newcommand{\genrel}[2]{\left\langle #1 \; \middle\vert \; #2 \right\rangle}
\DeclareMathOperator{\diag}{diag}
\DeclareMathOperator{\ind}{ind}
\DeclareMathOperator{\Ind}{Ind}
\DeclareMathOperator{\rank}{rank}
\DeclareMathOperator{\Span}{span}
\DeclareMathOperator{\tr}{tr}
\DeclareMathOperator{\wind}{wind}
\newcommand{\SESTART}{}
\newcommand{\SEEND}{}
\renewcommand\vec{\mathbf}
\begin{document}

\title{$C^*$-stability of discrete groups}
\author{S\o ren Eilers \and Tatiana Shulman \and Adam P.W. S\o rensen}
\date{\today}
\maketitle

\begin{abstract}
A group may be considered \emph{$C^*$-stable} if almost representations of the group in a $C^*$-algebra are always close to actual representations.
We initiate a systematic study of which discrete groups are $C^*$-stable or only stable with respect to some subclass of $C^*$-algebras, e.g. finite dimensional $C^*$-algebras.
We provide criteria and invariants for stability of groups and
this allows us to completely determine stability/non-stability of crystallographic groups, surface groups, virtually free groups, and certain Baumslag-Solitar groups.
We also show that among the non-trivial finitely generated torsion-free 2-step nilpotent groups the only $C^*$-stable group is $\ZZ$.
\end{abstract}

\tableofcontents

\section{Introduction}

The notion of stability appears in many forms throughout mathematics, where it often expresses how much being a little bit wrong matters.
Following Hyers and Ulam (\cite{dhh:slfe}) a general sense of this notion can be expressed as follows:
Are elements that ``almost'' satisfy some equation ``close'' to elements that exactly satisfy the equation?
In applied mathematics such a question is important since we should not expect computer models, or indeed the real world, to give answers that precisely match what our theory predicts.

As an example of a concrete stability problem consider the question of whether a set of almost commuting matrices must be close to a set of exactly commuting matrices.
The answer is strongly dependent on the classes of matrices one considers and  which matrix norm one uses to measure ``almost'' and ``close".
When using the operator norm the question is due to Halmos (\cite{Halmos}) and it has a positive answer for pairs of self-adjoint contractions (\cite{Lin}) but a negative answer for pairs of unitary matrices (\cite{dv:acfruowca}) or for triples of self-adjoint contractions (\cite{dv:sec, Dav2}).
When using the normalized Hilbert-Schmidt norm the question was formulated by Rosenthal \cite{Rosenthal}.
Here it has an affirmative answer for all finite sets of almost commuting unitaries, self-adjoint contractions or normal contractions (\cite{Glebsky}, \cite{FilonovKachkovskiy}, \cite{Hadwin-Li}).

There are two ways to formulate stability questions in the setting of operator algebras, either taking an $\epsilon$-$\delta$ approach a la Hyers and Ulam, or in terms of ``approximate'' $*$-homomorphisms  being ``close'' to actual $*$-homomorphisms.
(For precise definitions see Section \ref{Preliminaries}.)
When the norm under consideration is the operator norm, these questions are studied in the theory of (weakly) semiprojective $C^*$-algebras, which has found applications in Elliott's classification program for simple nuclear $C^*$-algebras and in the K-theory and E-theory for $C^*$-algebras.
For a study of stability for operator algebras with respect to trace norms see \cite{HadwinShulman}.

In the setting of group theory stability questions ask if ``approximate'' unitary representations of a given group are ``close'' to actual unitary representations. This topic recently got a lot of attention due to its connection with sofic, hyperlinear and operator-norm approximations of groups.
 Recall that a discrete group $G$ is {\it sofic} if there is an approximate representation $\phi_n \colon G \to Sym(n)$ to  permutations endowed with the Hamming distance which is separating in the sense that
$d_n(\phi_n(g), 1_{Sym(n)})$ is bounded away from
zero for all $ g\neq 1_G$.
A discrete group $G$ is {\it hyperlinear} ({\it MF} respectively) if there is an approximate representation $\phi_n \colon G \to \mathcal U(n)$ which is separating in the sense that
$\|\phi_n(g) -1_{n}\|$ is bounded away from
zero for all $ g\neq 1_G$. The norm here is the normalized Hilbert-Schmidt norm (the operator norm, respectively).
There are three fundamental problems concerning approximation properties of groups:
\begin{itemize}

\item {\it Sofic conjecture}:  Each discrete group is sofic.


\item {\it Connes Embedding Conjecture for groups}: Each discrete group is hyperlinear.


\item{\it MF-question}:  Is each discrete group MF?\footnote{Kirchberg \cite{bbek:gilfdc} conjectured that any stably finite C*-algebra is embeddable
into an norm-ultraproduct of matrix algebras, implying a positive answer to the
MF-question. Recent breakthrough
results imply that any amenable group is MF, see \cite{TWW}.}

\end{itemize}


In \cite{AP} Arzhantseva and Paunescu observed that to disprove the sofic conjecture it is sufficient to find a non-residually finite (non-RF) group which is permutation stable. One also readily sees that to disprove the Connes Embedding Conjecture for groups (and hence the general Connes Embedding Conjecture) it is sufficient to find a non-RF finitely generated group which is stable with respect to the normalized Hilbert-Schmidt norm. And for a negative answer to the MF-question it is sufficient to find a non-RF finitely generated operator norm-stable group. Such an approach was recently successfully implemented by De Chiffre, Glebsky, Lubotzky and Thom for proving that not all groups are Frobenius-approximable (\cite{cglt:scvnog}). Their striking result was achieved by constructing a non-RF Frobenius norm stable group (here the Frobenius norm is the non-normalized Hilbert-Schmidt norm).


Stability of groups has become a question of great interest, and more and more new stability results are coming out.  For the stability with respect to the normalized Hilbert-Schmidt norm numerous examples and criteria can be found in
\cite{FilonovKachkovskiy, Glebsky}, \cite{HadwinShulmanGroups}\footnote{In \cite{HadwinShulmanGroups} stability w.r.t.\ the normalized Hilbert-Schmidt norm was called matricial stability. However here we will call it Hilbert-Schmidt stability  while by matricial stability we always will mean stability w.r.t.\ the operator norm.}, \cite{1809.00632}.
There has been deep work done in the permutation stability setting (\cite{Glebsky2, AP, 1801.08381}, \cite{1811.00578}, \cite{1809.00632}, see also \cite{A} for a survey).
The most mysterious among group-theoretical versions of stability remains the operator norm stability. Although historically approximate representations of groups with respect to the operator norm appeared the first -- in the study of almost flat vector bundles on manifolds (\cite{CGM, ManMisch}) and in  relation to the K-theory of classifying spaces (\cite{ConnesHigson, MischMoh, Man}), almost nothing is known about the operator norm stability or non-stability of concrete groups.
In fact the only results mentioned in the literature is the non-stability of $\ZZ^2$, merely a reformulation of Voiculescu's result about two almost commuting unitaries, and the obvious stability of finitely generated free groups.

In this paper we initiate a systematic study of stability of discrete groups with respect to the operator norm.
We consider both stability for almost representations in all $C^*$-algebras and the more forgiving notion of matricial stability.
We obtain criteria for determining stability/non-stability of groups and produce an invariant which allows to detect when an approximate representation of a group is not close to an actual representation. The invariant has "topological" nature, is easily computable and applies to plenty of finitely presented groups.

With these tools in hand we explore stability of specific classes of groups.
We prove that virtually free groups are stable in the strongest possible sense.
We also investigate stability of virtually abelian groups and encounter a surprise:
Although $\mathbb Z^2$ is not matricially stable by Voiculescu's result, a virtually $\ZZ^2$ group may very well be!
Moreover, we prove that among 2-dimensional crystallographic groups (these are those which contain $\ZZ^2$ as a maximal abelian subgroup of finite index, and there are 17 of them) exactly 12 are matricially stable.
We use our results to determine stability/non-stability of all
crystallographic groups, all surface groups, and certain
Baumslag-Solitar groups.
We also show that a non-trivial finitely
generated torsion-free 2-step nilpotent group $G$ is matricially
stable if and only if $G \cong \ZZ$.

We finish by comparing operator norm stability with Hilbert-Schmidt stability.
All known results on the topic indicate that the Hilbert-Schmidt norm is more friendly to "small perturbations" than the operator norm.
We show here that for amenable groups this is indeed the case:  matricial stability with respect to the operator norm implies Hilbert-Schmidt stability.

\bigskip

\textbf{Acknowledgements.}
We gratefully acknowledge numerous helpful discussions in the early stages of this work with  Tim de Laat and Hannes Thiel concerning Section \ref{Virtually free groups} and with Dominic Enders concerning  Section \ref{Baumslag-Solitar groups}.
In  later stages we have  benefited substantially from discussions with 
 Jamie Gabe,  Sven Raum, Hannes Thiel, and Andreas Thom, and further thank the authors of \cite{KRTW} for making a version of their forthcoming paper available to us.

\SESTART We are grateful to the referees of this paper for useful comments. In particular, following an explicit suggestion to investigate a relation between our work and \cite{DadarlatNew} which we had previously not realized, we have opened a communication with Marius Dadarlat who has informed us that indeed some of the negative results presented below may be obtained by the methods developed there. Our methods are more direct but less general, so this interaction has opened up for a more systematic approach to some questions we leave open here. Dadarlat informs us that he will elaborate these ideas in a forthcoming note \cite{DadarlatComing} to which we will refer in footnotes when relevant.\SEEND


The first named author was supported by the DFF-Research Project 2 'Automorphisms and Invariants of Operator Algebras', no. 7014-00145B, and by the Danish National Research Foundation through the Centre for Symmetry and Deformation (DNRF92).
The second-named author was supported by the Polish National Science Centre grant under the contract number 2019/34/E/ST1/00178.
Finally, the research of the first- and second-named author was supported by the grant H2020-MSCA-RISE-2015-691246-QUANTUM DYNAMICS.

\section{Preliminaries}\label{Preliminaries}

\subsection{Stability (semiprojectivity) for general $C^*$-algebras}

We begin by recalling the definition of (matricially) (weakly) semiprojective $C^*$-algebras.

\begin{definition}[{\cite{bb:stc}}]\label{spdef}
A separable $C^*$-algebra $A$ is \emph{semiprojective} if for every separable $C^*$-algebra $B$, every increasing sequence of ideals $J_1 \subseteq J_2 \subseteq \cdots$ in $B$, and every $*$-homomorphism $\phi \colon A \to B/\overline{\bigcup_k J_k}$, there exists an $n \in \NN$ and a $*$-homomorphism $\psi \colon A \to B/J_n$ such that
\[
	\pi_{n,\infty} \circ \psi = \phi,
\]
where $\pi_{n,\infty} \colon B/J_n \to B/\overline{\bigcup_k J_k}$ is the natural quotient map.
\end{definition}

The following diagram shows the lifting problem one has to solve to prove that a $C^*$-algebra $A$ is semiprojective.
The $*$-homomorphisms associated with the solid arrows are given, and the task is to find $n \in \NN$ and a $*$-homomorphism that fits on the dashed arrow so that the diagram commutes.

\[
	\xymatrix{
		& B/J_n \ar@{->>}[d]^{\pi_{n,\infty}} \\
		A \ar[r]_-{\phi} \ar@{-->}[ur]^-{\psi} & B/\overline{\cup_n J_n}.	
	}
\]

\noindent We may assume without loss of generality that the map $\phi$ is either injective, surjective, or both (\cite[Proposition 2.2]{bb:ssc}).

For finitely presented $C^*$-algebras we can translate the notion of semiprojectivity into an $\epsilon$-$\delta$ version of stability.
There are various definitions of $C^*$-algebra relations (\cite{bb:stc, LoringBook, tal:car}), but for this paper it suffices to consider $*$-polynomials in finitely many (non-commuting) variables and the restrictions on the norms of generators.

\begin{definition}[{\cite{LoringBook}}]
A finitely presented $C^*$-algebra
\begin{multline*}
 A = C^* \langle x_1, x_2 \ldots, x_n \;|\; \mathcal R_i(x_1, x_2, \ldots, x_n)=0,  i = 1, 2, \ldots, N,\\
  \|x_k\| \le C_k, k=1, \ldots, n \rangle
\end{multline*}
is \emph{stable} if, for every $\epsilon > 0$ there is a $\delta>0$ such that:
Given a surjection $\pi \colon D \to B$ and $z_1, z_2, \ldots, z_n \in D$ for which $\|\mathcal R_i (z_1, z_2, \ldots, z_n)\|<\delta$, $i=1,2, \ldots, N$, and for which $\rho \colon A \to B$ given by $\rho(x_k) = \pi(z_k)$ is a $*$-homomorphism, there exists a $*$-homomorphism $\phi \colon A \to D$ such that $\|\phi(x_k) - z_k\|\le \epsilon$ and $\pi(z_k) = \pi(\phi(x_k)),$ $k=1, 2, \ldots, n$.
\end{definition}

By \cite[14.1.4]{LoringBook} the notions of stability and semiprojectivity coincide for finitely presented $C^*$-algebras.

We weaken the notion of semiprojectivity in two steps.

\begin{definition}[{cf. \cite{setal:ccsr}}]\label{weaklydef}
A separable $C^*$-algebra $A$ is \emph{weakly semiprojective} if for every sequence of separable $C^*$-algebras $B_1, B_2, \ldots$, and every $*$-homomorphism $\phi \colon A \to \prod_n B_n / \bigoplus_n B_n$, there exists a $*$-homomorphism $\psi \colon A \to \prod_n B_n$ such that
\[
	\rho \circ \psi = \phi,
\]
where $\rho \colon \prod_n B_n \to \prod_n B_n / \bigoplus_n B_n$ is the natural quotient map.
\end{definition}

\begin{definition}[{cf. \cite{setal:ccsr}}]\label{matrixdef}
A separable $C^*$-algebra $A$ is \emph{matricially weakly semiprojective} if for every sequence of matrix algebras $\matrM{k_1}, \matrM{k_2}, \ldots$, and every $*$-homomorphism $\phi \colon A \to \prod_n \matrM{k_n} / \bigoplus_n \matrM{k_n}$, there exists a $*$-homomorphism $\psi \colon A \to \prod_n \matrM{k_n}$ such that
\[
	\rho \circ \psi = \phi,
\]
where $\rho \colon \prod_n \matrM{k_n} \to \prod_n \matrM{k_n} / \bigoplus_n \matrM{k_n}$ is the natural quotient map.
\end{definition}

One can easily show that the matrix algebras in the definition above can be replaced by arbitrary finite-dimensional $C^*$-algebras.

\begin{remark}
Let $A$ be a $C^*$-algebra.
If $A$ is semiprojective then $A$ is also weakly semiprojective since we can `pad' with zeros to lift from $\prod_n B_n / \bigoplus_{n=1}^{n_0} B_n$ all the way to $\prod_n B_n$.
Clearly if $A$ is weakly semiprojective then $A$ is also matricially weakly semiprojective.
\end{remark}

There are also $\epsilon$-$\delta$ stability properties corresponding to the weakenings of semiprojectivity.

\begin{definition}[\cite{setal:ccsr}, \cite{LoringBook}]
A finitely presented $C^*$-algebra
\begin{multline*}
 A = C^* \langle x_1, x_2 \ldots, x_n \;|\; \mathcal R_i(x_1, x_2, \ldots, x_n)=0,  i = 1, 2, \ldots, N,\\
  \|x_k\| \le C_k, k=1, \ldots, n \rangle
\end{multline*}
is \emph{weakly stable} if, for every $\epsilon > 0$ there is a $\delta>0$ such that:
Given a $C^*$-algebra $B$ and $z_1, z_2, \ldots, z_n\in B$ for which $\|\mathcal R_i (z_1, z_2, \ldots, z_n)\|<\delta$, $i=1, 2, \ldots, N$, there exists a $*$-homomorphism $\phi \colon A \to B$ such that $\|\phi(x_k) - z_k\|\le \epsilon$,$ k=1, 2, \ldots, n$.
\end{definition}

\begin{definition}[\cite{elp:sar}]
A finitely presented $C^*$-algebra
\begin{multline*}
 A = C^* \langle x_1, x_2 \ldots, x_n \;|\; \mathcal R_i(x_1, x_2, \ldots, x_n)=0,  i = 1, 2, \ldots, N,\\
  \|x_k\| \le C_k, k=1, \ldots, n \rangle
\end{multline*}
is \emph{matricially stable} if, for every $\epsilon > 0$ there is a $\delta>0$ such that:
Given a matrix $C^*$-algebra $B$ and $z_1, z_2, \ldots, z_n \in B$ for which $\|\mathcal R_i (z_1,z_2,  \ldots, z_n)\|<\delta$, $i=1,2, \ldots, N$, there exists a $*$-homomorphism $\phi \colon A \to B$ such that $\|\phi(x_k) - z_k\|\le \epsilon$, $k=1,2, \ldots, n$.
\end{definition}

Weak stability is probably the most natural property of stability for $C^*$-algebras as it requires ``almost'' $*$-homomorphisms to be ``close'' to $*$-homomorphisms.
For finitely presented $C^*$-algebras weak semiprojectivity coincides with the weak stability property (\cite[Theorem 4.6]{setal:ccsr}) and matricial weak semiprojectivity coincides with matricial stability (\cite[Theorem 5.6]{setal:ccsr}).

We note that not all $C^*$-algebras admit $*$-homomorphisms into algebras of the form $\prod_n \matrM{k_n} / \bigoplus_n \matrM{k_n}$.
If a $A$ has an embedding into $\prod_n \matrM{k_n} / \bigoplus_n \matrM{k_n}$ for some sequence $(k_n)$ it is called MF (see \cite{bbek:gilfdc}).
A $C^*$-algebra is called residually finite-dimensional (RFD) if it has a separating family of finite-dimensional representations.

\begin{proposition}[{see \cite[Proposition 13.4]{nbp:q}}] \label{RFD}
If $A$ is MF and matricially weakly semiprojective, then $A$ is RFD.
\end{proposition}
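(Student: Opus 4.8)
The plan is to feed the MF embedding into the lifting property supplied by matricial weak semiprojectivity and observe that the resulting lift must be faithful. First I would use MF-ness to fix a sequence $(k_n)_n$ of natural numbers together with an injective $*$-homomorphism $\phi \colon A \to \prod_n \matrM{k_n}/\bigoplus_n \matrM{k_n}$. Then I would apply Definition \ref{matrixdef} (matricial weak semiprojectivity) to exactly this sequence of matrix algebras and to $\phi$, obtaining a $*$-homomorphism $\psi \colon A \to \prod_n \matrM{k_n}$ with $\rho \circ \psi = \phi$, where $\rho$ denotes the quotient map onto $\prod_n \matrM{k_n}/\bigoplus_n \matrM{k_n}$.

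The key observation is that $\psi$ is automatically injective: if $\psi(a) = 0$ then $\phi(a) = \rho(\psi(a)) = 0$, whence $a = 0$ since $\phi$ is injective. Letting $\pi_n \colon \prod_m \matrM{k_m} \to \matrM{k_n}$ be the coordinate projections, each $\pi_n \circ \psi$ is a finite-dimensional representation of $A$, and $\bigcap_n \ker(\pi_n \circ \psi) = \ker \psi = \{0\}$, so $\{\pi_n \circ \psi\}_n$ is a separating family of finite-dimensional representations. Hence $A$ is RFD.

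There is no real obstacle in this argument; the only point worth emphasizing is that injectivity of the lift $\psi$ is forced purely by commutativity of the lifting triangle together with injectivity of $\phi$, so no quantitative stability estimate or structural input about $A$ is needed beyond the two hypotheses. (One should also recall that, as noted after Definition \ref{matrixdef}, it is harmless that $\phi$ lands in a quotient by $\bigoplus_n \matrM{k_n}$ rather than by a direct sum of finite-dimensional algebras, since the two formulations of matricial weak semiprojectivity agree.)
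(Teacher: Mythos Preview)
Your proof is correct and follows essentially the same approach as the paper: use the MF embedding, lift it via matricial weak semiprojectivity, and note that the lift is injective because $\phi$ is. The paper's proof omits the explicit mention of the coordinate projections forming a separating family, but that is exactly the content of its final clause ``therefore $A$ is RFD.''
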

\begin{proof}
Pick an embedding $\phi \colon A \to \prod_n \matrM{k_n} / \bigoplus_n \matrM{k_n}$.
Since $A$ is matricially weakly semiprojective $\phi$ lifts to a map $\psi \colon A \to \prod_n \matrM{k_n}$ such that $\phi = \rho \circ \psi$.
As $\phi$ is injective, we must have that $\psi$ is also injective, and therefore $A$ is RFD.
\end{proof}

\begin{corollary}
If $A$ is MF, simple, and infinite-dimensional, then $A$ cannot be matricially weakly semiprojective.
\end{corollary}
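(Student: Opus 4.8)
The plan is to derive a contradiction from the assumption that $A$ is simultaneously MF, simple, infinite-dimensional, and matricially weakly semiprojective, by invoking Proposition \ref{RFD} and then exploiting simplicity.

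First I would apply Proposition \ref{RFD}: since $A$ is MF and matricially weakly semiprojective, it is RFD, i.e. it admits a separating family $(\pi_\lambda)_{\lambda \in \Lambda}$ of finite-dimensional representations $\pi_\lambda \colon A \to \matrM{k_\lambda}$. Because $A$ is infinite-dimensional it is in particular non-zero, so this family is non-empty and at least one $\pi_\lambda$ is a non-zero $*$-homomorphism.

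Next I would use simplicity: the kernel of $\pi_\lambda$ is a closed two-sided ideal of $A$, and it is proper since $\pi_\lambda \neq 0$; as $A$ is simple this forces $\ker \pi_\lambda = 0$, so $\pi_\lambda$ is a faithful representation. Hence $A$ embeds as a $C^*$-subalgebra of $\matrM{k_\lambda}$, and therefore $\dim A \le k_\lambda^2 < \infty$, contradicting the hypothesis that $A$ is infinite-dimensional. This contradiction shows $A$ cannot be matricially weakly semiprojective.

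There is no real obstacle here: once Proposition \ref{RFD} is in place, the argument is just the standard observation that a simple RFD $C^*$-algebra is finite-dimensional. The only point to be slightly careful about is that ``separating family'' guarantees a \emph{non-zero} finite-dimensional representation when $A \neq 0$, which is exactly where infinite-dimensionality (hence non-triviality) of $A$ enters.
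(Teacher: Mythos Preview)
Your proof is correct and is exactly the argument the paper has in mind: the corollary is stated without proof immediately after Proposition~\ref{RFD}, and the intended deduction is precisely that MF plus matricial weak semiprojectivity forces RFD, while a simple infinite-dimensional $C^*$-algebra cannot be RFD since any non-zero finite-dimensional representation would be faithful.
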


\subsection{Stability for groups}

The focus of this paper is to study when group $C^*$-algebras are ((matricially) weakly) semiprojective.
We will mainly be concerned with discrete countable groups $G$, in which case $C^*(G)$ is unital and separable.
This allows us to disregard the complications regarding weak semiprojectivity for non-unital $C^*$ presented in \cite{tal:pwsac}.
Nevertheless, we give the main definitions as generally as possible.

\begin{definition}
Let $G$ be a second countable and locally compact topological group and let $C^*(G)$ denote the full group $C^*$-algebra of $G$.
\begin{enumerate}[(i)]
\item We say $G$ is \emph{\SP} when $C^*(G)$ is semiprojective.
\item We say $G$ is \emph{\WSP} when $C^*(G)$ is weakly semiprojective.
\item We say $G$ is \emph{\MWSP} when $C^*(G)$ is matricially semiprojective.
\end{enumerate}
\end{definition}

\begin{remark}
As implied by the names we have that if $G$ is \SP\ then $G$ is \WSP, and if $G$ is \WSP\ then $G$ is \MWSP.
\end{remark}

Weak $C^*$-stability and matricial stability can be reformulated in terms of approximate homomorphisms for finitely presented discrete groups, see Propositions \ref{prop:wspandalmostreps} and \ref{prop:mwspandalmostreps}.
For a unital $C^*$-algebra $A$ its unitary group will be denoted by $\mathcal U(A)$.

\begin{definition}
Let $G$ be a discrete group and let $A_n$, $n\in \mathbb N$, be a sequence of $C^*$-algebras.
A sequence of maps $\phi_n \colon G \to \mathcal U(\mathcal A_n)$ is an \emph{approximate homomorphism} if
\[
	\lim_{n\to \infty} \|\phi_n(g_1g_2) - \phi_n(g_1)\phi_n(g_2)\| = 0, \quad \text{ for all } g_1, g_2 \in G.
\]
\end{definition}

When all the $A_n$ are finite-dimensional $C^*$-algebras we refer to approximate homomorphisms as finite-dimensional approximate representations.
The following proposition is trivial.

\begin{proposition}
Let $G$ be a discrete group.
\begin{enumerate}
 \item $G$ is weakly $C^*$-stable if for any approximate homomorphism  $\phi_i \colon G \to \mathcal U(A_i)$ there are $*$-homomorphisms $\rho_i \colon C^*(G) \to A_i$ such that $\lim_{i\to \infty} \|\phi_i(g) - \rho_i(g)\|=0$, for all $g\in G$.
 \item $G$ is matricially stable if for any finite-dimensional approximate representation $\phi_i \colon G \to \mathcal U(A_i)$ there are $*$-homomorphisms $\rho_i \colon C^*(G) \to A_i$ such that $\lim_{i\to \infty} \|\phi_i(g) - \rho_i(g)\|=0$, for all $g\in G$.
\end{enumerate}
\end{proposition}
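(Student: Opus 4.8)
The proposition unwinds, through the definitions of Section~\ref{Preliminaries}, into two assertions: $C^*(G)$ is weakly semiprojective in the sense of Definition~\ref{weaklydef} precisely when every approximate homomorphism is asymptotically close to a sequence of genuine $*$-homomorphisms out of $C^*(G)$, and the same statement with ``finite-dimensional'' inserted throughout and Definition~\ref{matrixdef} in place of Definition~\ref{weaklydef}. The plan is to prove the first equivalence in full and then observe that the finite-dimensional version is the same argument read with every algebra a matrix algebra (automatically unital, which removes one bookkeeping step). Write $u_g \in C^*(G)$ for the canonical unitary implementing $g \in G$, and recall the universal property: for a unital $C^*$-algebra $C$, a $*$-homomorphism $C^*(G) \to C$ is the same datum as a group homomorphism $G \to \mathcal{U}(C)$.

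For the direction ``weak semiprojectivity $\Rightarrow$ the approximate-homomorphism characterization'': given an approximate homomorphism $\phi_n \colon G \to \mathcal{U}(A_n)$, the vanishing of $\|\phi_n(g_1g_2) - \phi_n(g_1)\phi_n(g_2)\|$ shows that $g \mapsto (\phi_n(g))_n + \bigoplus_n A_n$ is a genuine group homomorphism $G \to \mathcal{U}\!\left(\prod_n A_n / \bigoplus_n A_n\right)$, hence extends to a $*$-homomorphism $\Phi \colon C^*(G) \to \prod_n A_n / \bigoplus_n A_n$. A weakly semiprojective lift $\psi \colon C^*(G) \to \prod_n A_n$ of $\Phi$ is, writing $\psi = (\rho_n)_n$ with $\rho_n \colon C^*(G) \to A_n$ (the product map is well defined since $\|\rho_n(a)\|\le\|a\|$), exactly a sequence of representations with $\|\rho_n(u_g) - \phi_n(g)\| \to 0$ for every $g$. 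Conversely, given any $*$-homomorphism $\Phi \colon C^*(G) \to \prod_n B_n / \bigoplus_n B_n$ with the $B_n$ separable, I would first replace each $B_n$ by the corner cut out by a projection lift of $\Phi(1_{C^*(G)})$ (a self-adjoint lift of that projection is close to a projection by functional calculus), reducing to the case where the $B_n$ and $\Phi$ are unital. For each $g$, $\Phi(u_g)$ is a unitary, so any lift $(b_n^g)_n \in \prod_n B_n$ satisfies $\|(b_n^g)^*b_n^g - 1\| \to 0$ and $\|b_n^g(b_n^g)^* - 1\| \to 0$; for large $n$ the element $b_n^g$ is invertible and $\phi_n(g) := b_n^g\big((b_n^g)^*b_n^g\big)^{-1/2}$ is a unitary with $(\phi_n(g))_n$ still a lift of $\Phi(u_g)$ (put $\phi_n(g) = 1$ for the finitely many exceptional $n$). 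Multiplicativity of $\Phi$ forces $\|\phi_n(g_1g_2) - \phi_n(g_1)\phi_n(g_2)\| \to 0$, so $(\phi_n)$ is an approximate homomorphism $G \to \mathcal{U}(B_n)$; the hypothesis yields $*$-homomorphisms $\rho_n \colon C^*(G) \to B_n$ with $\|\rho_n(u_g) - \phi_n(g)\| \to 0$, and then $\psi := (\rho_n)_n$ satisfies $\rho \circ \psi = \Phi$ on the generators $u_g$, hence everywhere, establishing weak semiprojectivity.

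The matricial statement is literally the above with every $B_n$ finite-dimensional: such algebras are unital, so the first reduction in the converse direction evaporates, and one quotes Definition~\ref{matrixdef} instead of Definition~\ref{weaklydef}. Since the whole argument is essentially a change of language, there is no genuine obstacle — hence ``trivial'' — but the two points I would not leave unstated are exactly the ones flagged above: the functional-calculus upgrade of a near-unitary to an honest unitary, which is what manufactures a bona fide approximate homomorphism out of an abstract map into the asymptotic quotient, and the passage to a unital corner, needed only because the definition of approximate homomorphism presupposes unital coefficient algebras while Definition~\ref{weaklydef} does not.
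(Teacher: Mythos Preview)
Your proof is correct and matches the paper's approach: the paper simply declares the proposition ``trivial'' and gives no argument, and what you have written is exactly the routine unwinding of Definitions~\ref{weaklydef} and~\ref{matrixdef} that justifies this. Note that the proposition as stated only asserts the ``if'' direction, so your ``conversely'' paragraph already suffices; the other direction you supply is a harmless bonus.
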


For a finitely presented group $G$ the definitions above can be given an $\epsilon$-$\delta$ formulation that only reference the unitary representations of $G$.

\begin{definition}
Let $G$ be a finitely presented discrete group, say
\[
	G = \genrel{S}{R} = \genrel{ g_1, g_2, \ldots, g_s  }{ r_{1}, r_2, \ldots, r_{l} }.
\]
Assume further that the generating set $S$ of $G$ is symmetric, i.e. if $g \in S$ then $g^{-1} \in S$.
Let $A$ be a $C^*$-algebra, and let $\epsilon > 0$.
A function $f \colon S \to \mathcal{U}(A)$ is an \emph{$\epsilon$-almost homomorphism} if
\[
	\|1 - r_{j}\left( f(g_{1}), f(g_{2}), \ldots, f(g_{s})\right) \| \le \epsilon
\]
for all $j = 1, 2, \ldots, l$.
\end{definition}

The proofs of the following propositions are straight forward once one realizes two things.
One, that a discrete group embeds into the unitary group of its C*-algebra $C^*(G)$. Moreover if a group $G$  has a presentation $$G = \genrel{S}{R} = \genrel{ g_1, g_2, \ldots, g_s  }{ r_{1}, r_2, \ldots, r_{l} },$$ then $C^*(G)$ has the presentation $$C^*(G)  = \genrel{ u_1,  \ldots, u_s  }{ u_i^*u_i=1=u_iu_i^*, \; r_{j}(u_1, \ldots, u_s) = 1, i=1, \ldots, s, j= 1, \ldots, l.}$$
Two, there is a folklore fact that an element of a C*-algebra which is almost unitary (that is  almost satisfies the relation $u^*u=1=uu^*$) must be close to some unitary element.
So if we are given an almost representation of the group $C^*$-algebra, we can move the images of generators a little to make sure they are unitaries, while making sure they still almost satisfy the group relations.

\begin{proposition} \label{prop:wspandalmostreps}
Let $G$ be a finitely presented discrete group.
Them G is weakly $C^*$-stable if and only if the following holds:
For any $\epsilon> 0$ there is $\delta > 0$ such that for any unital $C^{*}$-algebra $A$ and for any $\delta$-almost homomorphism $f \colon S \to \mathcal{U}(A)$ there is a homomorphism $\pi \colon G \to \mathcal{U}(A)$ such that
\[
	\| \pi(g) - f(g)\| \le \epsilon, \quad \text{ for all } g \in S.
\]
\end{proposition}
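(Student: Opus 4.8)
The plan is to treat this proposition as a translation between the $C^*$-algebraic notion of weak stability for $C^*(G)$ and the purely group-theoretic $\epsilon$--$\delta$ condition, using the fact recalled above (from \cite[Theorem 4.6]{setal:ccsr}) that for finitely presented $C^*$-algebras weak semiprojectivity coincides with weak stability. So the first step is to record that, since $G = \genrel{g_1,\dots,g_s}{r_1,\dots,r_l}$ is finitely presented and $S$ is symmetric, $C^*(G)$ is itself finitely presented as a $C^*$-algebra: it has generators $x_1,\dots,x_s$, one relation for each $k$ asserting that $x_k$ is unitary, and the relations $r_j(x_1,\dots,x_s) = 1$ for $j = 1,\dots,l$ (including the relators $g_i g_{i'}$ for the inverse pairs $g_{i'} = g_i^{-1}$ in $S$); and that, by the universal property of the full group $C^*$-algebra, unital $*$-homomorphisms $C^*(G) \to A$ correspond exactly to unitary representations $G \to \mathcal U(A)$. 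After that it suffices to verify the two implications between ``$C^*(G)$ is weakly stable'' and the displayed condition.

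For the forward direction I would fix $\epsilon > 0$, take the $\delta$ provided by weak stability of $C^*(G)$ for this $\epsilon$, and feed it a $\delta$-almost homomorphism $f \colon S \to \mathcal U(A)$: the elements $z_k := f(g_k)$ satisfy the unitarity relations exactly and every relator $r_j$ to within $\delta$, so weak stability produces a $*$-homomorphism $\phi \colon C^*(G) \to A$ with $\|\phi(x_k) - z_k\| \le \epsilon$, and $\pi := \phi|_G$ is the desired representation. For the converse I would fix $\epsilon > 0$ and, given near-solutions $z_1,\dots,z_s \in B$ of the relations of $C^*(G)$, first replace each $z_k$ by the unitary $u_k := z_k(z_k^* z_k)^{-1/2}$ from its polar decomposition (legitimate because $z_k$ is close to a unitary, hence invertible), apply the hypothesis (with $\epsilon/2$ in place of $\epsilon$) to the almost homomorphism $g_k \mapsto u_k$, and re-assemble the resulting representation $\pi$ into a $*$-homomorphism $\phi \colon C^*(G) \to B$ via the universal property; the triangle inequality $\|\phi(x_k) - z_k\| \le \|\pi(g_k) - u_k\| + \|u_k - z_k\|$ then closes the argument once $\delta$ is small.

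The whole thing is essentially bookkeeping, as the paper already signals; the only genuine external input is the semiprojectivity-equals-stability equivalence from \cite{setal:ccsr}. The one point that needs a little care is the perturbation step in the converse: since the group-theoretic hypothesis only controls the finitely many relators $r_1,\dots,r_l$, one must first fix their moduli of uniform continuity on the closed unit ball and only then choose $\delta$ small enough that (i) the polar parts $u_k$ lie within $\epsilon/2$ of the $z_k$ and (ii) the $u_k$ still satisfy every $r_j$ within the tolerance that the hypothesis demands for $\epsilon/2$. This same uniform continuity of $*$-polynomials on bounded sets — the second of the two observations the paper flags — is exactly what makes ``almost satisfying the group relations'' a robust enough condition for the translation to go through in both directions.
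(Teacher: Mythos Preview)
Your proof is correct and follows exactly the approach the paper sketches: it uses the two observations flagged just before the proposition (that the relations defining $C^*(G)$ are unitarity plus the group relators, and that these relators are uniformly continuous on bounded sets), together with the weak-semiprojectivity $\Leftrightarrow$ weak-stability equivalence from \cite{setal:ccsr}, to pass back and forth between the $C^*$-algebraic and group-theoretic $\epsilon$--$\delta$ conditions. The polar-decomposition perturbation you describe for the converse is precisely the ``move the generators a little to make sure they are unitaries'' step the paper alludes to.
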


\begin{proposition} \label{prop:mwspandalmostreps}
Let $G$ be a finitely presented discrete group.
Then $G$ is matricially stable if and only if the following holds:
For any $\epsilon > 0$ there is $\delta > 0$ such that for any  finite-dimensional $C^*$-algebra $A$ and for any $\delta$-almost homomorphism $f \colon S \to \mathcal{U}(A)$ there is a homomorphism $\pi \colon G \to \mathcal{U}(A)$ such that
\[
	\| \pi(g) - f(g)\| \le \epsilon, \quad \text { for all } g \in S.
\]
\end{proposition}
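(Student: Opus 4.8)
The plan is to prove this proposition and Proposition~\ref{prop:wspandalmostreps} at the same time, since the two arguments differ only in whether ``finite-dimensional'' or ``unital'' qualifies the test $C^*$-algebras; I describe the finite-dimensional case. First I would fix the finite presentation of $C^*(G)$ that makes the translation mechanical: with $S=\{g_1,\dots,g_s\}$ symmetric, and (as is standard for a symmetric generating set) with the words $g_kg_k^{-1}$, $k=1,\dots,s$, among the relators $r_1,\dots,r_l$, one has
\[
 C^*(G)=C^*\!\left(\genrel{x_1,\dots,x_s}{\,x_k^*x_k=x_kx_k^*=1\ (k\le s),\ \ r_j(x_1,\dots,x_s)=1\ (j\le l)\,}\right),
\]
and unital $*$-homomorphisms $C^*(G)\to A$ correspond bijectively to homomorphisms $G\to\mathcal U(A)$ via $x_k\mapsto(\text{the image of }g_k)$. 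By the remark following Definition~\ref{matrixdef} we may use finite-dimensional $C^*$-algebras in the definition of a matricially stable $C^*$-algebra, so ``$G$ is matricially stable'' is literally the assertion that the $C^*$-algebra displayed above is matricially stable. The only analytic input is that each relator $r_j$, being a fixed $*$-monomial in the $x_k$, is uniformly continuous on bounded subsets of any $C^*$-algebra, with a modulus depending only on the lengths of $r_1,\dots,r_l$.

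For the implication ``matricially stable $\Rightarrow$ almost-homomorphism condition'': given $\epsilon>0$, apply matricial stability of $C^*(G)$ with tolerance $\epsilon':=\min(\epsilon,1/4)$ to obtain $\delta>0$. If $f\colon S\to\mathcal U(A)$ is a $\delta$-almost homomorphism into a finite-dimensional $A$, then the tuple $z_k:=f(g_k)$ satisfies $z_k^*z_k=z_kz_k^*=1$ exactly and $\|r_j(z_1,\dots,z_s)-1\|\le\delta$ for all $j$, i.e.\ it satisfies the relations above to within $\delta$; matricial stability then yields a $*$-homomorphism $\phi\colon C^*(G)\to A$ with $\|\phi(x_k)-z_k\|\le\epsilon'$. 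The point to verify is that $\phi$ is \emph{automatically unital}: writing $p=\phi(1)=\phi(x_k)\phi(x_k)^*$, one has $\|p-1\|=\|\phi(x_k)\phi(x_k)^*-z_kz_k^*\|\le 2\epsilon'<1$, and a projection at distance $<1$ from $1$ equals $1$. Hence each $\phi(x_k)$ is a genuine unitary of $A$, the assignment $g_k\mapsto\phi(x_k)$ extends to a homomorphism $\pi\colon G\to\mathcal U(A)$ (well defined on the symmetric set $S$ because the relators $g_kg_k^{-1}=e$ force $\phi(x_k)=\phi(x_{k'})^{-1}$ whenever $g_{k'}=g_k^{-1}$), and $\|\pi(g)-f(g)\|\le\epsilon'\le\epsilon$ for all $g\in S$.

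For the converse: assume the almost-homomorphism condition. Given $\epsilon\in(0,1)$, let $\delta_0>0$ be the constant it supplies for $\epsilon/2$, and choose $\delta\in(0,1)$ small enough that whenever $z_1,\dots,z_s$ in a finite-dimensional $A$ satisfy the relations above to within $\delta$ one has: each $z_k$ is invertible, with polar decomposition $z_k=u_k|z_k|$; the unitary $u_k$ obeys $\|u_k-z_k\|=\||z_k|-1\|\le\epsilon/2$ (this follows from $\||z_k|^2-1\|=\|z_k^*z_k-1\|<\delta$ by functional calculus); and $\|r_j(u_1,\dots,u_s)-1\|<\delta_0$ for all $j$ (by uniform continuity of $r_j$ on the ball of radius $2$, using $\|r_j(z_1,\dots,z_s)-1\|<\delta$ and $\|z_k\|\le 2$). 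Then $f(g_k):=u_k$ defines a $\delta_0$-almost homomorphism $S\to\mathcal U(A)$, so the hypothesis produces a homomorphism $\pi\colon G\to\mathcal U(A)$ with $\|\pi(g_k)-u_k\|\le\epsilon/2$; equivalently a unital $*$-homomorphism $\phi\colon C^*(G)\to A$ with $\phi(x_k)=\pi(g_k)$, and then $\|\phi(x_k)-z_k\|\le\|\pi(g_k)-u_k\|+\|u_k-z_k\|\le\epsilon$. Hence $C^*(G)$ is matricially stable.

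I do not expect a serious obstacle here — this is the ``straightforward'' bookkeeping anticipated in the text. The two steps that warrant a little care are the elementary perturbation estimates (polar decomposition bounds $\|u_k-z_k\|$ in terms of $\|z_k^*z_k-1\|$; the word maps $r_j$ are uniformly continuous on bounded sets, so an almost solution of the group relations by unitaries is an almost solution of the $C^*$-algebra relations and vice versa) and the observation in the first implication that the lifted $*$-homomorphism is forced to be unital, so that its values on the generators are honest unitaries of $A$ rather than unitaries in a corner — this is precisely what lets one pass back from a representation of $C^*(G)$ to a unitary representation of $G$. The same argument, with ``finite-dimensional'' replaced by ``unital'' throughout, proves Proposition~\ref{prop:wspandalmostreps}.
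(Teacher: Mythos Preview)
Your proposal is correct and follows exactly the approach the paper indicates: the paper does not give a detailed proof but only records the two key observations --- that the defining relations of $C^*(G)$ are the unitary relations together with the group relators, and that the group relators are uniformly continuous on uniformly bounded sets --- and says the rest is straightforward. Your write-up is a faithful fleshing-out of this sketch, including the one point the paper leaves implicit, namely that in the forward direction the $*$-homomorphism produced by matricial stability is forced to be unital (since $\|\phi(1)-1\|<1$), so that the images of the generators are genuine unitaries of $A$ and yield a homomorphism $G\to\mathcal U(A)$.
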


Some classical results about (matricially) (weakly) semiprojective $C^*$-algebras can be easily translated into the group setting to give the first examples of \SP\ groups.
These results  are all well known and we will reprove them several times in what follows below as special cases.

\begin{example} \
\begin{enumerate}[(i)]
 \item Finite groups are all \SP: If $G$ is a finite group, then $C^*(G)$ is finite dimensional and therefore semiprojective by \cite[Corollary 2.30]{bb:stc}.
 \item The integers form a \SP\ group: It is well known that $C^*(\ZZ) \cong C(\TT)$ which is semiprojective since unitaries lift as required.
 \item The finitely generated free groups $\FF_n$ are \SP\ since $C^*(\FF_n)$ is the universal $C^*$-algebra generated by $n$ unitaries which may be lifted individually.
 \item The infinite dihedral group
 \[
 \ZZ_2 \star \ZZ_2= \genrel{ r,t }{ r^2=t^2=e }
 \]
  is semiprojective since $C^*(\ZZ_2 \star \ZZ_2)$ is the universal $C^*$-algebra generated by two symmetries (equivalently, by two projections) and it is straightforward to lift these individually.
\end{enumerate}
\end{example}

We also mention three examples which demonstrate that the three classes are different.

\begin{example} \
\begin{enumerate}
 \item The infinitely generated free group $\FF_\infty$ is not \SP, as it fails to satisfy the conclusion of \cite[Corollary 2.10]{bb:ssc}, but it is \WSP. Indeed, one may just lift each unitary generator separately to $\prod B_n$ as in Definition \ref{weaklydef} and use the universal property to create a lifting.
 \item $\ZZ^2$ is not \MWSP\ since $C^*(\ZZ^2)=C(\TT^2)$ is not matricially semiprojective. Indeed, the famous Voiculescu matrices  (\cite{dv:acfruowca}, see also \cite{el:acum}) demonstrate that there exist sequences of almost commuting unitary matrices that are never close to exactly commuting unitary matrices.
 \item The group $C^*$-algebra associated to the crystallographic group
 \[
 \crystal{pg}= \genrel{ x,y }{ xy=y^{-1}x }= \genrel{ p,q }{ p^2 = q^2 }.
 \]
(the two representations are isomorphic via $p=x,q=xy$) was proved to be \MWSP\ in \cite{elp:sar}. It is not \SP\ by the work of Enders (\cite{ed:cssc}), and we prove below that it is not \WSP\ either. This group may also be described as the fundamental group of the Klein bottle or as the Baumslag-Solitar group $\baso(1,-1)$.
 \end{enumerate}
\end{example}

Recall that a group is {\it maximally almost periodic} (MAP) if it has a separating family of finite-dimensional representations.
A group is {\it residually finite} (RF) if it has a separating family of homomorphisms to finite groups.

A group is MF if it embeds into the unitary group of  $\prod_n \matrM{k_n} / \bigoplus_n \matrM{k_n}$.
If a group $G$ does not admit any homomorphism to the unitary group of $\prod_n \matrM{k_n} / \bigoplus_n \matrM{k_n}$, then $G$ is automatically matricially stable.
However if such a group exists it must be rather exotic, since by Kirchberg's conjecture all discrete groups are MF.
All discrete amenable groups are known to be MF by \cite{TWW}.

\begin{proposition} \label{prop:MFisNice}
Let $G$ be a discrete  group.
The following holds:
$$ G \;\text{is MF and matricially stable}  \Rightarrow G \;\text{is}\; MAP.$$
If $G$ is finitely generated,  then
$$ G \;\text{is MAP} \Leftrightarrow G \;\text{is RF}.$$
\end{proposition}
\begin{proof}
The first statement is obvious.
The second statement is folklore. We write its proof for the reader's convenience.
Suppose $G$ is MAP and let $e\neq g\in G$. Then there is a finite-dimensional representation $\pi$ of $G$ such that $\pi(g)\neq 1$. Since $\pi(G)$ is a finitely generated linear group, by the Malcev Theorem it is RF (\cite{am:imrifg}, Th. VII). Therefore there exists a homomorphism $f$ from $\pi(G)$ to a finite group such that $f(\pi(g))\neq e$. Thus the homomorphism $f\circ \pi$ to a finite group is not trivial on $g$. This means $G$ is RF.
\end{proof}

\begin{corollary}
If $G$ is discrete, amenable, and matricially stable then $G$ is MAP.
\end{corollary}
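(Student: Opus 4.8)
The plan is to observe that this corollary is an immediate consequence of Proposition~\ref{prop:MFisNice} once we feed in the one substantial external input already recorded above, namely that every discrete amenable group is MF (Tikuisis--White--Winter, \cite{TWW}). So first I would note that if $G$ is discrete and amenable then $C^*(G)$ is MF, hence $G$ is an MF group in the sense used here. Then Proposition~\ref{prop:MFisNice} applies directly: since $G$ is moreover matricially stable, the first implication in that proposition (which in turn rests on Proposition~\ref{RFD}) gives that $C^*(G)$ is RFD, and the second --- entirely formal --- implication gives that $G$ is MAP. That completes the argument; no calculation is required.

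In other words, there is essentially no obstacle internal to this proof: the only non-formal ingredient is the quasidiagonality/MF-ness of amenable group $C^*$-algebras, which is cited rather than proved, and everything else is the bookkeeping already carried out in Proposition~\ref{prop:MFisNice}. If one wanted to be self-contained one would have to reproduce the \cite{TWW} machinery, which is far outside the scope here; but taking that result as a black box, the corollary follows in one line. I would present it simply as: ``By \cite{TWW}, $G$ is MF; now apply Proposition~\ref{prop:MFisNice}.''
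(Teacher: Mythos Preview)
Your proposal is correct and matches the paper's intended argument exactly: the corollary is stated without proof in the paper, and the surrounding text makes clear that it follows immediately from the fact (cited just before Proposition~\ref{prop:MFisNice}) that discrete amenable groups are MF by \cite{TWW}, together with Proposition~\ref{prop:MFisNice}.
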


\subsection{The Voiculescu matrices} \label{sec:voiculescu}

Throughout this paper we will often rely on a sequence of matrices introduced by Voiculescu:
For each $n \in \NN$ we let $\omega_n = \exp(\frac{2 \pi i}{n})$ and define $S_n, \Omega_n \in \matrM{n}$ by
\[
	S_n =
	\begin{pmatrix}
	0 & 0 & \cdots & 0 & 1 \\
	1 & 0 & \cdots & 0 & 0 \\
	0 & 1 & \cdots & 0 & 0 \\
	  &   & \ddots & & \\
	0 & 0 & \cdots & 1 & 0
	\end{pmatrix}	
	\quad
	\text{ and }
	\quad
	\Omega_n =
	\begin{pmatrix}
	\omega_n^1 & 0 & \cdots & 0 \\
	0 & \omega_n^2 & \cdots & 0 \\
	  & & \ddots & \\
	0 & 0 & \cdots & \omega_n^n
	\end{pmatrix}.
\]
We call these the \emph{Voiculescu matrices}.
They are the prime example of almost commuting unitaries that are not close to exactly commuting unitaries, as shown by Voiculescu in \cite{dv:acfruowca}.
Rather than following the ideas of \cite{dv:acfruowca}, we will extend the winding number approach from \cite{el:acum}.
We will explain in Section \ref{sec:ExelLoring} how to associate a winding number invariant to a set of unitaries, and throughout Section \ref{sec:classes} we will then use the Voiculescu matrices to construct paths with non-zero winding number.

We now fix our notation for commutators.
If $A,B$ either are elements of a group or invertible elements in a $C^*$-algebra then $\mcomm{A}{B}$ denotes the multiplicative commutator of $A$ and $B$, that is
\[
\mcomm{A}{B} = ABA^{-1}B^{-1}.
\]
For any two $A,B$ in a $C^*$-algebra we use $\acomm{A}{B}$ to denote their additive commutator, i.e.,
\[
\acomm{A}{B} = AB - BA.
\]
We note that if $U,V$ are unitaries in some unital $C^*$-algebra then
\[
\|\mcomm{U}{V} - 1\| = \|\acomm{U}{V}\|.
\]

\section{Stability criteria}

We present here results which may be used to decide stability properties.
We will show that any virtually free group is \SP, and provide useful criteria for determining that certain groups fail to have the weaker properties.

\subsection{Virtually free groups}\label{Virtually free groups}

Recall that a group is called virtually free if it contains a free subgroup of finite index.
If $H$ is a free subgroup of $G$ of finite index, we can define a normal subgroup $K$ of $G$ by
\[
	K = \bigcap_{g \in G} g H g^{-1}.
\]
Note that the intersection is only over finitely many different conjugates, since $H$ has finite index, and therefore $K$ must also have finite index.
Further, as $K$ is a subgroup of $H$ it is free by the Nielsen-Schreier Theorem (see for instance \cite[Theorem 1A.4]{ah:at}.
Thus, any virtually free group contains a free normal subgroup of finite index.

Our proof that all finitely generated virtually free groups are $C^*$-stable is built on a description of such groups as HNN extensions of tree products in \cite{akapds:ficefg}, and we thank Tim de Laat and Hannes Thiel for drawing this method to our attention.

We first make some general observations about stability, amalgamated products and HNN extensions.

\begin{lemma} \label{lem:AmalgamtedproductsCommute}
If $\Gamma = G_1 \star_H G_2$ is an amalgamated product of discrete countable groups then $C^*(\Gamma) \cong C^*(G_1) \star_{C^*(H)} C^*(G_2)$.
\end{lemma}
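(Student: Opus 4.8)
The plan is to exhibit $C^*(\Gamma)$ as the amalgamated free product $C^*(G_1) \star_{C^*(H)} C^*(G_2)$ by checking that it has the correct universal property. Recall that $C^*(G_1) \star_{C^*(H)} C^*(G_2)$ is, by definition, the pushout in the category of $C^*$-algebras of the diagram $C^*(G_1) \hookleftarrow C^*(H) \hookrightarrow C^*(G_2)$, where the inclusions are the canonical ones induced by $H \hookrightarrow G_i$. So I must show $C^*(\Gamma)$ satisfies the corresponding pushout property: for every $C^*$-algebra $D$ and every pair of $*$-homomorphisms $\sigma_i \colon C^*(G_i) \to D$ with $\sigma_1|_{C^*(H)} = \sigma_2|_{C^*(H)}$, there is a unique $*$-homomorphism $\sigma \colon C^*(\Gamma) \to D$ with $\sigma|_{C^*(G_i)} = \sigma_i$ (here I am using the canonical maps $C^*(G_i) \to C^*(\Gamma)$ induced by $G_i \hookrightarrow \Gamma$, which I should note first are well-defined).

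First I would record the functoriality facts: any group homomorphism $\varphi \colon K \to L$ induces a $*$-homomorphism $C^*(\varphi) \colon C^*(K) \to C^*(L)$ (by universality of $C^*(K)$, sending the unitary $u_k$ to the unitary $u_{\varphi(k)}$), and this is functorial. Applying this to $G_i \hookrightarrow \Gamma$ and $H \hookrightarrow G_i$ gives the canonical maps and shows the square $C^*(H) \to C^*(G_i) \to C^*(\Gamma)$ commutes, so by the universal property of the pushout there is a canonical $*$-homomorphism $\Phi \colon C^*(G_1)\star_{C^*(H)}C^*(G_2) \to C^*(\Gamma)$. The main content is to produce an inverse. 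Given $\sigma_i \colon C^*(G_i) \to D$ agreeing on $C^*(H)$, restrict to unitary groups: $\sigma_i$ restricted to $G_i \subseteq \mathcal U(C^*(G_i))$ gives group homomorphisms $G_i \to \mathcal U(D)$ that agree on $H$ (because $\sigma_1, \sigma_2$ agree on the unitaries $u_h$, $h\in H$). By the universal property of the amalgamated product of \emph{groups}, these glue to a single group homomorphism $\Gamma \to \mathcal U(D)$, which by universality of $C^*(\Gamma)$ extends to a $*$-homomorphism $\sigma \colon C^*(\Gamma) \to D$ restricting to $\sigma_i$ on each $C^*(G_i)$. Uniqueness of $\sigma$ follows since $\Gamma$ generates $C^*(\Gamma)$ as a $C^*$-algebra. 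This verifies the pushout property, hence $C^*(\Gamma) \cong C^*(G_1)\star_{C^*(H)}C^*(G_2)$, and one checks this isomorphism is the map $\Phi$ above, or simply invokes uniqueness of objects satisfying a universal property.

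The step I expect to require the most care is the very first one — knowing that the canonical maps $C^*(G_i) \to C^*(\Gamma)$ are the "right" maps and, more subtly, that the inclusions $C^*(H) \hookrightarrow C^*(G_i)$ appearing in the definition of the amalgamated $C^*$-product are genuinely injective (so that "amalgamated over $C^*(H)$" makes sense as stated). Injectivity of $C^*(H) \to C^*(G_i)$ when $H \le G_i$ holds because $C^*(H)$ admits a conditional-expectation-type splitting, or more simply because every representation of $H$ is a subrepresentation of (the restriction of) an induced representation of $G_i$ — equivalently the canonical map is a split injection at the level of full group $C^*$-algebras; I would cite this standard fact. Beyond that, the argument is a pure "chase the universal properties" and should be routine; no estimates or analysis are involved, only the interplay between the universal property of $C^*(-)$ and that of amalgamated products in the two categories.
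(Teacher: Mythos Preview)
Your proposal is correct and follows essentially the same route as the paper: verify that $C^*(\Gamma)$ satisfies the pushout universal property by restricting the given $\sigma_i$ to the group elements, invoking the universal property of $\Gamma = G_1\star_H G_2$ in groups, and then the universal property of the full group $C^*$-algebra. The paper adds one small step you glossed over: when you write ``$\sigma_i$ restricted to $G_i$ gives group homomorphisms $G_i\to\mathcal U(D)$,'' you are tacitly assuming $D$ is unital and $\sigma_i$ is unital; the paper fixes this by replacing $D$ with the corner $\sigma_1(1)D\sigma_1(1)$ (noting $\sigma_1(1)=\sigma_2(1)$ since both equal the image of the unit of $C^*(H)$). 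Your extra discussion of injectivity of $C^*(H)\to C^*(G_i)$ is more careful than the paper, which simply takes the amalgamated free product to mean the pushout and does not address this point.
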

\begin{proof}
We have the following commutative diagram
\[
\xymatrix{
	C^*(H) \ar[r] \ar[d]		& C^*(G_1) \ar[d]^{\iota_1} \\
	C^*(G_2) \ar[r]_{\iota_2}	& C^*(\Gamma)
}
\]
where all the maps are induced by the corresponding group maps.
Since $\Gamma$ is generated by the copies inside it of $G_1$ and $G_2$, we have that $\iota_1(C^*(G_1)) \cup \iota_2(C^*(G_2))$ generate $C^*(\Gamma)$.
Hence to show $C^*(\Gamma) \cong C^*(G_1) \star_{C^*(H)} C^*(G_2)$ we only need to show, that $C^*(\Gamma)$ has the right universal property.
So suppose we have $*$-homomorphisms $\phi_i \colon C^*(G_i) \to A$, $i=1,2$, that agree on $C^*(H)$.
By replacing $A$ by $\phi(1)A\phi(1)$, we can assume that $A$ is unital.
Hence $\phi_i$ yields a representation of $G_i$ and furthermore these representation agree on $H$.
The universal property of $\Gamma$ now gives a representation of $\Gamma$ in the unitary group of $A$, which in turn gives us a $*$-homomorphism $\psi \colon C^*(\Gamma) \to A$.
We have that
\[
	\phi_i(u_g) = \psi(u_g), \quad g \in G_i,
\]
for $i = 1,2$.
\end{proof}

\begin{corollary} \label{cor:AmalgamatedProductSp}
Let $\Gamma = G_1 \star_H G_2$ be an amalgamated product of discrete countable groups.
If $G_i$ is $C^*$-stable for $i=1,2$ and $H$ is finite, then $\Gamma$ is $C^*$-stable.
\end{corollary}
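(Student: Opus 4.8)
The plan is to reduce everything to a direct verification of Definition \ref{spdef} for the amalgamated free product. By Lemma \ref{lem:AmalgamtedproductsCommute} we have $C^*(\Gamma) \cong A$ with $A := C^*(G_1) \star_{C^*(H)} C^*(G_2)$, and since $H$ is finite the algebra $F := C^*(H)$ is finite-dimensional; moreover the inclusions $F \hookrightarrow C^*(G_i)$ induced by $H \leq G_i$ are unital. So fix a separable $C^*$-algebra $B$, an increasing chain of ideals $J_1 \subseteq J_2 \subseteq \cdots$, and a $*$-homomorphism $\phi \colon A \to D_\infty$, where I abbreviate $D_n := B/J_n$ and $D_\infty := B/\overline{\bigcup_k J_k}$. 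Restricting $\phi$ to the two canonical copies gives $\phi_i \colon C^*(G_i) \to D_\infty$, and these agree on $F$ with a common map $\phi_H \colon F \to D_\infty$.

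First I would lift the pieces separately. Since $F$ is finite-dimensional it is semiprojective, so $\phi_H$ lifts to some $\sigma \colon F \to D_{n_0}$; and since each $G_i$ is \SP, i.e.\ $C^*(G_i)$ is semiprojective, $\phi_i$ lifts to some $\psi_i \colon C^*(G_i) \to D_{m_i}$. Composing with the connecting maps I may push all three to a common index $m = \max(n_0, m_1, m_2)$, obtaining lifts $\psi_i \colon C^*(G_i) \to D_m$ of $\phi_i$ and $\sigma \colon F \to D_m$ of $\phi_H$. The obstruction is that $\psi_1$ and $\psi_2$ need not agree on $F$ and so cannot yet be glued. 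But for each $i$ the maps $\psi_i|_F$ and $\sigma$ are two lifts of $\phi_H$ through $\pi_{m,\infty}$, hence agree modulo $\ker \pi_{m,\infty} = \overline{\bigcup_{n \geq m} \ker \pi_{m,n}}$; choosing a finite system of matrix units for $F$ and pushing forward to a large enough index $n \geq m$, I can arrange that the images of those matrix units under $\psi_i|_F$ and $\sigma$ are as close as I wish in $D_n$, while still agreeing modulo $\ker \pi_{n,\infty}$.

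The key step, which I would isolate as a lemma, is a refinement of the standard fact that nearby systems of matrix units are unitarily conjugate: if two systems of matrix units in a $C^*$-algebra $D$ are close enough \emph{and} agree modulo an ideal $I \triangleleft D$, then they are conjugate by a unitary $w$ in the unitization $\widetilde{D}$ with $\|w - 1\|$ small \emph{and} $w \equiv 1 \pmod I$. (Indeed, the usual intertwiner $v$ built from the two families already lies in $q + I$, where $q$ is the unit of one of them, so the unitary coming from the polar part of a suitable correction of $v$ lies in $1 + I$.) Applying this with $D = D_n$ and $I = \ker \pi_{n,\infty}$ yields unitaries $w_i \in \widetilde{D_n}$ with $\pi_{n,\infty}(w_i) = 1$ and $w_i\, \psi_i(x)\, w_i^{*} = \sigma(x)$ for all $x \in F$. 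Then $\hat\psi_i := \mathrm{Ad}(w_i) \circ \psi_i \colon C^*(G_i) \to D_n$ still lifts $\phi_i$ (because $\pi_{n,\infty}(w_i) = 1$), and now $\hat\psi_1$ and $\hat\psi_2$ agree on $F$. By the universal property of the amalgamated free product (Lemma \ref{lem:AmalgamtedproductsCommute}) they assemble into a single $*$-homomorphism $\Psi \colon A \to D_n$; since $\pi_{n,\infty} \circ \Psi$ agrees with $\phi$ on each $C^*(G_i)$, and these generate $A$, we get $\pi_{n,\infty} \circ \Psi = \phi$, proving $A$ — hence $C^*(\Gamma)$ — semiprojective.

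I expect the matrix-unit refinement (the clause $w \equiv 1 \pmod I$) to be the only real obstacle: without it, conjugating $\psi_i|_F$ onto $\sigma$ would perturb $\hat\psi_i$ to a lift of $\mathrm{Ad}(\pi_{n,\infty}(w_i)) \circ \phi_i$ rather than of $\phi_i$, and since the perturbations for $i=1$ and $i=2$ are unrelated the glued map would only lift a perturbation of $\phi$, not $\phi$ itself. Everything else — finite-dimensionality giving semiprojectivity of $F$, pushing forward to a common index, invoking the universal property — is routine, and the same argument in fact shows that an amalgamated free product of semiprojective $C^*$-algebras over a finite-dimensional subalgebra is semiprojective.
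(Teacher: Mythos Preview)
Your argument is correct, and in fact you have rediscovered (and sketched the proof of) the general statement the paper simply quotes: the paper's proof is a one-line citation of \cite[Proposition 2.32]{bb:stc}, which asserts that amalgamated free products of semiprojective $C^*$-algebras over finite-dimensional $C^*$-algebras are semiprojective, and then Lemma~\ref{lem:AmalgamtedproductsCommute} finishes. Your route is the self-contained one: lift each factor separately, push to a common stage, and then align the two restrictions to $C^*(H)$ by a unitary that is trivial modulo the relevant ideal so that the universal property of the amalgam applies without disturbing the original target map. The ``refinement'' you isolate---that the intertwiner $v=\sum_{k,i}\beta(e^{(k)}_{i1})\alpha(e^{(k)}_{1i})$ lies in $\alpha(1_F)+I$, whence $z=v+(1-\beta(1_F))(1-\alpha(1_F))$ lies in $1+I$, intertwines exactly, and has polar part a unitary in $1+I$---is precisely Blackadar's mechanism, and your diagnosis that this is the only nontrivial step is accurate. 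What the paper's approach buys is brevity; what yours buys is that a reader sees why finite-dimensionality of $H$ is exactly what is needed (a finite system of matrix units to control, so that ``close modulo $I$'' can be upgraded to ``unitarily conjugate by something $\equiv 1$ modulo $I$'').
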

\begin{proof}
By \cite[Proposition 2.32]{bb:stc} amalgamated products of semiprojective $C^*$-algebras over finite dimensional $C^*$-algebras are semiprojective.
\end{proof}

\begin{definition}[see \cite{rclpes:cgt}]
Let $G$ be a group with presentation $G = \genrel{ S }{ R }$, let $H,K$ be isomorphic subgroups of $G$, and let $\alpha \colon H \to K$ be an isomorphism. Let $t$ be a new symbol not in $S$, and define
\[
    G \star_{\alpha} = \genrel{ S,t }{ R, tht^{-1}=\alpha(h), \text{ for all } h \in H }.
\]
The group $G \star_{\alpha}$ is called the HNN extension of $G$ relative to $H,K$, and $\alpha$.
\end{definition}

In \cite{yu:rheoa} two notions of HNN extensions for $C^*$-algebras are discussed.
As one might expect, if we have a unital $C^*$-algebra $A$, with isomorphic unital (same unit as $A$) subalgebras $B,D$, and isomorphism $\alpha \colon B \to D$, then the universal HNN extension of $A$ by $\alpha$ is the $C^*$-algebra generated by $A$ and a unitary $u$ that implements $\alpha$ on $B$.
We denote this by $A \star_{\alpha}$.
Ueda also considers a reduced HNN extension.
Similarly to the case for amalgamated products HNN extensions of discrete countable groups ``commutes'' with taking $C^*$-algebras of groups.

\begin{lemma} \label{lem:HNNCommute}
Let $G$ be a countable discrete group, $H,K$ isomorphic subgroups of $G$ with isomorphism $\alpha$.
Then $C^*(G \star_{\alpha}) \cong C^*(G) \star_{\tilde{\alpha}}$, where $\tilde{\alpha}$ denotes the isomorphism between the unital subalgebras of $C^*(G)$ generated by $H$ and $K$.
\end{lemma}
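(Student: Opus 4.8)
**Proof plan for Lemma (HNN extensions commute with taking group $C^*$-algebras).**

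The plan is to mimic the strategy used for amalgamated products in Lemma \ref{lem:AmalgamtedproductsCommute}: exhibit $C^*(G \star_\alpha)$ as a $C^*$-algebra generated by a copy of $C^*(G)$ together with a single unitary implementing $\tilde\alpha$ on the subalgebra generated by $H$, and then verify the universal property that characterizes the universal HNN extension $C^*(G)\star_{\tilde\alpha}$. First I would fix notation: write $B \subseteq C^*(G)$ for the unital $C^*$-subalgebra generated by (the canonical unitaries of) $H$, and $D \subseteq C^*(G)$ for the one generated by $K$; since $\alpha \colon H \to K$ is a group isomorphism it extends to a $*$-isomorphism $\tilde\alpha \colon B \to D$ by functoriality of $C^*(-)$ on the groups $H$ and $K$ (note $B \cong C^*(H)$ and $D \cong C^*(K)$ canonically). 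Inside $C^*(G \star_\alpha)$ we have the canonical unitary $u = u_t$ coming from the stable letter $t$, and the inclusion $G \hookrightarrow G\star_\alpha$ induces a unital $*$-homomorphism $\iota \colon C^*(G) \to C^*(G \star_\alpha)$; the defining relations $t h t^{-1} = \alpha(h)$ for $h \in H$ give $u\, \iota(b)\, u^* = \iota(\tilde\alpha(b))$ first for $b$ a canonical unitary of $H$ and then, by linearity, multiplicativity and continuity, for all $b \in B$. Since $G \star_\alpha$ is generated by $G$ and $t$, the image $\iota(C^*(G)) \cup \{u\}$ generates $C^*(G\star_\alpha)$.

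Next I would check the universal property. Suppose we are given a unital $C^*$-algebra $A$ (the reduction $\phi(1)A\phi(1)$ trick as in the amalgamated case lets us assume unitality), a unital $*$-homomorphism $\phi \colon C^*(G) \to A$, and a unitary $v \in \mathcal U(A)$ with $v\,\phi(b)\,v^* = \phi(\tilde\alpha(b))$ for all $b \in B$. Restricting $\phi$ to the canonical unitaries gives a unitary representation of $G$ on $A$, and the intertwining relation applied to the canonical unitaries of $H$ says precisely that $v\, \phi(u_h)\, v^* = \phi(u_{\alpha(h)})$ for all $h \in H$. The universal property of the HNN extension $G\star_\alpha$ (as a group, sending $S$ to $\phi(u_s)$ and $t$ to $v$) then yields a unitary representation of $G\star_\alpha$ in $\mathcal U(A)$, hence a unital $*$-homomorphism $\psi \colon C^*(G\star_\alpha) \to A$ with $\psi \circ \iota = \phi$ and $\psi(u) = v$. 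Uniqueness of $\psi$ is immediate since $\iota(C^*(G))$ and $u$ generate. This establishes that $C^*(G\star_\alpha)$ satisfies the universal property defining $C^*(G)\star_{\tilde\alpha}$, so the two are canonically isomorphic.

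The one genuinely delicate point — and the step I expect to be the main obstacle — is that the universal HNN extension construction $A \star_{\tilde\alpha}$ for $C^*$-algebras, following \cite{yu:rheoa}, a priori requires $B$ and $D$ to be \emph{unital} subalgebras sharing the unit of $A = C^*(G)$; one must confirm this compatibility so that the construction is well-defined and so that the comparison with Ueda's definition is legitimate. Here $B$ and $D$ both contain the identity of $C^*(G)$ because they are generated by group elements including $e_H = e_G$ and $e_K = e_G$, so this is fine, but it should be stated. A secondary subtlety is the passage from the intertwining relation on generators of $H$ to all of $B$: one uses that both sides of $v\,\phi(\cdot)\,v^* = \phi(\tilde\alpha(\cdot))$ are $*$-homomorphisms on $B$ agreeing on a generating set, hence equal — routine, but worth a sentence. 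Everything else parallels the amalgamated-product argument verbatim, so I would keep the write-up short and refer back to the proof of Lemma \ref{lem:AmalgamtedproductsCommute} for the shared bookkeeping.
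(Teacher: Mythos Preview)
Your proposal is correct and is precisely the approach the paper intends: the paper's proof of this lemma consists only of the sentence ``Similar to the proof of Lemma~\ref{lem:AmalgamtedproductsCommute},'' and what you have written is exactly the natural unpacking of that sentence, checking generation and the universal property via the group universal property of the HNN extension. The subtleties you flag (unitality of $B,D$ inside $C^*(G)$ and extending the intertwining relation from generators to all of $B$) are real but routine, and your treatment of them is fine.
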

\begin{proof}
Similar to the proof of Lemma \ref{lem:AmalgamtedproductsCommute}.
\end{proof}

\begin{proposition} \label{prop:HNNisSP}
Let $G$ be a $C^*$-stable countable discrete group and let $\alpha \colon H \to K$ be an isomorphism between two subgroups of $G$.
If $H$ (and hence $K$) is finite, then $G \star_{\alpha}$ is $C^*$-stable.
\end{proposition}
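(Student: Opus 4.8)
The plan is to reduce the claim about $G \star_{\alpha}$ to the statement, already invoked in Corollary \ref{cor:AmalgamatedProductSp}, that semiprojectivity is preserved under amalgamated free products and under adjoining a unitary that implements an isomorphism between finite-dimensional corners. By Lemma \ref{lem:HNNCommute} we have $C^*(G \star_{\alpha}) \cong C^*(G) \star_{\tilde\alpha}$, where $\tilde\alpha$ is an isomorphism between the unital subalgebras $B, D \subseteq C^*(G)$ generated by the finite subgroups $H$ and $K$. Since $H$ is finite, $C^*(H) \cong B$ is finite-dimensional, and likewise $D$; so the task is to show that adjoining a unitary implementing an isomorphism between two finite-dimensional unital subalgebras of a semiprojective $C^*$-algebra keeps it semiprojective. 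This is exactly the kind of statement that appears in Blackadar's work (\cite{bb:stc}), and I would cite the relevant result there (the HNN-type construction, or equivalently the universal $C^*$-algebra generated by a semiprojective algebra and a partial isometry/unitary subject to finitely many relations over a finite-dimensional subalgebra).

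First I would spell out the reduction: invoke Lemma \ref{lem:HNNCommute} to replace the group-theoretic HNN extension with the $C^*$-algebraic one $C^*(G) \star_{\tilde\alpha}$, and note that $C^*(G)$ is semiprojective by hypothesis (that is what \SP{} means for $G$). Then I would identify $C^*(G) \star_{\tilde\alpha}$ as a universal $C^*$-algebra: it is generated by $C^*(G)$ together with one unitary $u$ subject to the relations $u b u^* = \tilde\alpha(b)$ for $b$ ranging over a (finite, since $B$ is finite-dimensional) generating set of $B$. Because $B$ is finite-dimensional, this is a finitely presented extension of the semiprojective algebra $C^*(G)$.

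The core step is then the semiprojectivity of this extension. One clean way to package it: $C^*(G)\star_{\tilde\alpha}$ can be written as an amalgamated free product or a pushout of $C^*(G)$ with a semiprojective "model" algebra over the finite-dimensional algebra $B \oplus D$ (or over a finite-dimensional algebra built from $B$, $D$, and the unitary relations), in such a way that Blackadar's permanence result \cite[Proposition 2.32]{bb:stc} — amalgamation of semiprojective algebras over finite-dimensional ones is semiprojective — applies directly, just as in the proof of Corollary \ref{cor:AmalgamatedProductSp}. Concretely, the algebra generated by a unitary $u$ and its "source" and "range" corners, with $u$ implementing $\tilde\alpha$, is semiprojective because the defining relations over the finite-dimensional subalgebra lift: one can lift the finite-dimensional part on the nose and then perturb a unitary lift of $u$ to exactly intertwine, since the relevant unitary-lifting problems over finite-dimensional algebras are solvable.

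The main obstacle I expect is precisely making the last point rigorous without re-deriving Blackadar's machinery: one must be careful that the isomorphism $\tilde\alpha$ is between unital subalgebras sharing the unit of $C^*(G)$, and that the HNN construction for $C^*$-algebras from \cite{yu:rheoa} genuinely coincides with the universal/pushout description needed to apply \cite[Proposition 2.32]{bb:stc}. If a direct citation for "adjoining an $\tilde\alpha$-implementing unitary preserves semiprojectivity" is not available in the stated form, the fallback is to argue it by hand via the lifting diagram in Definition \ref{spdef}: given $\phi \colon C^*(G)\star_{\tilde\alpha} \to B_\infty/\overline{\bigcup J_k}$, restrict to $C^*(G)$, lift that to some $B/J_n$ by semiprojectivity of $C^*(G)$, observe that the two finite-dimensional subalgebras and the conjugation relation lift at a possibly larger stage $n'$ (finite-dimensional subalgebras lift, and a lift of the unitary $u$ can be corrected to implement $\tilde\alpha$ exactly because the obstruction lives in a finite-dimensional algebra and vanishes after increasing the stage), and assemble these into the required $\psi$. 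I would present the citation-based argument as the main line and mention the hands-on version only if the referee pushes back.
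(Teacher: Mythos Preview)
Your strategy---reduce to Blackadar's permanence result \cite[Proposition 2.32]{bb:stc} for amalgamated free products over finite-dimensional subalgebras---is exactly the paper's strategy. What you are missing is the concrete mechanism that realizes $C^*(G)\star_{\tilde\alpha}$ (or a matrix amplification of it) as such an amalgam. You write that the HNN extension ``can be written as an amalgamated free product \ldots\ over the finite-dimensional algebra $B\oplus D$ (or over a finite-dimensional algebra built from $B$, $D$, and the unitary relations),'' but you do not say how, and this is the whole point.

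The paper fills this gap by invoking Ueda's isomorphism \cite[Proposition 3.3]{yu:rheoa}: with $D=C^*(H)$ and $A=C^*(G\star_\alpha)$, one has
\[
\matrM{2}(A)\;\cong\;\bigl(C^*(G)\otimes\matrM{2}\bigr)\star_{D\oplus D}\bigl(D\otimes\matrM{2}\bigr).
\]
Now both factors are semiprojective (the first because $C^*(G)$ is and semiprojectivity is preserved under matrix amplification, the second because it is finite-dimensional), the amalgamation is over the finite-dimensional algebra $D\oplus D$, and \cite[Proposition 2.32]{bb:stc} gives semiprojectivity of $\matrM{2}(A)$; one then transfers back to $A$ via \cite[Proposition 2.27]{bb:stc}. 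This $\matrM{2}$-trick is the standard way to turn an HNN extension into an amalgam, and it is precisely the step your write-up leaves as a black box.

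Your fallback hands-on lifting argument is plausible but also underdeveloped: the delicate point is not lifting the unitary $u$, but correcting its lift so that it \emph{exactly} conjugates the lifted copy of $B$ onto the lifted copy of $D$ via $\tilde\alpha$. This can be done, but it amounts to reproving a special case of the Ueda/Blackadar machinery rather than citing it, and your sketch does not address how the correction is performed.
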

\begin{proof}
For ease of notation put $A = C^*(G \star_{\alpha})$ and $D = C^*(H)$.
Following \cite{yu:rheoa} we let $\cA = (C^*(G) \otimes \mathbf{M}_2) \star_{D \oplus D} (D \otimes \mathbf{M}_2)$.
By Lemma \ref{lem:HNNCommute} and \cite[Proposition 3.3]{yu:rheoa} we have $\mathbf{M}_2(A) \cong \cA$.
Hence $A$ is semiprojective if $\cA$ is (\cite[Proposition 2.27]{bb:stc}).

Since $H$ is finite $D$ is a finite-dimensional $C^*$-algebra. Hence to see that $\cA$ is semiprojective, it suffices to show that $C^*(G) \otimes \matrM{2}$ and $D \otimes \matrM{2}$ are semiprojective (and then apply \cite[Proposition 2.32]{bb:stc}).
By assumption $C^*(G)$ is semiprojective and $D$ is semiprojective since it is finite dimensional.
Therefore $C^*(G) \otimes \matrM{2}$ and $D \otimes \matrM{2}$ are semiprojective.
\end{proof}

The final group construction we need is tree products.
For the definition we refer to section 2 of \cite{gkcyt:spctpg}.

\begin{lemma} \label{lem:TreeProduct}
Let $G$ be a finite tree product with finite edge groups.
If the vertex group $G_v$ is $C^*$-stable for all vertices $v$, then $G$ is $C^*$-stable.
\end{lemma}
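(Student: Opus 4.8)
The plan is to induct on the number of edges of the underlying tree $T$. If $T$ has no edges, then $G$ is a single vertex group $G_v$, which is $C^*$-stable by hypothesis, so the base case is immediate. (Note that $C^*$-stability of a group forces its group $C^*$-algebra to be separable, hence each $G_v$, and therefore $G$ itself, is countable; this is what lets us invoke the results of the previous subsection.)

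For the inductive step, suppose $T$ has at least one edge and pick a leaf $v_0$ of $T$, i.e.\ a vertex meeting exactly one edge $e_0$; let $v_1$ be the other endpoint of $e_0$. Deleting $v_0$ and $e_0$ from $T$ yields a subtree $T'$ with one fewer edge, and the tree product $G'$ associated to $T'$ (with the same vertex groups and edge groups, now indexed by the vertices and edges of $T'$) again has finite edge groups and $C^*$-stable vertex groups. So by the induction hypothesis $G'$ is $C^*$-stable. By the standard description of tree products as iterated amalgamated products (see section 2 of \cite{gkcyt:spctpg}), splitting off the leaf realizes
\[
 G \cong G' \star_{G_{e_0}} G_{v_0},
\]
where $G_{e_0}$ is identified with its (finite) image in $G_{v_0}$ on one side and with its image in $G_{v_1} \subseteq G'$ on the other.

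Now $G'$ and $G_{v_0}$ are $C^*$-stable and $G_{e_0}$ is finite, so Corollary \ref{cor:AmalgamatedProductSp} applies directly and shows that $G = G' \star_{G_{e_0}} G_{v_0}$ is $C^*$-stable, completing the induction.

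The only point requiring real care --- and the main thing I would want to state cleanly --- is the structural claim that deleting a leaf of $T$ exhibits $G$ as an amalgam of the remaining tree product with the leaf vertex group over the corresponding edge group; everything else is a straightforward application of Corollary \ref{cor:AmalgamatedProductSp} together with the transfer of $C^*$-stability through amalgams over finite subgroups. I would also explicitly check that the edge groups of the reduced tree product $G'$ are precisely the edge groups of $T$ other than $G_{e_0}$, so that the hypothesis ``all edge groups finite'' is genuinely preserved along the induction.
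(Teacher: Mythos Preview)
Your proof is correct and follows essentially the same approach as the paper: both argue by induction (you on the number of edges, the paper on the number of vertices, which is equivalent for trees), split off a leaf to write $G$ as an amalgam of the remaining tree product with the leaf vertex group over the finite edge group, and then invoke Corollary~\ref{cor:AmalgamatedProductSp}. The paper handles your ``structural claim'' by citing \cite[Remark 3.5]{gkcyt:spctpg}, so you may wish to do the same rather than reproving it.
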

\begin{proof}
We will prove the lemma by induction on the number of vertices.
If there is only one vertex, $v$ say, there are no edges, and so the tree product is simply the vertex group $G_v$.
By assumption $G_v$ is $C^*$-stable.

Assume now, that the Lemma is true for all trees with $n$ vertices, and that we are given a tree product with $n+1$ vertices.
By \cite[Remark 3.5]{gkcyt:spctpg} we can write the tree product as $G \cong A \star_{C} T$, where $A$ is the vertex group of a leaf, $C$ is the edge group connecting $A$ to the tree, and $T$ is the tree product of the other $n$ vertex groups.
The inductive hypothesis tells us that $T$ is $C^*$-stable, and by assumption $A$ is $C^*$-stable.
Since $C$ is finite, it follows from Corollary \ref{cor:AmalgamatedProductSp} that $G$ is $C^*$-stable.
\end{proof}

We now have all we need to show that finitely generated virtually free groups are $C^*$-stable.

\begin{theorem}\label{vfissp}
All finitely generated virtually free groups are \SP.
\end{theorem}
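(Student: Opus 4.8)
The plan is to feed the structure theory of finitely generated virtually free groups into the permanence results established above. The key group-theoretic input is the theorem of Karrass--Pietrowski--Solitar \cite{akapds:ficefg}: every finitely generated virtually free group $G$ can be realized as an HNN extension, with finitely many stable letters, of a finite tree product of finite groups in which every edge group is finite. (This is the description alluded to just before the statement.) So I would begin by invoking this theorem and fixing such a decomposition, say $G = \genrel{ G_0, t_1, \ldots, t_k }{ R_0,\ t_j H_j t_j^{-1} = \alpha_j(H_j),\ j = 1, \ldots, k }$, where $G_0$ is the finite tree product and each $H_j$ is a finite subgroup of $G_0$.

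The base case is that $G_0$ is \SP. Each vertex group of $G_0$ is finite, so its full group $C^*$-algebra is finite dimensional and hence semiprojective by \cite[Corollary 2.30]{bb:stc}; since all edge groups of $G_0$ are finite, Lemma \ref{lem:TreeProduct} then applies verbatim and gives that $G_0$ is \SP. Next I would realize the HNN extension with $k$ stable letters as an iterated HNN extension, adjoining one stable letter at a time: set $G_0 \subseteq G_1 \subseteq \cdots \subseteq G_k = G$, where $G_j = (G_{j-1}) \star_{\alpha_j}$ is the HNN extension of $G_{j-1}$ determined by the isomorphism $\alpha_j$ between the (finite) subgroups $H_j$ and $\alpha_j(H_j)$ of $G_0 \subseteq G_{j-1}$. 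Assuming inductively that $G_{j-1}$ is \SP\ (a countable discrete group, indeed finitely generated), Proposition \ref{prop:HNNisSP} applies because $H_j$ is finite, and yields that $G_j$ is \SP. After $k$ steps we conclude that $G = G_k$ is \SP, which is the assertion of the theorem. (If one prefers the graph-of-groups language, one can alternatively peel off amalgamations and HNN extensions over finite subgroups one at a time using Corollary \ref{cor:AmalgamatedProductSp} and Proposition \ref{prop:HNNisSP} together.)

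The analysis is all hidden in the permanence lemmas already proved, so the remaining work — and the only real obstacle — is bookkeeping around the structure theorem: one must check that \cite{akapds:ficefg} genuinely produces a base of the required form (a \emph{finite} tree product of \emph{finite} groups with \emph{finite} edge groups) and that the associated subgroups appearing at every stage of the iterated HNN construction are honestly finite, so that the finiteness hypotheses of Corollary \ref{cor:AmalgamatedProductSp}, Proposition \ref{prop:HNNisSP}, and Lemma \ref{lem:TreeProduct} are met at each step. Once the decomposition is pinned down, the induction on the number of stable letters closes the argument with no further estimates.
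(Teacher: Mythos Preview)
Your proposal is correct and follows essentially the same line as the paper's own proof: invoke the Karrass--Pietrowski--Solitar structure theorem to write $G$ as an iterated HNN extension (over finite subgroups) of a finite tree product of finite groups, apply Lemma~\ref{lem:TreeProduct} to the base, and then iterate Proposition~\ref{prop:HNNisSP}. Your added remarks on bookkeeping and the alternative via Corollary~\ref{cor:AmalgamatedProductSp} are fine but not needed beyond what the paper already does.
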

\begin{proof}
Let $G$ be a finitely generated virtually free group.
By \cite[Theorem 1]{akapds:ficefg} there is a finite tree product of finite groups $K$, such that $G$ is an iterated HNN extension of $K$ by finite groups.
Lemma \ref{lem:TreeProduct} tells us that $K$ is \SP.
Repeated applications of Proposition \ref{prop:HNNisSP} then gives that $G$ is \SP.
\end{proof}

\subsection{Exel-Loring type invariant} \label{sec:ExelLoring}

Here we introduce an invariant which allows to see that certain groups fail to be matricially stable.
It will be applied in Sections \ref{Crystal groups}--\ref{Baumslag-Solitar groups}

\begin{definition}
A relation
\[
	R(x_1,\ldots, x_N) = x_{i_1}^{k_1}\ldots x_{i_s}^{k_s}
\]
is \emph{homogeneous} if
\begin{equation}\label{form}
	\sum_{\{j:  i_j = i\}} k_j  = 0,
\end{equation}
for each $1 \le i \le N$.
For any homogeneous relation $R$, we let $L(R)=\sum_j |k_j|$.
\end{definition}

We will consider groups $G = \genrel{ x_1, \ldots, x_N }{ R_l(x_1,\ldots, x_N) = e, l \in \mathbb N }$ presented by relations some of which are homogeneous, and we will see that for them there is a ``winding number obstruction'' for being \MWSP, very similar to the Exel-Loring winding number obstruction (\cite{el:acum}) for the commutation relation.

Let $R$ be some group relation.
For any invertible matrices $V_1, \ldots, V_N $ define a curve $\gamma^{V_1,\ldots, V_N}$ by
\[
	\gamma^{ V_1,\ldots, V_N}(r)  = \det (r R(V_1,\ldots, V_N) + (1-r)\myu), 0\le r\le 1.
\]
When $R$ is homogeneous, $\det R(V_1,\ldots, V_N) = 1$ and hence
\[
	\gamma^{V_1,\ldots, V_N}(0) = 1 = \gamma^{V_1,\ldots, V_N}(1).
\]
Thus $\gamma^{V_1,\ldots, V_N}$ is a closed curve.

\begin{theorem}\label{exelloring}
Let $R$ be a homogeneous relation, and let $X_1, X_2, \ldots, X_N \in \matrM{n}$ be unitary matrices. Suppose that
\[
	\| R(X_1,X_2,\ldots, X_N) - \myu_n \|<1/2.
\]
If there exist unitary matrices $A_1, A_2, \ldots, A_N\in \matrM{n}$ such that $R(A_1,A_2, \ldots, A_N) = \myu_n$ and $ \max_{1\le i\le N} \|X_i-A_i\| < \frac{1}{2L(R)}$,  then
\[
	\wind \gamma^{X_1,X_2,\ldots, X_N} = 0.
\]
\end{theorem}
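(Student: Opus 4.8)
The plan is to realize $\gamma^{X_1,\ldots,X_N}$ as freely homotopic, through loops in $\CC\setminus\{0\}$, to the constant loop at $1$, by deforming the tuple $(X_1,\ldots,X_N)$ to $(A_1,\ldots,A_N)$ inside the unitaries while keeping $R$ close to $\myu$ along the way. Note first that implicitly $L(R)\ge 1$, so $\|X_i-A_i\|<\tfrac1{2L(R)}\le\tfrac12$; in particular each unitary $X_iA_i^*$ satisfies $\|X_iA_i^*-\myu\|=\|X_i-A_i\|<1$, hence has spectrum in the open right half-plane and admits a self-adjoint logarithm $X_iA_i^*=\exp(\mathrm i H_i)$ with $\mathrm{spec}(H_i)\subseteq(-\pi/3,\pi/3)$. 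Set $X_i(t)=\exp(\mathrm i tH_i)A_i$ for $t\in[0,1]$, a norm-continuous path of unitaries from $A_i$ (at $t=0$) to $X_i$ (at $t=1$). Using that $u\mapsto 2|\sin(u/2)|$ is nondecreasing on $[0,\pi/2]$, functional calculus gives $\|X_i(t)-A_i\|=\|\exp(\mathrm i tH_i)-\myu\|\le\|\exp(\mathrm i H_i)-\myu\|=\|X_i-A_i\|<\tfrac1{2L(R)}$ for every $t\in[0,1]$.

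Next I would control $R$ along this deformation. Writing $R$ out as a product of $L(R)$ letters from $\{x_1^{\pm1},\ldots,x_N^{\pm1}\}$ and combining the telescoping estimate $\|u_1\cdots u_m-v_1\cdots v_m\|\le\sum_j\|u_j-v_j\|$ for unitaries with $\|u^{-1}-v^{-1}\|=\|u-v\|$, I obtain, for each $t\in[0,1]$,
\[
\|R(X_1(t),\ldots,X_N(t))-\myu\|=\|R(X_1(t),\ldots,X_N(t))-R(A_1,\ldots,A_N)\|\le L(R)\max_i\|X_i(t)-A_i\|<\tfrac12 .
\]
Moreover, by homogeneity of $R$, $\det R(X_1(t),\ldots,X_N(t))=\prod_i(\det X_i(t))^{\sum_{\{j:i_j=i\}}k_j}=1$ for every $t$, so each $\gamma^{X_1(t),\ldots,X_N(t)}$ is a closed loop based at $1$.

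Finally I would assemble the homotopy. Define $H\colon[0,1]^2\to\CC$ by
\[
H(r,t)=\det\bigl(\myu+r\bigl(R(X_1(t),\ldots,X_N(t))-\myu\bigr)\bigr)=\gamma^{X_1(t),\ldots,X_N(t)}(r).
\]
This is jointly continuous, and it is nowhere zero because $\|r(R(X_1(t),\ldots,X_N(t))-\myu)\|<\tfrac12<1$ keeps the argument of $\det$ invertible; thus $H$ takes values in $\CC\setminus\{0\}$. By the previous paragraph $H(0,t)=H(1,t)=1$ for all $t$, so $H$ is a homotopy of based loops in $\CC\setminus\{0\}$ from $\gamma^{A_1,\ldots,A_N}$ to $\gamma^{X_1,\ldots,X_N}$. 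Since $R(A_1,\ldots,A_N)=\myu$, the loop $\gamma^{A_1,\ldots,A_N}$ is constantly $\det(\myu)=1$ and hence has winding number $0$; as the winding number is a homotopy invariant of loops in $\CC\setminus\{0\}$, we conclude $\wind\gamma^{X_1,\ldots,X_N}=0$.

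The point to be careful about is precisely the pair of uniform estimates that force $H$ into $\CC\setminus\{0\}$: that the paths $X_i(t)$ stay within $\tfrac1{2L(R)}$ of $A_i$ for \emph{all} intermediate $t$ (which is why I use the exponential path and the monotonicity of $2|\sin(u/2)|$, rather than, say, a straight-line path), and the telescoping bound upgrading this to $\|R(X_1(t),\ldots,X_N(t))-\myu\|<\tfrac12$ uniformly in $t$. Once these are in place the rest is bookkeeping — homogeneity guarantees every curve in sight is genuinely closed, and the conclusion is just homotopy invariance of the winding number. (As a byproduct, note the hypothesis $\|R(X_1,\ldots,X_N)-\myu\|<\tfrac12$ is itself a consequence of the hypothesis on the $A_i$, via the same telescoping estimate at $t=1$.)
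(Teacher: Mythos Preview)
Your proof is correct and follows the same overall strategy as the paper's: interpolate from the $X_i$ to the $A_i$, check that each intermediate curve $\gamma$ is closed (by homogeneity) and avoids $0$, then invoke homotopy invariance of the winding number. The execution differs in two places. First, the paper uses the straight-line path $A_i^{(t)}=tA_i+(1-t)X_i$ through merely invertible matrices, which forces a separate estimate on $\|(A_i^{(t)})^{-1}-X_i^{-1}\|$; your exponential path stays inside the unitaries, so the telescoping bound is immediate and the monotonicity of $2|\sin(u/2)|$ handles the intermediate distances in one stroke. Second, the paper verifies invertibility of $rR(A_1^{(t)},\ldots)+(1-r)\myu$ by comparing it to $R(X_1,\ldots,X_N)$ and invoking the standing hypothesis $\|R(X_1,\ldots,X_N)-\myu\|<\tfrac12$, whereas you bound $\|R(X_1(t),\ldots)-\myu\|<\tfrac12$ directly and never use that hypothesis --- indeed, as you correctly observe, it is already implied by $\max_i\|X_i-A_i\|<\tfrac{1}{2L(R)}$. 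Your route is marginally cleaner; the paper's has the virtue of making the role of the displayed hypothesis explicit.
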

\begin{proof}
For each $0\le t\le 1$ let
\begin{equation} \label{eq:ait}
	A_i^{(t)} = tA_i + (1-t)X_i = t(A_i-X_i) + X_i.
\end{equation}
Then
\begin{equation}\label{eq:ait-x}
	\|A_i^{(t)}-X_i\|\le \|A_i-X_i\|< \frac{1}{2L(R)}.
\end{equation}
In particular it follows that all $A_i^{(t)}$ are invertible.
Then for each $0\le t\le 1$ the curve $\gamma^{A_1^{(t)},\ldots, A_N^{(t)}}$ is well-defined and it is a homotopy between $\gamma^{X_1,\ldots, X_N}$ and $\gamma^{A_1,\ldots, A_N}$.
We will show that for any $0\le t\le 1 $, $\gamma^{A_1^{(t)},\ldots, A_N^{(t)}}$ does not go through $0$.
Using (\ref{eq:ait-x}), we obtain by standard estimates that 
$$\left\|\left(A_i^{(t)}\right)^{-1}\right\|\le \frac{1}{1-\|A_i^{(t)} - X_i\|} < 2.$$ Therefore we obtain
\begin{equation}\label{eq:ait-xinv}
\|(A_i^{(t)})^{-1} - (X_i)^{-1}\|\le \|(A_i^{(t)})^{-1}\| \|A_i^{(t)}-X_i\| \|(X_i)^{-1}\| < \frac{1}{L}.\end{equation}
So
\begin{gather*}
 \|(r R(A_1^{(t)},\ldots, A_N^{(t)}) + (1-r)\myu) - R( X_1,\ldots, X_N) \|  \\\le   r\|R( A_1^{(t)},\ldots, A_N^{(t)}) - R( X_1,\ldots, X_N)\| + (1-r)\|\myu_n - R( X_1,\ldots, X_N)\|\\
  <
 r L \frac{1}{L} + (1-r) \frac{1}{2} \le  1
\end{gather*}
(here the summand $L \frac{1}{L}$ was obtained by applying the standard adding-subtracting trick $L$ times and using (\ref{eq:ait-x}) and (\ref{eq:ait-xinv})).
Hence $r R(A_1^{(t)},\ldots, A_N^{(t)}) + (1-r)\myu_n $ is at distance less than 1 from the unitary matrix $R( X_1,\ldots, X_N)$ and hence is invertible itself.
Hence its determinant is not zero, which means that $\gamma^{A_1^{(t)},\ldots, A_N^{(t)}}$ does not go through $0$.
Thus for each curve in the homotopy, the winding number is well-defined.
Since $R(A_1, \ldots, A_N)=\myu_n$, the curve $\gamma^{A_1, \ldots, A_N}$ is a constant curve, so $\wind \gamma^{A_1, \ldots, A_N} = 0.$
Since the winding number is a homotopy invariant, we conclude that
\[
	\wind \gamma^{X_1, \ldots, X_N} = \wind \gamma^{A_1, \ldots, A_N} = 0. \qedhere
\]
\end{proof}

\subsection{Virtually abelian groups}

We recall that a group $G$ is said to be virtually abelian if it contains an abelian subgroup $H$ of finite index.
Our interest here is finitely generated virtually abelian groups. Since a finite index subgroup of finitely generated group is necessarily  finitely generated,  a finitely generated virtually abelian group contains a finitely generated abelian subgroup $H$ of finite index, and by the structure of finitely generated abelian groups we can further assume that $H \cong \ZZ^m$ for some $m \in \NN$.
Arguing as for the virtually free groups, we can arrange for $H$ to be normal.
Thus a finitely generated group $G$ is virtually abelian if and only if it contains a normal subgroup $H$ isomorphic to $\ZZ^m$ for some $m \in \NN$.
The number $m$ is unique, and we call it the rank of $G$.

Let $G$ be a finitely generated virtually abelian group.
If $G$ has rank $1$, then it is virtually free and therefore \SP\ by Theorem \ref{vfissp}.
To understand the case where $G$ has rank $m > 1$, we first build a $*$-homomorphism $\alpha \colon C^*(G) \to C(\II^m, \matrM{N})$.
In the case $m = 2$ we use $\alpha$ to produce an unsolvable lifting problem that proves $G$ is not \WSP.
If $m \geq 3$ we combine $\alpha$ with the Voiculescu matrices to build an unsolvable lifting problem showing that $G$ is not even \MWSP.

\begin{theorem} \label{MapToScalarFunctions}
Let $G$ be a group and $H\cong \mathbb Z^m$ be a normal subgroup of index $N < \infty$.
There exists a $*$-homomorphism $\alpha \colon C^*(G) \to C(\II^m, \matrM{N})$ such that $\alpha(C^*(H))$ contains all the scalar-valued functions, i.e.,
\[
	\alpha(C^*(H))\supseteq C(\II^m, \mathbb C \myu_N).
\]
\end{theorem}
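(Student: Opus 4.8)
The plan is to build $\alpha$ out of a family of induced representations of $G$, one for each unitary character of $H$, and then restrict that family to a small cube sitting inside $\widehat H \cong \TT^m$. Here are the pieces. Since $H$ is normal and abelian, conjugation descends to an action of the finite group $Q := G/H$ on $H \cong \ZZ^m$, and dually to an action of $Q$ on $\widehat H \cong \TT^m$ by automorphisms of finite order. Fix coset representatives $e = g_1, \dots, g_N$ for $H$ in $G$, so that $\bar g_1, \dots, \bar g_N$ enumerate $Q$. For $\chi \in \widehat H$ let $\pi_\chi = \operatorname{Ind}_H^G \chi$ act on $\CC^N$ with basis $(\delta_i)$ indexed by the cosets, via $\pi_\chi(g)\delta_i = \chi\big(g_{\sigma_g(i)}^{-1} g g_i\big)\,\delta_{\sigma_g(i)}$, where $g g_i \in g_{\sigma_g(i)} H$ and $\sigma_g \in S_N$ does not depend on $\chi$. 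Since $\chi \mapsto \chi(h_0)$ is continuous on $\widehat H$ for each fixed $h_0 \in H$, the map $g \mapsto \big(\chi \mapsto \pi_\chi(g)\big)$ is a homomorphism from $G$ into the unitary group of $C(\widehat H, \matrM{N})$, hence extends to a $*$-homomorphism $\widetilde\alpha \colon C^*(G) \to C(\widehat H, \matrM{N})$. Identifying $C^*(H) \cong C(\widehat H)$ by the Fourier transform, one checks directly that $\widetilde\alpha$ carries $F \in C(\widehat H)$ to the diagonal-valued function $\chi \mapsto \diag_i F(\bar g_i \cdot \chi)$, where $\cdot$ denotes the dual $Q$-action.

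Next I would choose the cube. Let $Q_0 \le Q$ be the kernel of the action of $Q$ on $\widehat H$. Each $q \in Q \setminus Q_0$ is a nontrivial automorphism of $\TT^m$ of finite order, so its set of fixed points is a proper, nowhere dense closed subset of $\TT^m$; consequently there is a point $x_0 \in \widehat H$ whose $Q$-stabiliser equals exactly $Q_0$, and then the finitely many points $q \cdot x_0$ ($qQ_0 \in Q/Q_0$) are pairwise distinct. Using continuity of the action I can choose a closed cube $Y \ni x_0$ that is small enough to be homeomorphic to $\II^m$ and to satisfy $q \cdot Y \cap Y = \emptyset$ for every $q \in Q \setminus Q_0$. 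I then set $\alpha := \theta \circ (\,\cdot\,|_Y) \circ \widetilde\alpha$, where $(\,\cdot\,|_Y)\colon C(\widehat H, \matrM{N}) \to C(Y, \matrM{N})$ is restriction and $\theta \colon C(Y, \matrM{N}) \xrightarrow{\cong} C(\II^m, \matrM{N})$ is induced by a homeomorphism $\II^m \cong Y$; this is the required $*$-homomorphism into $C(\II^m, \matrM{N})$.

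It remains to see that $C(\II^m, \CC\myu_N) \subseteq \alpha(C^*(H))$. Given $\psi \in C(Y)$, note that the compact sets $Z_i := \bar g_i \cdot Y$ depend only on the coset $\bar g_i Q_0$: two of them coincide precisely when $\bar g_i Q_0 = \bar g_j Q_0$ (because $Q_0$ acts trivially on all of $\widehat H$), and are disjoint otherwise by the choice of $Y$. On the union of this finite family of pairwise disjoint compact sets define $F(\chi') := \psi(\bar g_i^{-1} \cdot \chi')$ for $\chi' \in Z_i$; this is unambiguous (again because $Q_0$ acts trivially) and well-defined since $\bar g_i^{-1} \cdot \chi' \in Y$. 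By the Tietze extension theorem $F$ extends to a function $F \in C(\widehat H) = C^*(H)$, and for $\chi \in Y$ we get $\widetilde\alpha(F)(\chi) = \diag_i F(\bar g_i \cdot \chi) = \diag_i \psi(\chi) = \psi(\chi)\,\myu_N$. Applying $\theta$ shows every scalar-valued function lies in $\alpha(C^*(H))$, which completes the argument.

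The only step with genuine content is the choice of $Y$: that a finite group of torus automorphisms admits arbitrarily small cubes disjoint from all of their nontrivial translates. This rests on the fact that a nontrivial finite-order automorphism of $\TT^m$ has a nowhere dense fixed-point set — its fixed set is $\{x : (A-I)x \in \ZZ^m\}$ for the corresponding $A \in GL_m(\ZZ)$, a proper union of subgroup cosets — so finitely many such sets cannot cover a neighbourhood, leaving room to place the cube. Everything else (norm-continuity of $\chi \mapsto \pi_\chi(g)$, the universal property of $C^*(G)$, the Tietze extension) is routine.
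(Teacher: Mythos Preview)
Your argument is correct and follows the same overall architecture as the paper: build a continuous family of induced representations $\chi\mapsto\operatorname{Ind}_H^G\chi$ to obtain a $*$-homomorphism $C^*(G)\to C(\widehat H,\matrM{N})$, then restrict to a small cube in $\widehat H\cong\TT^m$ chosen so that the $Q$-orbit of the cube consists of disjoint (or equal) pieces. The paper realises this by picking a base point with rationally independent coordinates and controlling the radius by explicit inequalities; you do the equivalent thing more dynamically, by picking $x_0$ outside the (nowhere-dense) fixed loci of the nontrivial $q\in Q/Q_0$ and shrinking $Y$ until $q\cdot Y\cap Y=\emptyset$.

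The only genuine divergence is the final step. The paper uses Stone--Weierstrass to prove the stronger statement that $\alpha(C^*(H))$ equals the entire algebra $\mathcal A$ of block-constant diagonal functions, and then observes that $C(\II^m,\CC\,\myu_N)\subseteq\mathcal A$. You instead construct, for a given scalar function $\psi$, an explicit preimage $F\in C(\widehat H)$ by prescribing $F$ on the disjoint translates $\bar g_i\cdot Y$ and invoking Tietze. Your route is shorter and more conceptual for the statement actually needed; the paper's route yields a sharper description of the image, which is not used later. Either way the essential content---that one can place a cube whose $Q$-translates are pairwise disjoint modulo the trivially acting part---is the same.
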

\begin{proof}
If $G$ is abelian, then it is the product of $H$ and a cyclic group of cardinality $N$, so $C^*(G) = C(\mathbb T^m) \otimes \mathbb C^N$ and the statement follows trivially.
So below we assume that $G$ is non-abelian.

Since $H \cong \ZZ^m$, we can denote its elements as $\vec n = (n_1, n_2, \ldots,n_m)$.
Similarly, points of the $m$-dimensional torus $\mathbb T^m$ will be denoted by $\vec t = (t_1, t_2, \ldots, t_m)$.
(We will identify $\TT$ with the unit interval mod $1$.)
For any $\vec t \in \TT^m$ we can define a character $\chi_{\vec t}$ of $H$ by
\[
	\chi_{\vec t}(\vec n) = e^{2\pi i \langle \vec t, \vec n\rangle}.
\]
Let $\operatorname{ind} \chi_{\vec t}$ be the $N$-dimensional representation of $G$ induced from the character $\chi_{\vec t}$.
That is, let $g_1=e, g_2, \ldots, g_N$ be all the elements of $G/H$.
Let $\xi_{g_i}$, $i=1,2, \ldots, N$, be an orthonormal basis in $\CC^N$.
Since $H$ is a normal subgroup, for each $\vec n \in H$ and each $g_i$ there is $\vec n'\in H$ such that
\[
	\vec n g_i = g_i \vec n'.
\]
By definition the induced representation then acts as
\[
	(\ind \chi_{\vec t})(\vec n) \xi_{g_i} = \chi_{\vec t}(\vec n')\xi_{g_i}.
\]
For each $i= 1, 2, \ldots, N$, the automorphism
\[
	\vec n \mapsto g_i^{-1}\vec ng_i, \quad \vec n \in H
\]
of $H$ is given by a matrix $A_i \in GL_m(\ZZ)$.
Moreover $A_i = A_j$ if and only if $g_i^{-1}\vec ng_i = g_j^{-1}\vec ng_j$ for all $\vec n\in H$ if and only if $g_jg_i^{-1}\in C(H)$, where $C(H)$ is the centralizer  of $H$.
Thus the restriction of $\operatorname{ind} \chi_{\vec t}$ onto $H$ is a direct sum of 1-dimensional representations
\begin{equation} \label{RestrictionOnH}
 (\ind \chi_{\vec t}) \vert_H = \diag(\overbrace{\chi_{\vec t} \circ A_1, \ldots, \chi_{\vec t} \circ A_1}^{k_1}, \ldots, \overbrace{\chi_{\vec t} \circ A_l, \ldots,  \chi_{\vec t} \circ A_l}^{k_l}).
\end{equation}

\noindent where $A_1, A_2, \ldots, A_l \in GL_m(\ZZ)$ are pairwise distinct, each $A_i$ is repeated $k_i$ times, $k_1+ \cdots + k_l = N$, and $A_1 = \myu$.
Since we assume that $G$ is non-abelian, we also have $l>1$.

We are now going to find a $*$-homomorphism $\alpha \colon C^*(G) \to C(\II^m, \matrM{N})$ as in the statement of the theorem.

Let $t_1^{(0)}, t_2^{0}, \ldots, t_m^{(0)} \in (0, 1)$ be such that $t_1^{(0)}, t_2^{(0)}, \ldots, t_m^{(0)}, 1$ are linearly independent over $\QQ$.
Let
\[
	\vec t^{(0)} = (t_1^{(0)}, t_2^{(0)}, \ldots, t_m^{(0)}) \in \TT^m.
\]
This will be the center of our ball.

For each pair $i \neq j$, we can find $\vec n^{(i, j)}\in H$ such that
\begin{equation}\label{1.1}
	(A_i-A_j) \vec n^{(i, j)} \neq 0,
\end{equation}
since distinct matrices must differ on vectors with integer coordinates.
Then for any integer $k$, $ \langle \vec t^{(0)}, (A_i-A_j)\vec n^{(i, j)}\rangle - k $ is a non-trivial linear combination of $t_1^{(0)}, t_2^{(0)}, \ldots, t_m^{(0)}, 1$ with integer coefficients.
Hence
\[
	\langle \vec t^{(0)}, (A_i-A_j)\vec n^{(i, j)}\rangle - k \neq 0
\]
and
\begin{equation}\label{1.2}
	\langle \vec t^{(0)}, (A_i-A_j)\vec n^{(i, j)}\rangle \neq 0 \mod 1.
\end{equation}
Since there are only finitely many $i$'s and $j$'s, it follows from (\ref{1.2}) that there exists $\delta >0$ such that for any $i\neq j$ and any integer $k$
\begin{equation}\label{1.9}
|\langle \vec t^{(0)}, (A_i-A_j)\vec n^{(i, j)}\rangle - k| > \delta.
\end{equation}
Choose a positive number $R$ such that
\begin{equation}\label{1.10}
	R \leq \min \left\{{\frac{\delta}{4 \max_{i\neq j} \|\vec n^{(i,j)}\| \max_{i} \|A_i\| }, \frac{1}{4\max_i \|A_i\| }} \right\},
\end{equation}
and such that the ball $B = B(\vec t^{(0)}, R)$ does not intersect the boundary $[0, 1]^m$.
Thus we can view $B$ as a subset of $\TT^m$.
Let $r \colon C(\TT^m) \to C(\II^m)$ be the restriction map onto $B$ composed with a homomorphism of $B$ and $\II^m$.
Define a $*$-homomorphism $\beta \colon C^*(G) \to C(\TT^m, \matrM{N})$ by
\[
	\beta(g)(\vec t) = (\operatorname{ind} \chi_{\vec t})(g).
\]
Define a $*$-homomorphism $\alpha \colon C^*(G) \to C(\II^m) \otimes \matrM{N}$ by
\[
	\alpha = (r\otimes_{\operatorname{id}_{\matrM{N}}})\circ \beta.
\]
Define $\mathcal A \subseteq C(\II^m) \otimes \matrM{N}$ by

\[
	\mathcal A = \setof{ \diag ( \overbrace{\psi_1, \ldots, \psi_1}^{k_1}, \ldots, \overbrace{\psi_l, \ldots, \psi_l )}^{k_l}}{
	\begin{array}{c} \psi_i \in C(\II^m), \\ \text{ each }  \psi_i  \text{ is repeated }  k_i  \text{ times} \end{array}
	}
\]

{\bf Claim:} $\alpha(C^*(H)) = \mathcal A.$

{\bf Proof of Claim:} It follows from (\ref{RestrictionOnH}) that $\alpha(C^*(H)) \subseteq \mathcal A.$ Since $\mathcal A$ is a commutative $C^*$-algebra isomorphic to $C(\II^m \sqcup \ldots \sqcup \II^m)$ (disjoint product of $l$ copies of $\II^m$), we can apply Stone-Weierstrass theorem to prove the claim. It means to prove two statements:

1) For any $i$ and any $\vec t \neq \vec t'\in \II^m$ there exists $\vec n \in H$ such that
\[
	\left(\alpha(\vec n)(\vec t)\right)_{ii} \neq \left(\alpha(\vec n)(\vec t')\right)_{ii}
\]
(this corresponds to separating points $\vec t \neq \vec t' $ inside the i-th copy of $\II^m$).

2) For any $\vec t, \vec t'\in \II^m$ and any $i\neq j$ there exists $\vec n\in H$ such that
\[
	\left(\alpha(\vec n)(\vec t)\right)_{ii} \neq \left(\alpha(\vec n)(\vec t')\right)_{jj}
\]
(this corresponds to separating points $\vec t,  \vec t' $ inside two different copies of $\II^m$).

Proof of 1):  Since $\vec t \neq \vec t'$ and $A_i$ is invertible, $A_i^*(\vec t- \vec t')\neq 0$ and hence there is $\vec n$ with one coordinate being equal to 1 and others being zero such that
\begin{equation}\label{1.4} \langle \vec t - \vec t', A_i \vec n\rangle = \langle A_i^*(\vec t - \vec t'),  \vec n\rangle \neq 0. \end{equation}
Also \begin{equation}\label{1.5}|\langle \vec t - \vec t', A_i \vec n\rangle| \le \|\vec t-\vec t'\| \|A_i\| < 1\end{equation} by (\ref{1.10}). It follows from (\ref{1.4}) and (\ref{1.5})
that
\[
	\langle \vec t - \vec t', A_i \vec n\rangle \neq 0 \mod 1.
\]
 Hence
\begin{align*}
 |\left(\alpha(\vec n)(\vec t)\right)_{ii} - \left(\alpha(\vec n)(\vec t')\right)_{ii}| &= |\chi_{\vec t}(A_i\vec n) - \chi_{\vec t'}(A_i\vec n)| \\
 &= |e^{2\pi i \langle \vec t, A_i \vec n\rangle} - e^{2\pi i \langle \vec t', A_i \vec n\rangle} | \\
 &= | e^{2\pi i \langle \vec t - \vec t', A_i\vec n\rangle} - 1| \neq 0.
\end{align*}

Proof of 2): Let $\vec n^{(i, j)}$ be as in (\ref{1.1}). Then for any integer $k$
\begin{align*}
|\langle \vec t, A_i &\vec n^{(i, j)}\rangle - \langle \vec t', A_j \vec n^{(i, j)}\rangle -k| \\
&=|\langle \vec t - \vec t', A_i \vec n^{(i, j)} \rangle - \langle \vec t'-\vec t, A_j \vec n^{(i, j)}\rangle + \langle \vec t^{(0)}, A_i \vec n^{(i, j)} - A_j \vec n^{(i, j)}\rangle - k|  \\
&\ge |\langle \vec t^{(0)}, A_i \vec n^{(i, j)} - A_j \vec n^{(i, j)}\rangle - k| - 2 r \max_i\|A_i\|\max_{i\neq j}|\vec n^{(i,j)}| \\
&\ge \delta - \delta/2 > 0
\end{align*}
by (\ref{1.9}) and (\ref{1.10}). Thus
\[
	\langle \vec t, A_i \vec n^{(i, j)}\rangle - \langle \vec t', A_j \vec n^{(i, j)}\rangle \neq 0 \mod 1
\]
and we obtain
\begin{align*}
	|\left(\alpha(\vec n)(\vec t)\right)_{ii} - \left(\alpha(\vec n)(\vec t')\right)_{jj}| &= |\chi_{\vec t}(A_i\vec n^{(i, j)}) - \chi_{\vec t'}(A_j\vec n^{(i, j)})| \\
	&= |e^{2\pi i \langle \vec t, A_i\vec n^{(i, j)}\rangle} - e^{2\pi i \langle \vec t', A_j\vec n^{(i, j)}\rangle}| \\
	& = |e^{2\pi i(\langle \vec t, A_i \vec n^{(i, j)}\rangle - \langle \vec t', A_j \vec n^{(i, j)}\rangle)} - 1| \neq 0.
\end{align*}
This proves the claim.

\noindent Since $C(\II^m, \mathbb C \myu_{\matrM{N})} \subseteq \mathcal A$, the statement of the theorem follows from the claim.
\end{proof}

\begin{lemma}\label{tensor} Let $\Omega_n$, $S_n$ be the Voiculescu matrices and let $F$ be a finite-dimensional $C^*$-algebra. Then there are no commuting matrices $A_n, B_n\in \matrM{n}\otimes F$ such that
\[
	\|A_n - \Omega_n \otimes \myu_F\|\to 0, \|B_n - S_n \otimes \myu_F\|\to 0
\]
as $n\to \infty$.
\end{lemma}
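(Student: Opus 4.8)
The plan is to reduce the statement to the classical fact about the Voiculescu matrices, namely that $S_n$ and $\Omega_n$ are asymptotically commuting but not uniformly close to exactly commuting unitaries (as detected by the Exel--Loring winding number). The key observation is that tensoring with a fixed finite-dimensional algebra $F$ cannot destroy this obstruction, because the winding number of a tensor product of a unitary path with the identity on $F$ is just a multiple (by $\dim F$) of the original winding number.

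First I would suppose, for contradiction, that commuting matrices $A_n, B_n \in \matrM{n}\otimes F$ exist with $\|A_n - \Omega_n\otimes\myu_F\|\to 0$ and $\|B_n - S_n\otimes\myu_F\|\to 0$. Since $\Omega_n\otimes\myu_F$ and $S_n\otimes\myu_F$ are unitaries, for $n$ large the elements $A_n, B_n$ are close to unitaries, and after passing to their unitary parts (via functional calculus / polar decomposition, which moves them by an amount tending to $0$ and preserves that they commute up to an error tending to $0$ — actually one can keep them exactly commuting by a standard argument, or simply absorb the small perturbation into the final estimate) we may assume $A_n, B_n$ are commuting unitaries in $\matrM{nk}$ where $k = \dim F$ (choosing a faithful unital representation $F\hookrightarrow \matrM{k}$, so $\matrM{n}\otimes F \subseteq \matrM{nk}$). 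Then $A_n B_n A_n^{-1} B_n^{-1} = \myu_{nk}$, so the relation $R(x,y) = xyx^{-1}y^{-1}$ is exactly satisfied by $(A_n, B_n)$, while it is satisfied by $(\Omega_n\otimes\myu_F, S_n\otimes\myu_F)$ only approximately, with $\|R(\Omega_n\otimes\myu_F, S_n\otimes\myu_F) - \myu_{nk}\| = \|[\Omega_n,S_n]\| \to 0 < 1/2$ for large $n$.

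Next I would invoke Theorem \ref{exelloring} with $N=2$, $R(x,y)=xyx^{-1}y^{-1}$ (homogeneous, with $L(R)=4$): once $\|A_n - \Omega_n\otimes\myu_F\| < \frac{1}{8}$ and $\|B_n - S_n\otimes\myu_F\| < \frac{1}{8}$, the theorem forces $\wind \gamma^{\Omega_n\otimes\myu_F,\, S_n\otimes\myu_F} = 0$. But $R(\Omega_n\otimes\myu_F, S_n\otimes\myu_F) = (\omega_n \myu_n)\otimes\myu_F$ (since $[\Omega_n,S_n] = \omega_n\myu_n$ up to the standard identity for the Voiculescu pair), so $\gamma^{\Omega_n\otimes\myu_F, S_n\otimes\myu_F}(r) = \det\big((r\omega_n + (1-r))\myu_{nk}\big) = (r\omega_n + 1-r)^{nk}$, whose winding number as $r$ runs over $[0,1]$ is $k$ (the segment from $1$ to $\omega_n$ turns by $1/n$ of a full circle, raised to the power $nk$). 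This is nonzero, contradicting the conclusion of Theorem \ref{exelloring}, and the lemma follows.

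The main obstacle is the bookkeeping in the first reduction step: Theorem \ref{exelloring} is stated for \emph{unitary} matrices $A_i$ satisfying the relation \emph{exactly}, whereas a priori $A_n, B_n$ are only approximately unitary and only commute because we assumed so. The cleanest route is to note that $\matrM{n}\otimes F$ embeds unitally in $\matrM{nk}$, replace $A_n, B_n$ by the unitaries in their polar decompositions, check that these still commute (the polar parts of commuting operators commute) and still converge to $\Omega_n\otimes\myu_F$ and $S_n\otimes\myu_F$, so the hypotheses of Theorem \ref{exelloring} are met verbatim. One should also double-check the normalization of the winding number so that the constant curve has winding number $0$ while $(r\omega_n+1-r)^{nk}$ has winding number $k\neq 0$; this is exactly the Exel--Loring computation from \cite{el:acum}, now scaled by the harmless factor $k=\dim F$.
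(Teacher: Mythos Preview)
Your approach coincides with the paper's: both observe that the Exel--Loring winding-number obstruction for $(\Omega_n, S_n)$ survives tensoring with $\myu_F$ (the winding number simply scales by $k=\dim F$). The paper states this in one line (``their proof works for $(\Omega_n\otimes\myu_F, S_n\otimes\myu_F)$ as well, even with the same constant $d$''), while you unpack it via Theorem~\ref{exelloring}; your computation of the winding number as $k\neq 0$ is correct.

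One technical point deserves correction. The assertion that ``the polar parts of commuting operators commute'' is false in general: if $A$ and $B$ commute there is no reason for $B$ to commute with $A^*A$, hence none for $B$ to commute with $|A|$ or with $U_A=A|A|^{-1}$. Fortunately the polar-decomposition step is unnecessary. A glance at the proof of Theorem~\ref{exelloring} shows that unitarity of the $A_i$ is never actually used --- only their invertibility and closeness to the unitaries $X_i$ are needed to run the homotopy and keep the curves away from $0$. Since your $A_n,B_n$ are invertible for large $n$ and satisfy $R(A_n,B_n)=\myu$ exactly, you can apply the homotopy argument directly to them (tightening the constant $1/(2L)$ slightly if you want the estimates to be airtight). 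With that adjustment your argument is complete and matches the paper's.
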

\begin{proof} Exel and Loring proved that there is $d>0$ such that any pair of commuting matrices in $\matrM{n}$ is at distance not less than $d$ from $(\Omega_n, S_n)$. Their proof works for $(\Omega_n\otimes \myu_F, S_n\otimes \myu_F)$ as well, even with the same constant $d$.
\end{proof}

\begin{lemma}\label{Fredholm}
Let $H$ be a separable Hilbert space with basis $\{e_i\}$. Let $T_n \in \mathbf{B}(H)$, $n\in \mathbb N$, be given by
\[
	T_n e_k = \begin{cases}
       \frac{k}{2n^2}e_{k+1}, &  1\le k\le 2n^2\\
       e_{k+1}, &  k>2n^2
     \end{cases}
\]
Let $F$ be a finite-dimensional $C^*$-algebra. Then there are no normal elements $N_n\in \mathbf{B}(H)\otimes F$ such that
\[
	\|N_n - T_n\otimes \myu_F\| \to 0,
\]
as $n\to \infty$.
\end{lemma}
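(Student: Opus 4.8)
The plan is to run a Fredholm index argument and make it uniform in $n$. Since $T_n$ is a finite-rank perturbation of the unilateral shift it is Fredholm of index $-1$, whereas a normal operator that happens to be Fredholm has index $0$; so $T_n$ cannot be close to a normal operator, and the only real issue is to control how small ``$\|N_n - T_n\otimes\myu_F\|$'' must be for the contradiction to fire, uniformly in $n$ and stably under tensoring with $F$. First I would reduce to matrix algebras: as $F$ is finite-dimensional, fix a faithful \emph{unital} $*$-representation $F\hookrightarrow\matrM{d}$, so that $\mathbf{B}(H)\otimes F$ embeds unitally into $\mathbf{B}(H)\otimes\matrM{d}\cong\mathbf{B}(H\otimes\CC^{d})$, with $T_n\otimes\myu_F$ sent to $T_n\otimes\myu_d$, which under $H\otimes\CC^{d}\cong H^{\oplus d}$ is $\widehat{T}_n:=T_n^{\oplus d}$. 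A $*$-monomorphism preserves normality, so it suffices to show there is no sequence of normal operators $N_n\in\mathbf{B}(H^{\oplus d})$ with $\|N_n-\widehat{T}_n\|\to 0$.

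Next I would record the two properties of $\widehat{T}_n$ that drive the argument. Let $S$ be the unilateral shift on $H$ ($Se_k=e_{k+1}$) and $\widehat{S}:=S^{\oplus d}$. Because $w_k=1$ for $k>2n^{2}$, the operator $T_n-S$ has finite rank; hence $\widehat{T}_n-\widehat{S}$ has finite rank, so $\pi(\widehat{T}_n)=\pi(\widehat{S})$, where $\pi\colon\mathbf{B}(H^{\oplus d})\to Q$ denotes the quotient onto the Calkin algebra $Q:=\mathbf{B}(H^{\oplus d})/\mathcal{K}(H^{\oplus d})\cong\matrM{d}\!\left(\mathbf{B}(H)/\mathcal{K}(H)\right)$. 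Since $S^{*}S=\myu$ while $\myu-SS^{*}$ is the rank-one projection onto $\CC e_1$, the image of $S$ in the Calkin algebra of $H$ is a unitary; hence $\pi(\widehat{S})=\pi(\widehat{T}_n)$ is a unitary in $Q$, in particular invertible there with $\|\pi(\widehat{T}_n)^{-1}\|=1$. On the other hand $S$ is Fredholm of index $-1$, so $\ind\widehat{T}_n=\ind\widehat{S}=-d$.

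Then comes the contradiction. If $R\in\mathbf{B}(H^{\oplus d})$ is not Fredholm, then $\pi(R)$ is not invertible in $Q$, and the distance in $Q$ from a unitary to the non-invertible elements is at least $1$; hence $\|\widehat{T}_n-R\|\ge\|\pi(\widehat{T}_n)-\pi(R)\|\ge 1$. Thus $\widehat{T}_n$ lies at distance at least $1$ from the set of non-Fredholm operators. Now suppose $N_n$ is normal with $\|N_n-\widehat{T}_n\|\to 0$ and pick $n$ with $\|N_n-\widehat{T}_n\|<1$: then every point of the segment $\{\,tN_n+(1-t)\widehat{T}_n : 0\le t\le 1\,\}$ is Fredholm, and the index is locally constant there, so $\ind N_n=\ind\widehat{T}_n=-d\neq 0$. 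But a normal operator $M$ satisfies $\|M\xi\|=\|M^{*}\xi\|$ for all $\xi$, hence $\ker M=\ker M^{*}$, so every Fredholm normal operator has index $0$; applied to $M=N_n$ this is the desired contradiction.

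The genuinely delicate point is the uniformity, and it is precisely where a naive approach stumbles: one might fear that the distance from $T_n$ to the non-Fredholm operators degenerates as $n\to\infty$ (after all, a perturbation of norm $\tfrac1{2n^{2}}$ already destroys the injectivity of $T_n$), or that a nearby normal $N_n$ could have spectrum filling the whole closed unit disk, leaving no spectral parameter to perturb against. Both worries dissolve once one does not perturb any spectral parameter at all and instead uses that $T_n$ is a \emph{finite-rank} perturbation of the shift: its image in the Calkin algebra is an honest unitary, whose inverse has norm exactly $1$ \emph{independently of $n$}, which pins the Fredholm radius of $\widehat{T}_n$ at $1$; and this is manifestly stable under $\otimes F$, since a finite-rank operator stays finite rank after tensoring with $\myu_F$ and the Calkin quotient commutes with the finite-dimensional tensor factor. (The same computation in fact gives that the distance from $T_n$ to the set of normal operators equals $1$ for every $n$.)
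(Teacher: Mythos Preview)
Your proof is correct and follows essentially the same approach as the paper: both observe that $T_n$ is a finite-rank perturbation of the unilateral shift, pass to the Calkin algebra (the paper via the essential norm, you via $\pi$), and obtain a contradiction between $\ind(N_n)=\ind(S^{\oplus d})\neq 0$ and the fact that normal Fredholm operators have index zero. Your treatment is somewhat more explicit about the uniformity in $n$ (quantifying the Fredholm radius as $1$ via the unitarity of $\pi(\widehat{T}_n)$), but the argument is the same.
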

\begin{proof}
Write $F$ as
\[
	F = \matrM{k_1} \oplus \cdots\oplus \mathbf{M}_{k_N}
\]
and let $k = k_1 + \cdots + k_N$. We can consider elements of $\mathbf{B}(H) \otimes F$ as operators on $\bigoplus_{i=1}^k H$.
Suppose there are normal elements $N_n\in \mathbf{B}(H)\otimes F$ such that \[
	\|N_n - T_n\otimes \myu_F\| \to 0
\]
as $n\to \infty$. Since each $T_n$, $n\in \mathbb N$, is a finite rank  perturbation of the unilateral shift $T$, this would imply that
\[
	\|N_n - T\otimes \myu_F\|_e \to 0
\]
as $n\to \infty$ (here $\| \|_e$ is the essential norm). Since the Fredholm index does not change under small perturbations, this would imply that for $n$ large enough, $N_n$ is Fredholm and $\ind(N_n) = \ind (T\otimes \myu_F) = - k.$ However, the Fredholm index of any normal Fredholm operator is zero.
\end{proof}

\begin{theorem}\label{VirtuallyAbelianMWSP}  Let $G$ be a finitely generated virtually abelian group of rank  $m$,  where $m\ge 3$. Then $G$ is not matricially stable.
\end{theorem}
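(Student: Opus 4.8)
The plan is to show directly that $C^*(G)$ is not matricially weakly semiprojective — equivalently, that $G$ is not matricially stable — by producing a $*$-homomorphism into a quotient $\prod_n F_n/\bigoplus_n F_n$ of finite-dimensional $C^*$-algebras that admits no lift to $\prod_n F_n$. First I would dispose of the abelian case: if $G$ is abelian of rank $m\ge 2$, then $C^*(G)\cong C(\TT^m)\otimes\CC^N$, and sending the first two coordinate unitaries to the classes of the Voiculescu matrices $(\Omega_n)_n$ and $(S_n)_n$ (which commute in the quotient, since $\Omega_n$ and $S_n$ almost commute), the remaining coordinate unitaries to $1$, and then projecting onto a single summand of $\CC^N$, gives a $*$-homomorphism $C^*(G)\to\prod_n\matrM n/\bigoplus_n\matrM n$ that does not lift by Lemma \ref{tensor} with $F=\CC$. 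So from now on assume $G$ is non-abelian; in the notation of Theorem \ref{MapToScalarFunctions} this means $N\ge 2$ and the conjugation matrices $A_1=\myu,A_2,\dots,A_N\in GL_m(\ZZ)$ are not all equal.

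Next I would fix $\alpha\colon C^*(G)\to C(\II^m,\matrM N)$ as in Theorem \ref{MapToScalarFunctions}, so $\alpha(C^*(H))\supseteq C(\II^m,\CC\myu_N)$, and assume toward a contradiction that $C^*(G)$ is matricially weakly semiprojective. The heart of the argument is to build, from the induced-representation picture underlying $\alpha$ together with the Voiculescu matrices, a $*$-homomorphism $\Phi\colon C^*(G)\to\prod_n F_n/\bigoplus_n F_n$, with $F_n=\matrM{l_n}\otimes F$ for a fixed finite-dimensional $F$ recording the coset structure, such that after restricting to $C^*(H)\cong C(\TT^m)$ there are two commuting elements $u,v\in H$ with $\Phi(u)=[(\Omega_{l_n}\otimes\myu_F)_n]$ and $\Phi(v)=[(S_{l_n}\otimes\myu_F)_n]$ (these commute in the quotient, as they must since $u,v$ commute, because $\Omega_{l_n}$ and $S_{l_n}$ almost commute). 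The idea is to keep $\alpha$'s description of $H$ as acting by characters on $\CC^N$, but to replace the characters along a suitable rank-$2$ sublattice of $H$ by the Voiculescu matrices, using the remaining direction(s) of $H$ — here $m\ge 3$ enters — and the coset permutation data (the factor $\matrM N$, i.e.\ $F$) to absorb the resulting failure of exactness. One then checks that $\Phi$ respects all the defining relations of $G$ modulo $\bigoplus_n F_n$: the commuting relations inside $H$ hold because $\Omega_{l_n}$ and $S_{l_n}$ commute asymptotically, and the relations $g_i^{-1}\vec n g_i=A_i\vec n$ hold because they are linear in the exponents and are built into the construction. Granting this, matricial weak semiprojectivity supplies a lift $\Psi\colon C^*(G)\to\prod_n F_n$; writing $\Psi(u)=(A_n)_n$ and $\Psi(v)=(B_n)_n$, the matrices $A_n,B_n$ are unitaries that commute \emph{exactly} for every $n$ (since $u$ and $v$ commute in $G$), while $\rho\circ\Psi=\Phi$ forces $\|A_n-\Omega_{l_n}\otimes\myu_F\|\to 0$ and $\|B_n-S_{l_n}\otimes\myu_F\|\to 0$. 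This contradicts Lemma \ref{tensor}, so $\Phi$ has no lift, $C^*(G)$ is not matricially weakly semiprojective, and $G$ is not matricially stable.

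The step I expect to be the main obstacle is the construction of $\Phi$ and the verification that it descends to an honest $*$-homomorphism on all of $C^*(G)$: one must insert the Voiculescu matrices into the family $\{\ind\chi_{\vec t}\}$ so as to remain simultaneously compatible with the finite coset data and with every relation of $G$, while still leaving a Voiculescu pair $(\Phi(u),\Phi(v))$ detectable after lifting — this is exactly why Lemma \ref{tensor} is needed in its tensored form $\Omega_n\otimes\myu_F$, $S_n\otimes\myu_F$ rather than for the bare Voiculescu matrices. It is also precisely where the hypothesis $m\ge 3$, rather than merely $m\ge 2$, is essential: with only two abelian directions available, once the finite part of $G$ has been taken into account there is no room to perform this substitution and keep the obstruction alive — consistent with the fact, proved elsewhere in the paper, that certain rank-$2$ virtually abelian groups, unlike $\ZZ^2$ itself, are matricially stable.
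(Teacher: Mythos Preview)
Your outline is on the right track in spirit --- use Theorem~\ref{MapToScalarFunctions}, the Voiculescu matrices, and Lemma~\ref{tensor} --- but the construction of $\Phi$ that you flag as ``the main obstacle'' is exactly the gap, and you do not close it. Building $\Phi$ directly on $C^*(G)$ by ``inserting the Voiculescu matrices into the family $\{\ind\chi_{\vec t}\}$'' requires verifying all conjugation relations $g\vec n g^{-1}=A_g\vec n$ after the substitution. Since the $A_g\in GL_m(\ZZ)$ typically mix all $m$ directions, a Voiculescu pair placed along two coordinates gets spread over all coordinates under conjugation, and you give no mechanism for why the resulting relations hold in the quotient. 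Your heuristic about using ``the remaining direction(s)'' to absorb the discrepancy is not turned into an argument.

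The paper's proof sidesteps this obstacle completely by \emph{factoring} rather than constructing $\Phi$ directly. One composes the genuine $*$-homomorphism $\alpha\colon C^*(G)\to C(\II^m,\matrM{N})$ of Theorem~\ref{MapToScalarFunctions} with the restriction to an embedded $\TT^2\subset\II^m$, obtaining $\tilde\alpha\colon C^*(G)\to C(\TT^2,\matrM{N})$, and then with $\tilde\pi\otimes\mathrm{id}_{\matrM{N}}$, where $\tilde\pi\colon C(\TT^2)\to\prod\matrM{n}/\bigoplus\matrM{n}$ is the standard map sending the coordinate unitaries to the classes of $\Omega_n$ and $S_n$. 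Every group relation is already encoded in $\alpha$, so there is nothing further to verify. The hypothesis $m\ge 3$ enters \emph{purely topologically}: $\TT^2$ embeds in $\II^m$ if and only if $m\ge 3$. This is a much cleaner explanation than your ``room to absorb'' heuristic.

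Two further points. First, the elements $a,b$ that map to $e^{2\pi i x}\otimes\myu_N$, $e^{2\pi i y}\otimes\myu_N$ are elements of $C^*(H)$, not group elements of $H$: they are preimages under $\tilde\alpha$ of the coordinate unitaries of $C(\TT^2,\CC\myu_N)$, and one uses only that $\tilde\alpha(C^*(H))\supseteq C(\TT^2,\CC\myu_N)$ and that $C^*(H)$ is commutative. Second, your separate treatment of the abelian case is unnecessary: Theorem~\ref{MapToScalarFunctions} already covers it, and the factorization above works uniformly.
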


\begin{proof} At first we notice that $G$ must contain $\mathbb Z^m$ as a normal subgroup of finite index. Indeed $G$ contains an abelian subgroup $\widetilde H \cong \mathbb Z^m$ of finite index.
Then $H = \bigcap_{g\in G}g\widetilde H g^{-1}$ is normal abelian subgroup of $G$ of finite index. Hence its rank  is $m$. On the other hand $H$ is a subgroup of $\widetilde H \cong \mathbb Z^m$ and hence is isomorphic to $\mathbb Z^k$, for some $k$. Hence $k=m$.

Let $N$ be the index of $H$.
By Theorem \ref{MapToScalarFunctions} there is a $*$-homomorphism
$\alpha\colon C^*(G) \to C(\II^m, \matrM{N})$ such that
\[
	\alpha(C^*(H))\supseteq C(\II^m, \mathbb C \myu_N).
\]
Since $m\ge 3$, $\II^m$ contains a homeomorphic copy of two-dimensional torus which we will denote by $\mathbb T^2$.  Let $\tilde \alpha\colon C^*(G) \to C(\mathbb T^2, \matrM{N})$ be the composition
of $\alpha$ with the restriction map $ C(\II^m, \matrM{N}) \to C(\mathbb T^2, \matrM{N}).$
Then
\begin{equation}\label{surjective}
	\tilde \alpha(C^*(H)) \supseteq C(\mathbb T^2, \mathbb C \myu_N).
\end{equation}

 Let $e^{2 \pi i x}$ and $e^{2 \pi i y}$ be the standard generators of $C(\mathbb T^2)$ and let $\tilde \pi \colon C(\mathbb T^2) \to \prod \matrM{n} / \bigoplus \matrM{n}$ be given by
\[
	\tilde\pi(e^{2 \pi i x}) = q\left((\Omega_n)_{n\in \mathbb N}\right), \tilde\pi(e^{2 \pi iy}) = q\left((S_n)_{n\in \mathbb N}\right),
\]
where $q \colon \prod \matrM{n} \to \prod \matrM{n} /\bigoplus \matrM{n}$ is the canonical surjection and $\Omega_n, S_n$ are the Voiculescu matrices.  Let
\[
	\pi = \tilde \pi \otimes \operatorname{id}_{\matrM{N}} \colon C(\mathbb T^2, \matrM{N}) \to \prod (\matrM{n}\otimes \matrM{N}) \left/ \bigoplus (\matrM{n}\otimes \matrM{N}) \right..
\]

Suppose $G$ is matricially stable. Then $\pi\circ \tilde \alpha$ lifts to some $*$-homomorphism $\psi = (\psi_n)_{n\in \mathbb N}$, where $\psi_n\colon C^*(G) \to \matrM{n} \otimes \matrM{N}$.
By (\ref{surjective}) there are $a, b\in C^*(H)$  such that
\[
	\alpha(a) = e^{2 \pi i x}\otimes \myu_N, \quad \text{ and } \quad  \alpha(b) = e^{2 \pi i y}\otimes \myu_N.
\]
 It implies that
\begin{align*}
	\|\psi_n(a) - \Omega_n\otimes \myu_N\| \to 0, \\
	\|\psi_n(b) - S_n\otimes \myu_N\| \to 0.
\end{align*}
Since $C^*(H)$ is a commutative $C^*$-subalgebra of $C^*(G)$, $\psi_n(a)$ and  $\psi_n(b)$ commute, for each $n\in \mathbb N$. This contradicts  Lemma \ref{tensor}.
\end{proof}

\begin{theorem}\label{VirtAbWSP} Let $G$ be a finitely generated virtually abelian group. The following are equivalent:
\begin{enumerate}[(i)]
\item $G$  is $C^*$-stable

\item $G$ is weakly $C^*$-stable

\item  $G$ has rank  less or equal to one.
\end{enumerate}
\end{theorem}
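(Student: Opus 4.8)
The plan is to prove (i)$\Rightarrow$(ii)$\Rightarrow$(iii)$\Rightarrow$(i). The implication (i)$\Rightarrow$(ii) is immediate, since a \SP\ group is \WSP. For (iii)$\Rightarrow$(i): if $G$ is finite then $C^*(G)$ is finite-dimensional, hence semiprojective; and if $\rank G = 1$ then $G$ has a finite-index subgroup isomorphic to $\ZZ$, which is free, so $G$ is virtually free and therefore \SP\ by Theorem \ref{vfissp}. The whole content is thus (ii)$\Rightarrow$(iii), which I will prove in contrapositive form: if $\rank G = m \ge 2$ then $G$ is not weakly $C^*$-stable. For $m \ge 3$ this is already contained in Theorem \ref{VirtuallyAbelianMWSP}, since weak $C^*$-stability implies matricial stability; so the only genuinely new case is $m = 2$, and this is where Lemma \ref{Fredholm} enters.

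Assume $\rank G = 2$. As in the proof of Theorem \ref{VirtuallyAbelianMWSP}, $G$ contains a normal subgroup $H \cong \ZZ^2$ of finite index $N$, and Theorem \ref{MapToScalarFunctions} supplies a $*$-homomorphism $\alpha \colon C^*(G) \to C(\II^2, \matrM N)$ with $\alpha(C^*(H)) \supseteq C(\II^2, \CC\myu_N)$. The strategy is to re-run the obstruction of Theorem \ref{VirtuallyAbelianMWSP} with the operators $T_n$ of Lemma \ref{Fredholm} in place of the Voiculescu matrices, exploiting that they are almost normal but, by the Fredholm-index argument, not close to normal operators even after tensoring with a finite-dimensional algebra. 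Since the weights of $T_n$ are eventually equal to $1$, a direct computation gives $\|T_n^* T_n - T_n T_n^*\| \to 0$; hence, writing $B_n := C^*(T_n) \otimes \matrM N \subseteq \mathbf B(H) \otimes \matrM N$ (a unital separable $C^*$-algebra) and $\mathcal Q_0 := \prod_n C^*(T_n)/\bigoplus_n C^*(T_n)$, the class $\tau_0 := q_0\big((T_n)_n\big)$ is a normal contraction in $\mathcal Q_0$. As $\II^2$ is homeomorphic to the closed unit disk $\overline{\mathbb D}$, the algebra $C(\II^2, \CC\myu_N) \cong C(\overline{\mathbb D})$ is the universal unital $C^*$-algebra generated by one normal contraction; let $\xi \in C(\II^2, \CC\myu_N) \subseteq \alpha(C^*(H))$ be its canonical generator. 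By universality there is a (unital) $*$-homomorphism $\theta_0 \colon C(\II^2, \CC\myu_N) \to \mathcal Q_0$ with $\theta_0(\xi) = \tau_0$, and amplifying by $\matrM N$ extends it to $\Theta := \theta_0 \otimes \operatorname{id}_{\matrM N} \colon C(\II^2, \matrM N) \to \mathcal Q_0 \otimes \matrM N \cong \prod_n B_n/\bigoplus_n B_n$, with $\Theta(\xi) = q\big((T_n \otimes \myu_N)_n\big)$ (here $\xi$ is viewed as a scalar-valued element of $C(\II^2, \matrM N)$).

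Now put $\Phi := \Theta \circ \alpha \colon C^*(G) \to \prod_n B_n/\bigoplus_n B_n$, and suppose towards a contradiction that $G$ is weakly $C^*$-stable, i.e.\ that $C^*(G)$ is weakly semiprojective. Then $\Phi$ lifts to a $*$-homomorphism $\Psi = (\Psi_n)_n \colon C^*(G) \to \prod_n B_n$. Pick $a \in C^*(H)$ with $\alpha(a) = \xi$; then $q\big((\Psi_n(a))_n\big) = \Phi(a) = q\big((T_n \otimes \myu_N)_n\big)$, so $\|\Psi_n(a) - T_n \otimes \myu_N\| \to 0$. On the other hand $a$ lies in the commutative subalgebra $C^*(H) \subseteq C^*(G)$, so each $\Psi_n(a)$ is a normal element of $B_n \subseteq \mathbf B(H) \otimes \matrM N$. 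This contradicts Lemma \ref{Fredholm} with $F = \matrM N$, and completes the proof of (ii)$\Rightarrow$(iii).

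The main obstacle, and the reason for the roundabout construction of $\Phi$, is that a non-liftable $*$-homomorphism has to be defined on all of $C^*(G)$ and not merely on the commutative subalgebra $C^*(H)$ where the relevant normal element sits; this forces $\Theta$ to live on the whole matrix algebra $C(\II^2, \matrM N)$, hence to be produced as an amplification $\theta_0 \otimes \operatorname{id}_{\matrM N}$, which is precisely why the $T_n$ must be placed inside a separable $C^*$-algebra in which $\tau_0$ is a genuine normal element. The remaining points are routine and should be checked: that the canonical map $C^*(H) \to C^*(G)$ is injective (so that $\Psi_n(a)$ is indeed normal), the estimate $\|T_n^* T_n - T_n T_n^*\| \to 0$, and the separability of $C^*(T_n) \otimes \matrM N$ so that Definition \ref{weaklydef} applies.
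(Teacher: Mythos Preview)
Your argument is correct and follows essentially the same route as the paper's proof: use Theorem~\ref{MapToScalarFunctions} to map $C^*(G)$ into matrix-valued functions on a cube, identify a copy of the disk, send the coordinate function to the almost-normal operators $T_n$ of Lemma~\ref{Fredholm}, and obtain a contradiction from the normality of the image of $C^*(H)$. The paper handles all $m \ge 2$ at once by restricting to a disk inside $\II^m$, whereas you treat $m \ge 3$ via Theorem~\ref{VirtuallyAbelianMWSP} and only run the disk argument for $m = 2$; and you route the target through the separable algebras $C^*(T_n)\otimes \matrM N$ rather than $\mathbf B(H)\otimes \matrM N$, which is a careful way to stay literally inside Definition~\ref{weaklydef}. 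These are cosmetic differences; the substance is the same.
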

\begin{proof}  (i)$\Rightarrow$(ii) is obvious.

(ii)$\Rightarrow$(iii): Suppose the rank  of $G$ is bigger than one. In the same way as in Theorem \ref{VirtuallyAbelianMWSP}
we conclude that $G$ contains  a normal subgroup $H\cong\mathbb Z^m$ of finite index, where $m\ge 2$.
 Let $N$ be the index of $H$.
By Theorem \ref{MapToScalarFunctions} there is a $*$-homomorphism
$\alpha\colon C^*(G) \to C(\II^m, \matrM{N})$ such that
\[
	\alpha(C^*(H))\supseteq C(\II^m, \mathbb C \myu_N).
\]
Since $m\ge 2$, $\II^m$ contains a homeomorphic copy of the two-dimensional disc which we will denote by $\mathbb D$.  Let $\tilde \alpha\colon C^*(G) \to C(\mathbb D,  \matrM{N})$ be the composition
of $\alpha$ with the restriction map $ C(\II^m, \matrM{N}) \to C(\mathbb D, \matrM{N}).$
Then
\begin{equation}\label{surjective2}\tilde \alpha(C^*(H)) \supseteq C(\mathbb D, \mathbb C \myu_N).\end{equation}

Let $T_n$ be as in Lemma \ref{Fredholm}.    It is easy to check that \begin{equation}\label{almostnormal}\|\acomm{T_n}{T_n^*}\|\to 0\end{equation} and hence we can define a $*$-homomorphism  $\tilde \pi\colon C(\mathbb D) \to \prod \mathbf{B}(H) /\bigoplus \mathbf{B}(H)$ by
\[
	\tilde\pi(z) = q\left((T_n)_{n\in \mathbb N}\right),
\]
where $z \in C(\mathbb D)$ is the identity function and  $q\colon \prod \mathbf{B}(H) \to \prod \mathbf{B}(H)/\bigoplus \mathbf{B}(H)$ is the canonical surjection.
Let
\[
	\pi = \tilde \pi \otimes \operatorname{id}_{\matrM{N}}\colon C(\mathbb D, \matrM{N}) \to \prod (\mathbf{B}(H)\otimes \matrM{N}) /\bigoplus (\mathbf{B}(H)\otimes \matrM{N}).
\]
If $G$ was weakly $C^*$-stable then $\pi\circ \tilde\alpha$ would lift to some $*$-homomorphism $\psi = (\psi_n)_{n\in \mathbb N}$, where $\psi_n\colon A \to \mathbf{B}(H) \otimes \matrM{N}$, $n\in \mathbb N$. By (\ref{surjective2}) there is $a\in C^*(H)$  such that
\[
	\tilde\alpha(a) = z\otimes \myu_N.
\]
It implies that
\[
	\|\psi_n(a) - T_n\otimes \myu_N\| \to 0.
\]
However since  $C^*(H)$ is a commutative $C^*$-subalgebra of $C^*(G)$, $\psi_n(a)$ is normal, for each $n\in \mathbb N$. This contradicts Lemma \ref{Fredholm}.

(iii) $\Rightarrow$ (i): If  the rank  of $G$ is less or equal to one, then $G$ is virtually free and hence is $C^*$-stable by Theorem \ref{vfissp}.
\end{proof}

As a corollary we obtain a characterization of stability for finitely generated abelian groups.

\begin{corollary}  Let $G$ be a finitely generated abelian group. The following are equivalent:
\begin{enumerate}[(i)]
\item $G$  is $C^*$-stable

\item $G$ is weakly $C^*$-stable

\item $G$ is matricially stable

\item  $G$ has rank  less or equal to one.
\end{enumerate}
\end{corollary}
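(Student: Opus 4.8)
The plan is to read off the equivalences (i) $\Leftrightarrow$ (ii) $\Leftrightarrow$ (iv) directly from Theorem \ref{VirtAbWSP}, since a finitely generated abelian group is in particular a finitely generated virtually abelian group (and for such groups ``rank $\le 1$'' in the sense of Theorem \ref{VirtAbWSP} agrees with the usual rank of the group). The implications (i) $\Rightarrow$ (ii) $\Rightarrow$ (iii) are immediate from the remark that semiprojective implies weakly semiprojective implies matricially weakly semiprojective. Hence the only new content is the implication (iii) $\Rightarrow$ (iv): I must show that if $G$ is finitely generated abelian of rank $m \ge 2$ then $G$ is not matricially stable, equivalently that $C^*(G)$ is not matricially weakly semiprojective.

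First I would reduce to the rank-two torus. By the structure theorem $G \cong \ZZ^m \oplus F$ with $F$ finite, and since $m \ge 2$ the composite of the obvious maps
\[
    \ZZ^2 \hookrightarrow \ZZ^m \hookrightarrow \ZZ^m \oplus F \twoheadrightarrow \ZZ^m \twoheadrightarrow \ZZ^2
\]
is the identity on $\ZZ^2$. Applying the functor $C^*(-)$ yields unital $*$-homomorphisms $\iota \colon C(\TT^2) \to C^*(G)$ and $r \colon C^*(G) \to C(\TT^2)$ with $r \circ \iota = \operatorname{id}$; that is, $C(\TT^2) = C^*(\ZZ^2)$ is a retract of $C^*(G)$.

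The key (and essentially only non-formal) step is then the elementary observation that matricial weak semiprojectivity passes to retracts: given any $*$-homomorphism $\phi \colon C(\TT^2) \to \prod_n \matrM{k_n}/\bigoplus_n \matrM{k_n}$, if $C^*(G)$ were matricially weakly semiprojective we could lift $\phi \circ r$ to $\Psi \colon C^*(G) \to \prod_n \matrM{k_n}$, and then $\Psi \circ \iota$ would be a lift of $\phi \circ r \circ \iota = \phi$. Since $C(\TT^2)$ is \emph{not} matricially weakly semiprojective --- send the two canonical unitary generators to the classes of the Voiculescu matrices $\Omega_n$ and $S_n$, which is well defined because $\|\acomm{\Omega_n}{S_n}\| \to 0$; a lift would produce commuting matrices converging to $(\Omega_n, S_n)$, contradicting Lemma \ref{tensor} with $F = \CC$ (equivalently, the Exel--Loring estimate recalled in Section \ref{sec:voiculescu}) --- we conclude that $C^*(G)$ is not matricially weakly semiprojective, so $G$ is not matricially stable. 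This establishes (iii) $\Rightarrow$ (iv) and completes the cycle of implications.

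I do not anticipate any genuine obstacle here: the retract lemma is routine diagram-chasing, and everything else is quoted either from Theorem \ref{VirtAbWSP} or from the already-recorded non-stability of $\ZZ^2$. One could alternatively invoke Theorem \ref{VirtuallyAbelianMWSP} to handle $m \ge 3$ and use the retract argument only in the remaining case $m = 2$, but the uniform argument above avoids the case split.
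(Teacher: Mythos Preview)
Your proposal is correct. The paper also reads off (i)$\Leftrightarrow$(ii)$\Leftrightarrow$(iv) from Theorem~\ref{VirtAbWSP} and treats (iii)$\Rightarrow$(iv) separately, but it takes exactly the case split you mention at the end: it first invokes Theorem~\ref{VirtuallyAbelianMWSP} to force rank $\le 2$, and for rank exactly $2$ writes $G=\ZZ^2\times H$ with $H$ finite, sends the $\ZZ^2$-generators to the Voiculescu matrices and all of $H$ to identity, and then observes that a lift would produce commuting approximants to $(\Omega_n,S_n)$, contradicting Lemma~\ref{tensor}. Your retract argument encodes the same obstruction (the paper's map is precisely $\phi\circ r$ in your notation, and its analysis of the lift is your $\Psi\circ\iota$) but packages it more cleanly: it handles all $m\ge 2$ uniformly without appealing to Theorem~\ref{VirtuallyAbelianMWSP}, and it isolates the reusable fact that matricial weak semiprojectivity passes to retracts. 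The paper's version is more concrete; yours is more modular.
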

\begin{proof} All implications but (iii)$\Rightarrow$(iv) follow from Theorem \ref{VirtAbWSP}.

(iii)$\Rightarrow$(iv): by Theorem \ref{VirtuallyAbelianMWSP} $G$ has rank less or equal to 2. If it is equal to 2, then $G = \mathbb Z^2 \times H$, where $H$ is a finite group.
By sending the generators of $\mathbb Z^2$ to the Voiculescu matrices and all elements of $H$ to identity matrices, we obtain a homomorphism from $C^*(G)$ to
$\prod \matrM{n} / \oplus \matrM{n}$. If it was liftable, a lifting would send the generators of $\mathbb Z^2$ to commuting elements. This would contradict to Lemma \ref{tensor}.
\end{proof}

Stability of non-finitely generated countable abelian groups is much more involved and will be considered in a sequel paper.

\section{Classes of groups} \label{sec:classes}

\subsection{Crystallographic groups}\label{Crystal groups}

Among the virtually abelian finitely generated groups, the \emph{crystallographic groups} form a prominent class, since they describe all possible symmetry groups of lattices in $\RR^n$. By a theorem of Zassenhaus (\cite{hz:abr}), such groups may be characterized abstractly as those groups $G$ for which $\ZZ^n$ is a subgroup of finite index so that $\ZZ^n$ becomes maximal among all abelian subgroups of $G$. In this case $\ZZ^n$ is automatically normal, and the quotient group $D=G/\ZZ^n$ is called the \emph{point group}.

There are only finitely many crystallographic groups in each dimension, as proved by Bieberbach (\cite{lb:ber}) answering a part of Hilbert's 18th problem. At $n=1$ we have the two \emph{line groups} $\ZZ$ and $\ZZ_2 \star \ZZ_2$ which we already have seen are \SP, and at $n\geq 3$ we know by Theorem \ref{VirtuallyAbelianMWSP} that none of the stabilty properties are met.
  At $n=2$ we have the seventeen \emph{wallpaper groups} of which we have already encountered $\crystal{p1}=\ZZ^2$ and $\crystal{pg}$  and noted that $\crystal{p1}$ is not \MWSP\ but  $\crystal{pg}$ is, and that neither is \WSP. Thus, we see that stability properties are not determined by dimension alone. Invoking $K$-theory, we will show that exactly twelve of the wallpaper groups are \MWSP.

To do so, we will establish lack of matricial stability by Theorem \ref{exelloring} in five cases and establish matricial stability in the remaining twelve by appealing to an old result by Loring, Pedersen and the first named author (\cite[8.2.2(ii)]{elp:sar}) which shows that any \emph{2-dimensional NCCW complex} which has only torsion infinitesimals in its ordered $K_0$-group has the desired stability property. Recall that an element $[x]$ of the $K$-group $K_0(A)$ of a $C^*$-algebra $A$ is called an {\it infinitesimal}  if
\[
	-[1_A] \le n[x] \le [1_A]
\]
for any $n\in \mathbb Z$. Of course $0$ is an infinitesimal, and any $K_0$-map induced by a unital $*$-homomorphism will map infinitesimals to infinitesimals.

The class of $n$-dimensional {\it noncommutative CW  (NCCW) complexes}  is defined recursively by saying that the $0$-dimensional ones are exactly the finite-dimensional $C^*$-algebras, and that the $n$-dimensional ones are pullbacks of the form
\[
A_n=C(\II^n,F_n)\oplus_{C(\partial\II^n,F_n)}A_{n-1}
\]
where $A_{n-1}$ is an $(n-1)$-dimensional NCCW, $F_n$ is finite-dimensional, and the pullback is taken over the canonical map from $C(\II^n,F_n)$ to $C(\partial\II^n,F_n)$ on one hand, and an arbitrary  unital $*$-homomorphism $\gamma_n \colon A_{n-1}\to C(\partial\II^n,F_n)$ on the other.

In the 12 positive cases, we draw extensively on previous work \cite{yang, mcalister} which provide concrete realizations of  $C^*(G)$ as a 2-dimensional NCCW for any wallpaper group $G$. The thesis \cite{yang} also provides complete descriptions of the  $K$-groups, but  the order structures was not computed there. In some cases, like $\crystal{pg}$, it is easy to see that there are no infinitesimals in the $K_0$-group, but in general such computations are rather demanding. Fortunately, as pointed out in  \cite{mcalister}, the $K$-theory of NCCW complexes of dimension two or less can be systematically computed by appealing to the Mayer-Vietoris  exact sequence
\[
\xymatrix{
{K_1(A_n)}\ar[r]&{K_1(A_{n-1})}\ar[r]&{K_1(C(\partial\II^n,F_n))}\ar[d]^-{\delta_n}\\
{K_0(C(\partial\II^n,F_n))}\ar[u]&{K_0(A_{n-1})\oplus K_0(C(\II^n,F_n))}\ar[l]&{K_0(A_n)}\ar[l]^-{\phi_n}
}
\]
and we will use this to provide a more direct path to establishing that there are only torsion infinitesimals in these cases. We do not need to specify the maps in the Mayer-Vietoris sequence, except to note that $\phi_n$ is induced by a unital $*$-homomorphism (the canonical restriction map) and hence sends infinitesimals to infinitesimals.

\begin{theorem}\label{forelp}
Let $A_2=C(\II^2,F_2)\oplus_{C(\partial\II^2,F_2)}A_{1}$ be a $2$-dimensional NCCW complex with $s$ sheets, i.e. with $F_2=\bigoplus_{i=1}^s\mathbf{M}_{n_i}$ and consider the statements
\begin{enumerate}[(i)]
\item $\rank K_0(A_1)=\rank K_0(A_2)$
\item $\rank K_1(A_1)-\rank K_1(A_2)=s$
\item All infinitesimals of $K_0(A_2)$ are torsion
\item $A_2$ is matricially stable.
\end{enumerate}
Then
\[
(i)\Longleftrightarrow (ii)\Longrightarrow (iii)\Longrightarrow(iv)
\]
\end{theorem}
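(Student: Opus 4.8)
The plan is to exploit the Mayer--Vietoris sequence displayed above with $n=2$, reading off the ranks of the relevant $K$-groups, and then to analyze the infinitesimals directly. First I would observe that $K_0(C(\partial\II^2,F_2)) = K_0(C(S^1,F_2)) \cong \ZZ^s$ and $K_1(C(\partial\II^2,F_2)) \cong \ZZ^s$, since $\partial\II^2 \cong S^1$ and $F_2 = \bigoplus_{i=1}^s \mathbf{M}_{n_i}$ has $s$ simple summands; moreover the map $\phi_2\colon K_0(A_2) \to K_0(A_1)\oplus K_0(C(\II^2,F_2))$ together with the rest of the six-term sequence lets me compute Euler-characteristic--type rank identities. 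Writing the sequence as an exact hexagon and using that for a finite exact sequence of finitely generated abelian groups the alternating sum of ranks vanishes, I get
\[
\rank K_0(A_2) - \rank K_1(A_2) = \rank K_0(A_1) - \rank K_1(A_1) + \rank K_0(C(\II^2,F_2)) - \rank K_1(C(\partial\II^2,F_2)).
\]
Since $K_0(C(\II^2,F_2))\cong\ZZ^s$, $K_1(C(\II^2,F_2))=0$, $K_1(C(\partial\II^2,F_2))\cong\ZZ^s$ (so this term contributes $0$), and also $K_0(C(\II^2,F_2))$ maps onto $K_0(C(\partial\II^2,F_2))$ isomorphically after rationalization, a little bookkeeping reduces the identity to $\rank K_0(A_2) - \rank K_0(A_1) = \rank K_1(A_2) - \rank K_1(A_1) + s$, which is exactly the content of $(i)\Leftrightarrow(ii)$. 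So $(i)\Leftrightarrow(ii)$ is essentially a rank count in the Mayer--Vietoris sequence; the one point requiring care is checking that $K_1(A_1)\to K_1(C(\partial\II^2,F_2))$ and $K_0(C(\partial\II^2,F_2))\to K_0(A_1)\oplus K_0(C(\II^2,F_2))$ behave so that the alternating-sum identity collapses correctly, i.e.\ tracking which maps are rationally injective/surjective.

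Next, for $(i)\Rightarrow(iii)$: assume $\rank K_0(A_1) = \rank K_0(A_2)$. The map $\phi_2$ is induced by the unital restriction $*$-homomorphism $A_2 \to A_1$ (composed with projection), hence it sends infinitesimals to infinitesimals, and in particular the image of an infinitesimal of $K_0(A_2)$ under the $A_1$-component is an infinitesimal of $K_0(A_1)$. Proceeding by induction on the NCCW dimension, one shows the lower-dimensional piece $A_1$ has only torsion infinitesimals (a one-dimensional NCCW complex is a pullback of $C(\II,F_1)$ and a finite-dimensional algebra over $C(\partial\II,F_1)$, and its $K_0$ is easily seen to have no nontorsion infinitesimals, since $K_0$ of $C(\II,F_1)$ and of finite-dimensional algebras are free with the obvious order). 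Given an infinitesimal $x \in K_0(A_2)$, its image $\phi_2(x)$ is torsion in $K_0(A_1)\oplus K_0(C(\II^2,F_2))$; but the $C(\II^2,F_2)\simeq F_2$ component of $K_0$ is ordered and free, so the infinitesimal condition forces that component of $\phi_2(x)$ to be $0$ outright, and the $A_1$-component to be torsion by the inductive hypothesis. Hence $\phi_2(x)$ is torsion. By exactness, some multiple of $x$ lies in the image of $K_1(C(\partial\II^2,F_2))\cong\ZZ^s$ under the index map $\delta_2$; the rank hypothesis $(i)$ — via the equivalence with $(ii)$ — is precisely what guarantees this image is itself torsion (the index map has rank-$0$ image exactly when the rank bookkeeping balances), so $x$ is torsion. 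The main obstacle here is this last step: extracting "$\delta_2$ has torsion image" cleanly from the rank equality, which is why it is convenient to have $(i)\Leftrightarrow(ii)$ in hand first and argue with exactness of the hexagon tensored with $\QQ$.

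Finally $(iii)\Rightarrow(iv)$ is a direct appeal to the cited theorem of Eilers--Loring--Pedersen \cite[8.2.2(ii)]{elp:sar}: a $2$-dimensional NCCW complex all of whose $K_0$-infinitesimals are torsion is matricially stable; since $A_2$ is by hypothesis a $2$-dimensional NCCW complex and, by $(iii)$, has only torsion infinitesimals in $K_0$, it is matricially stable, and no new argument is needed. I expect the rank-bookkeeping in the Mayer--Vietoris hexagon — establishing $(i)\Leftrightarrow(ii)$ and then leveraging it to control the image of the index map in the proof of $(i)\Rightarrow(iii)$ — to be the genuinely delicate part; the two endpoints $(iii)\Rightarrow(iv)$ and the reduction to the one-dimensional base case are routine.
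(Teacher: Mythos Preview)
Your proposal is correct and follows essentially the same route as the paper: Mayer--Vietoris for the pullback, rank-counting to get $(i)\Leftrightarrow(ii)$, showing that infinitesimals of $K_0(A_2)$ are killed by $\phi_2$ (since $K_0(A_1)\oplus K_0(F_2)$ has none), hence lie in $\operatorname{Im}\delta_2$, and then arguing that $\operatorname{Im}\delta_2$ is torsion under hypothesis $(i)$; the final implication is just the citation to \cite[8.2.2(ii)]{elp:sar}.

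One organizational point deserves tightening. Your alternating-sum argument for $(i)\Leftrightarrow(ii)$ needs no information about individual maps: the Euler characteristic of any six-term exact sequence of finitely generated abelian groups vanishes, and this gives $\rank K_0(A_2)-\rank K_0(A_1)=\rank K_1(A_2)-\rank K_1(A_1)+s$ unconditionally. The fact you flag --- that restriction induces an isomorphism $K_0(C(\II^2,F_2))\to K_0(C(\partial\II^2,F_2))$ --- is not needed there. Where it \emph{is} needed is exactly the step you call ``the main obstacle'': extracting $\rank\operatorname{Im}\delta_2=0$ from $(i)$. That surjectivity forces the boundary map $K_0(C(\partial\II^2,F_2))\to K_1(A_2)$ to vanish, so the rationalized hexagon breaks into a linear exact sequence; then the portion
\[
\QQ^s \xrightarrow{\delta_2\otimes 1} K_0(A_2)_\QQ \xrightarrow{\phi_2\otimes 1} K_0(A_1)_\QQ\oplus\QQ^s \twoheadrightarrow \QQ^s \to 0
\]
gives $\dim\operatorname{Im}(\delta_2\otimes 1)=\rank K_0(A_2)-\rank K_0(A_1)$, which is zero exactly under $(i)$. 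This is precisely how the paper argues, so once you relocate that surjectivity observation your sketch matches the paper's proof.
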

\begin{proof}
Note first that at $n=1$, the Mayer-Vietoris sequence degenerates to
\[
\xymatrix{
0\ar[r]&K_0(A_1)\ar[r]^-{\phi_1}&K_0(A_0)\oplus K_0(F_1)\ar[r]&K_0(F_1)^2\ar[r]&K_1(A_1)\ar[r]&0}
\]
with $\phi_1$ sending infinitesimals to infinitesimals. But since $K_0(A_0)\oplus K_0(F_1)$ has only the trivial infinitesimal, the same is the case for $K_0(A_1)$. At $n=2$, the Mayer-Vietoris sequence becomes
\[
\xymatrix{
{K_1(A_2)}\ar[r]&{K_1(A_{1})}\ar[r]&{K_0(F_2)}\ar[d]^-{\delta_2}\\
{K_0(F_2)}\ar[u]^-0&{K_0(A_{1})\oplus K_0(F_2)}\ar[l]&{K_0(A_2)}\ar[l]^-{\phi_2}
}
\]
where the upward map vanishes since it is easy to see that the preceding map is a surjection. Again we see that $K_0(A_{1})\oplus K_0(F_2)$ has no infinitesimals, so any infinitesimals in $K_0(A_2)$ must lie in $\operatorname{Im}{(\delta_2)}$.

Now we note that $ K_0(F_2)=\ZZ^s$ and rationalize to get
\[
\xymatrix{
{K_1(A_2)\otimes\QQ}\ar[r]&{K_1(A_1)\otimes\QQ}\ar[r]&{\QQ^s}\ar[d]^-{\delta_2\otimes\operatorname{id}_{\QQ}}\\
{\QQ^s}\ar[u]^-0&{(K_0(A_1)\otimes\QQ)\oplus \QQ^s}\ar[l]&{K_0(A_2)\otimes\QQ}\ar[l]
}
\]
and see that (i) and (ii) are equivalent by comparing dimensions in the upper and lower row. By the same argument, these conditions imply that $\delta_2\otimes\operatorname{id}_{\QQ}=0$, whence $\operatorname{Im}(\delta_2)\subset \operatorname{Tor}(K_0(A_2))$, and we have proved that (i) and (ii) imply (iii). The last implication is \cite[8.2.2(ii)]{elp:sar}.
\end{proof}

\begin{figure}
\begin{center}

\renewcommand{\arraystretch}{1.3}
\begin{tabular}{|c||l|c|c|c|c|c|}\hline
&Generators&$F$&\multicolumn{2}{c|}{$K_0(A_i)$}&\multicolumn{2}{c|}{$K_1(A_i)$}\\\hline\hline
\cellcolor{black!20}{$\crystal{p1}$}&$xy=yx$&$\CC^2$&$\ZZ^2$&$\ZZ$&$\ZZ^2$&$\ZZ^2$\\\hline

\cellcolor{black!20}{ $\crystal{p2}$}&$t^2_i=(t_1t_2t_3)^2=e$&$\matrM{2}^2$&$\ZZ^6$&$\ZZ^5$&$0$&$\ZZ$\\\hline

$\crystal{pm}$&$r_1^2=r_2^2=e, r_iy=yr_i$&$\matrM{2}$&$\ZZ^3$&$\ZZ^3$&$\ZZ^3$&$\ZZ^4$\\\hline

$\crystal{pg}$&$p^2=q^2$&$\matrM{2}$&$\ZZ$&$\ZZ$&$\!\ZZ+\ZZ_2\!$&$\!\ZZ^2+\ZZ_2\!$\\\hline

$\crystal{cm}$&$r^2=e,rp^2=p^2r$&$\matrM{2}$&$\ZZ^2$&$\ZZ^2$&$\ZZ^2$&$\ZZ^3$\\\hline

$\crystal{pmm}$&
$\!\!\!\begin{array} {r@{}l@{}}
	r^2 & {}= (r_1r_2)^2=(r_2r_3)^2\\
	    & {}= (r_3r_4)^2=(r_4r_1)^2=e
\end{array}$
&$\matrM{4}$&$\ZZ^9$&$\ZZ^9$&$0$&$\ZZ$\\\hline

$\crystal{pmg}$&
$\!\!\!\begin{array} {r@{}l@{}}
	& {} r^2=t_1^2=t_2^2=e,\\
	{} &t_1rt_1=t_2rt_2
\end{array}$
&$\matrM{4}$&$\ZZ^4$&$\ZZ^4$&$\ZZ$&$\ZZ^2$\\\hline

$\crystal{pgg}$&$(po)^2=(p^{-1}o)^2=e$&$\matrM{4}$&$\ZZ^3$&$\ZZ^3$&$\ZZ_2$&$\ZZ+\ZZ_2$\\\hline

$\crystal{cmm}$&
$\!\!\!\begin{array} {r@{}l@{}}
	r_i^2 & {}=t^2=(r_1r_2)^2 \\
	& {}=(r_1tr_2t)^2=e
\end{array}$
&$\matrM{4}$&$\ZZ^5$&$\ZZ^5$&$0$&$\ZZ$\\\hline

\cellcolor{black!20}{$\crystal{p4}$}&$r^4=t^2=(rt)^4=e$&$\matrM{4}^2$&$\ZZ^9$&$\ZZ^8$&$0$&$\ZZ$\\\hline

$\crystal{p4m}$&
$\!\!\!\begin{array} {r@{}l@{}}
	r_i^3 & {}=(r_1r_2)^4=(r_2r_3)^4\\
	& {}=(r_3r_1)^2=e
\end{array}$
&$\matrM{4}$&$\ZZ^9$&$\ZZ^9$&$0$&$\ZZ$\\\hline

$\crystal{p4g}$&$t^4=r^2=(t^{-1}rtr)^2=e$&$\matrM{8}$&$\ZZ^6$&$\ZZ^6$&$0$&$\ZZ$\\\hline

\cellcolor{black!20}{$\crystal{p3}$}&$r^3=t^3=(rt)^3=e$&$\matrM{3}^2$&$\ZZ^8$&$\ZZ^7$&$0$&$\ZZ$\\\hline

$\crystal{p3m1}$&
$\!\!\!\begin{array} {r@{}l@{}}
	r_i^2 & {}=(r_1r_2)^3=(r_2r_3)^3\\
	& {}=(r_3r_1)^3=e
\end{array}$
&$\matrM{6}$&$\ZZ^5$&$\ZZ^5$&$\ZZ$&$\ZZ^2$\\\hline

$\crystal{p31m}$&$t^3=r^2=(t^{-1}rtr)^3=e$&$\matrM{6}^2$&$\ZZ^5$&$\ZZ^5$&$\ZZ$&$\ZZ^3$\\\hline

\cellcolor{black!20}{$\crystal{p6}$}&$r^3=t^2=(rt)^6=e$&$\matrM{6}^2$&$\ZZ^{10}$&$\ZZ^9$&$0$&$\ZZ$\\\hline

$\crystal{p6m}$&
$\!\!\!\begin{array} {r@{}l@{}}
	r_i^2 & {}=(r_1r_2)^3=(r_2r_3)^6\\
	& {}=(r_3r_1)^2=e
\end{array}$
&$\matrM{12}$&$\ZZ^8$&$\ZZ^8$&$0$&$\ZZ$\\\hline

\end{tabular}
\end{center}
\caption{The 17 wallpaper groups. \textsl{The table is ordered as Table 3 in \cite{coxmos} and the generators given are taken from there (some generators' names are substituted to avoid notational clashes). The groups in shaded cells fail to be matricially stable. The $K$-groups given are for $i=2$ to the left and for $i=1$ to the right.}}\label{sweet17}
\end{figure}

\begin{corollary} \label{cor:mwspcrystal}
All of the groups  $\crystal{cm}$,  $\crystal{pm}$,  $\crystal{pg}$,  $\crystal{cmm}$, $\crystal{pmm}$,  $\crystal{pmg}$,  $\crystal{pgg}$,  $\crystal{p3m1}$,  $\crystal{p31m}$, $\crystal{p4mm}$, $\crystal{p4mg}$, $\crystal{p6mm}$ are \MWSP.
\end{corollary}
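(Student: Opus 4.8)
The plan is to reduce the whole statement to a single application of Theorem \ref{forelp}, group by group, feeding it the data tabulated in Figure \ref{sweet17}. First I would invoke \cite{yang, mcalister}: for each of the twelve groups $G$ in question, $C^*(G)$ is isomorphic to a $2$-dimensional NCCW complex $A_2 = C(\II^2, F_2) \oplus_{C(\partial\II^2, F_2)} A_1$ in which $F_2 = \bigoplus_{i=1}^s \mathbf{M}_{n_i}$ is precisely the finite-dimensional algebra recorded in the ``$F$'' column of the table, so that the number of sheets $s$ equals the number of matrix-block summands listed there. Concretely $s = 1$ for all twelve except $\crystal{p31m}$, where $F = \matrM{6}^2$ and $s = 2$. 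Here the groups appearing in the corollary as $\crystal{p4mm}, \crystal{p4mg}, \crystal{p6mm}$ are the ones listed as $\crystal{p4m}, \crystal{p4g}, \crystal{p6m}$ in Figure \ref{sweet17}.

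Next I would, for each group, read off $\rank K_1(A_1)$ and $\rank K_1(A_2)$ from the two right-hand columns of the table and check condition (ii) of Theorem \ref{forelp}, namely $\rank K_1(A_1) - \rank K_1(A_2) = s$. For the eleven single-sheet groups this amounts to checking that the free rank of $K_1$ drops by exactly one from stage $1$ to stage $2$: for instance $K_1$ passes from $\ZZ^2 + \ZZ_2$ to $\ZZ + \ZZ_2$ for $\crystal{pg}$, from $\ZZ + \ZZ_2$ to $\ZZ_2$ for $\crystal{pgg}$, from $\ZZ^4$ to $\ZZ^3$ for $\crystal{pm}$, from $\ZZ^3$ to $\ZZ^2$ for $\crystal{cm}$, from $\ZZ^2$ to $\ZZ$ for $\crystal{pmg}$ and $\crystal{p3m1}$, and from $\ZZ$ to $0$ for $\crystal{pmm}, \crystal{cmm}, \crystal{p4m}, \crystal{p4g}, \crystal{p6m}$; while for $\crystal{p31m}$ the rank drops by two, from $\ZZ^3$ to $\ZZ$. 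Equivalently, one may check the manifestly equivalent condition (i), that $\rank K_0(A_1) = \rank K_0(A_2)$, which is read off directly from the two $K_0$-columns.

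Theorem \ref{forelp} then yields $(ii)\Rightarrow(iii)\Rightarrow(iv)$: all infinitesimals of $K_0(C^*(G)) = K_0(A_2)$ are torsion, and $C^*(G)$ is matricially stable. Finally, since each such $G$ is finitely presented, $C^*(G)$ is a finitely presented $C^*$-algebra, so matricial stability and matricial weak semiprojectivity coincide by \cite[Theorem 5.6]{setal:ccsr}; hence each of the twelve groups is \MWSP, as claimed.

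I do not anticipate a genuine obstacle here: the substantive inputs — the realization of each $C^*(G)$ as a concrete $2$-dimensional NCCW complex and the computation of its $K$-groups — are already supplied by \cite{yang, mcalister} and summarized in Figure \ref{sweet17}, so the remaining work is only the rank bookkeeping above. The one point deserving care is to make sure that the $F$ borrowed from those references is indeed the top-stage coefficient algebra $F_2$ (so that $s$ is counted correctly), and to discard the finite summands (the $\ZZ_2$'s occurring in $K_1$ for $\crystal{pg}$ and $\crystal{pgg}$) when passing to ranks.
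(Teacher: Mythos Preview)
Your proposal is correct and follows exactly the same approach as the paper: invoke the NCCW realizations and $K$-group computations from \cite{yang, mcalister} as recorded in Figure~\ref{sweet17}, then verify condition (i) or (ii) of Theorem~\ref{forelp} for each of the twelve groups. The paper's proof is terser (it omits the explicit rank checks and the final remark about matricial stability versus matricial weak semiprojectivity), but the argument is identical.
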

\begin{proof}
The $K$-groups of the presentation $C^*(G)$ as a $2$-dimensional NCCW $A_2$ given in \cite{yang} is given in Figure \ref{sweet17}. Theorem \ref{forelp} finishes the argument by checking condition (i) or (ii).
\end{proof}

\begin{theorem} \label{thm:NonMWSPcrystals}
Neither of the groups  $\crystal{p1}$,  $\crystal{p2}$,  $\crystal{p3}$,  $\crystal{p4}$, $\crystal{p6}$ are \MWSP.
\end{theorem}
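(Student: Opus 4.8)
The plan is to apply the Exel--Loring winding-number obstruction of Theorem~\ref{exelloring}, feeding it Voiculescu-style almost representations. Each of the five groups has the form $G=\ZZ^2\rtimes_\theta D$ with $D$ cyclic of order $N\in\{1,2,3,4,6\}$ and $\theta\colon D\to SL_2(\ZZ)$ sending a generator of $D$ to the corresponding rotation (these are precisely the wallpaper groups whose point group contains no reflection). Write $H=\ZZ^2$ for the translation subgroup, $a,b$ for a basis of $H$, and note that $a$ and $b$ are words $w_a,w_b$ in the generators $g_1,\dots,g_s$ listed in Figure~\ref{sweet17}; since $\mcomm{a}{b}=e$ in $G$, the relation $R:=\mcomm{w_a}{w_b}$ is \emph{homogeneous} and is a consequence of the defining relations of $G$. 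Put $L=L(R)$.

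First I would construct an ``induced Voiculescu'' approximate homomorphism $\phi_n\colon G\to\mathcal U(\matrM{nN})$ by running the induced-representation construction in the proof of Theorem~\ref{MapToScalarFunctions}, but with the character $\chi_{\vec t}$ of $H$ replaced by the \emph{projective} representation $\vec m=(m_1,m_2)\mapsto\Omega_n^{m_1}S_n^{m_2}$ built from the Voiculescu matrices, which satisfy the Weyl relation $\Omega_nS_n=\omega_nS_n\Omega_n$. Concretely, on $\CC^N\otimes\CC^n$ with $\CC^N$ indexed by $G/H$: elements of $H$ act block-diagonally through the $\theta$-twists of $\vec m\mapsto\Omega_n^{m_1}S_n^{m_2}$, and a general $g\in G$ permutes the blocks with the corresponding twists. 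For $\crystal{p1}$ this is just $\Omega_n,S_n$; for $\crystal{p2}$ it reduces to the transparent choice $\phi_n(t_1)=P$, $\phi_n(t_2)=P\Omega_n$, $\phi_n(t_3)=PS_n$ with $P$ the reversal permutation, which satisfies $P\Omega_nP=\Omega_n^{-1}$, $PS_nP=S_n^{-1}$, $P^2=\myu_n$; $\crystal{p4}$ uses the finite Fourier transform $F_n$ (with $F_n^4=\myu_n$, $F_n\Omega_nF_n^{*}=S_n^{-1}$, $F_nS_nF_n^{*}=\Omega_n$) in place of $P$, and $\crystal{p3},\crystal{p6}$ its order-$3$ and order-$6$ analogues; in each case $\phi_n(g_i)$ is a word in the Voiculescu matrices and this normalizing unitary.

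Next I would check two things. \textbf{(a)} $\phi_n$ is an approximate homomorphism, so its restriction to the (symmetrized) generating set is a $\delta$-almost homomorphism for $n$ large: any word in the $g_i$ that is trivial in $G$ evaluates under $\phi_n$ to a block-diagonal operator whose every block is the corresponding word in $\Omega_n,S_n$, and by the Weyl relation such a word equals $\myu_n$ times a power of $\omega_n$ with exponent bounded in terms of the word alone, hence tends to $\myu_n$ as $n\to\infty$. \textbf{(b)} $R(\phi_n)=\omega_n^{\pm1}\myu_{nN}$, because on each block $R$ evaluates to $\mcomm{\Omega_n^{p}S_n^{q}}{\Omega_n^{r}S_n^{s}}=\omega_n^{\pm\det M}\myu_n=\omega_n^{\pm1}\myu_n$, where $M\in SL_2(\ZZ)$ is the relevant $\theta$-twist and $\det M=1$. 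Consequently $\|R(\phi_n)-\myu_{nN}\|=|\omega_n-1|<1/2$ for $n$ large, and the associated closed curve is
\[
	\gamma^{\phi_n}(r)=\det\bigl(r\,\omega_n^{\pm1}\myu_{nN}+(1-r)\myu_{nN}\bigr)=\bigl(1+r(\omega_n^{\pm1}-1)\bigr)^{nN},
\]
whose winding number around $0$ is $\pm N\neq0$: the chord from $1$ to $\omega_n^{\pm1}$ sweeps argument $\pm2\pi/n$, so its $nN$-th power sweeps $\pm2\pi N$, exactly as in the Exel--Loring computation \cite{el:acum}.

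Finally, if some $G$ in the list were matricially stable I would take $\epsilon=\tfrac{1}{3L}<\tfrac{1}{2L}$ and let $\delta>0$ be as in Proposition~\ref{prop:mwspandalmostreps}; for $n$ large $\phi_n$ is a $\delta$-almost homomorphism, so there is a representation $\pi_n$ of $G$ with $\|\pi_n(g_i)-\phi_n(g_i)\|\le\epsilon$. Then $A_i:=\pi_n(g_i)$ are unitaries with $R(A_1,\dots,A_s)=\pi_n(R)=\myu_{nN}$ (as $R$ is a consequence of the relations of $G$) and $\max_i\|A_i-\phi_n(g_i)\|<\tfrac{1}{2L}$, so Theorem~\ref{exelloring} forces $\wind\gamma^{\phi_n}=0$, contradicting \textbf{(b)}. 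Hence none of $\crystal{p1},\crystal{p2},\crystal{p3},\crystal{p4},\crystal{p6}$ is matricially stable, with $\crystal{p1}=\ZZ^2$ ($N=1$) recovering the classical Voiculescu/Exel--Loring case. I expect the real work to be in \textbf{(a)}: making the induced Voiculescu representation precise and verifying that every relator of $G$ is sent to $\myu_n$ up to a bounded power of $\omega_n$, which amounts to tracking how the block permutation and the $SL_2(\ZZ)$-twists interact with the Weyl commutation of $(\Omega_n,S_n)$. For $\crystal{p2}$ (and, via the Fourier transform, for $\crystal{p4}$) this is immediate from the explicit formulas above; the order-$3$ and order-$6$ cases go through in the same way once the appropriate normalizing unitaries are written down.
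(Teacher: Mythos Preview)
Your strategy is the same as the paper's: feed Voiculescu-type almost representations into Theorem~\ref{exelloring} and read off a nonzero winding number. The difference is in packaging. The paper works case by case: for each of $\crystal{p2},\crystal{p3},\crystal{p4},\crystal{p6}$ it rewrites one of the \emph{defining} relations as a homogeneous relation (using the torsion relations to replace $g^k$ by $g^{k-\operatorname{ord}(g)}$), then writes down explicit block matrices in $\matrM{Nn}$ built from $\Omega_n,S_n$ and checks directly that the relator evaluates to $\overline\omega_n^{\,c}\myu_{Nn}$ for a concrete nonzero $c$. You instead fix once and for all the homogeneous word $R=\mcomm{w_a}{w_b}$ (a \emph{consequence} of the defining relations) and propose a single induced-from-$\ZZ^2$ construction. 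Your computation in (b) is correct: on the induced space the cocycle corrections accrued when evaluating $w_a,w_b$ are block-diagonal scalars, and since $a,b\in H$ the operators $(\operatorname{Ind}\rho)(a),(\operatorname{Ind}\rho)(b)$ are themselves block-diagonal, so those scalars commute past and cancel in the commutator, leaving exactly $\omega_n\myu_{nN}$ on each block. That is the cleanest conceptual difference from the paper and is a genuine gain in uniformity.

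One caution. Your sentence ``$\crystal{p3},\crystal{p6}$ go through in the same way once the appropriate normalizing unitaries are written down'' suggests working in $\matrM{n}$ with a single unitary implementing the order-$3$ or order-$6$ automorphism of the Weyl pair, analogous to the flip $P$ and the discrete Fourier transform $F_n$. Such a unitary exists only via the Weil (metaplectic) representation of $SL_2(\ZZ/n\ZZ)$, and a genuine (non-projective) lift of the order-$3$ element is available only for suitable $n$ (odd $n$ suffices); you should either say this or, more simply, stay with the induced construction on $\CC^N\otimes\CC^n$, which you already set up and which needs no such normalizer. The paper avoids this issue entirely by writing down the $\matrM{Nn}$ block matrices by hand. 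Also note a small inconsistency: once you pass to the $\matrM{n}$ pictures for $\crystal{p2}$ and $\crystal{p4}$ the winding number you get is $\pm1$, not $\pm N$; either is fine, but keep the bookkeeping straight between the two constructions.
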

\begin{proof}
 \crystal{p2} is defined by the relations $t_i^2 = (t_1t_2t_3)^2 = e.$ Since $t_i^2 = e$,  the relation $(t_1t_2t_3)^2 = e$ is equivalent to the homogeneous relation
\begin{equation} \label{0}
	t_1t_2t_3t_1^{-1}t_2^{-1}t_3^{-1}= e,
\end{equation}

and hence our Exel-Loring type invariant applies to this relation.

Let
\begin{align*}
 	T_{n,1} &= \begin{pmatrix} 0 & \Omega_n  \\ \Omega_n^{-1} & 0 \end{pmatrix}, \\
 	T_{n,2} &= \begin{pmatrix} 0 & S_n  \\ S_n^{-1} & 0 \end{pmatrix}, \\
 	T_{n,3} &= \begin{pmatrix} 0 & \myu_n  \\ \myu_n & 0 \end{pmatrix}.
 \end{align*}

Then for each $n \in \NN$,  $i = 1, 2,3 $, $T_{n, i}$ are unitaries and $T_i^2 = \myu_{2n}$.
It is easy to compute that
\[
	(T_{n, 1}T_{n, 2}T_{n, 3})^2 = \diag (\Omega_nS_n^{-1}\Omega_n^{-1}S_n, \Omega_n^{-1}S_n\Omega_nS_n^{-1}) = \overline\omega_n \myu_{2n}.
\]
Thus $\|(T_{n, 1}T_{n, 2}T_{n, 3})^2 - \myu_{2n}\| \to 0$ and hence $T_{n, 1}, T_{n, 2}, T_{n, 3}$ define an approximate representation of \crystal{p2}.
Since
\begin{align*} 
\wind \det (rT_{n, 1}&T_{n, 2}T_{n, 3}T_{n, 1}^{-1}T_{n, 2}^{-1}T_{n, 3}^{-1} + (1-r)\myu_{2n}) \\
 &= \wind \det (r(T_{n, 1}T_{n, 2}T_{n, 3})^2 + (1-r)\myu_{2n}) \\
 &= \wind (r \overline\omega_n + 1-r)^{2n} = -2 \neq 0,
\end{align*}
we conclude by Theorem \ref{exelloring} that $T_{n, 1}, T_{n, 2}, T_{n, 3}$ are not close to any unitaries satisfying (\ref{0}) and hence this approximate representation is not close to any actual representation of \crystal{p2}.

We now turn to  \crystal{p4}, using the presentation in Figure \ref{sweet17}.
Since $r = r^{-3}$ and $t = t^{-1}$, the relation $(rt)^4 = e$ is equivalent to the homogeneous relation
\begin{equation}\label{1}
	r^{-3}trtrt^{-1}rt^{-1}= e,
\end{equation}
and hence the methods above apply to this relation.
Let
\begin{align*}
	R_n &= \begin{pmatrix} 0 & 0 & 0& S_n \\ S_n & 0 & 0& 0 \\ 0 & S_n^{-1} & 0& 0 \\ 0 & 0 & S_n^{-1}& 0 \end{pmatrix}, \\
	T_n &= \begin{pmatrix} 0 & 0 & \Omega_n &0 \\ 0 & 0 & 0& \Omega_n  \\ \Omega_n^{-1} & 0 & 0& 0 \\ 0&\Omega_n^{-1} & 0 & 0 \end{pmatrix}.
\end{align*}
Then for each $n\in \mathbb N$, $R_n$ and $T_n$ are unitaries and $R_n^4 = T_n^2 = 1.$ It is easy to compute that
\begin{align*}
	(R_nT_n)^4 = & \diag (S_n\Omega_n^{-1}S_n\Omega_nS_n^{-1}\Omega_nS_n^{-1}\Omega_n^{-1},   S_n\Omega_nS_n^{-1}\Omega_nS_n^{-1}\Omega_n^{-1}S_n\Omega_n^{-1}, \\
		& \ S_n^{-1}\Omega_nS_n^{-1}\Omega_n^{-1}S_n\Omega_n^{-1}S_n\Omega_n,   S_n^{-1}\Omega_n^{-1}S_n\Omega_n^{-1}S_n\Omega_nS_n^{-1}\Omega_n)\\
		= & \ \overline \omega_n^2 \myu_{4n}.
\end{align*}
		
Thus $\|(R_nT_n)^4 -\myu_{4n}\|\to 0$ and hence $R_n, T_n$ define an approximate representation of  \crystal{p4}.
Since
\begin{multline*}
	\wind \det (rR^{-3}T_nR_nT_nR_nT_n^{-1}R_nT_n^{-1} + (1-r)\myu_{4n}) \\ = \wind \det (r(R_nT_n)^4 + (1-r)\myu_{4n})= \wind (r\overline \omega_n^2 + 1-r)^{4n} = -8 \neq 0,
\end{multline*}
we conclude by Theorem \ref{exelloring} that $T_n, R_n$ are not close to any unitaries satisfying (\ref{1}) and hence this approximate representation is not close to any actual representation of  \crystal{p4}.

For \crystal{p3} we similarly rewrite the last of the relations given in Figure \ref{sweet17} as the homogeneous relation
\[
	r^{-2}t^{-2}rtrt = e,
\]
and consider the matrices
\begin{align*}
	R_n &= \begin{pmatrix} 0 & 0 &  S_n^{-1} \\ S_n & 0& 0 \\ 0 & \myu_n & 0 \end{pmatrix}, \\
	T_n &= \begin{pmatrix} 0 & 0 & \myu_n \\ \Omega_n & 0 & 0  \\ 0 & \Omega_n^{-1} & 0 \end{pmatrix}.
\end{align*}
Then for each $n\in \mathbb N$, $R_n$ and $T_n$ are unitaries and $R_n^3 = T_n^3 = 1.$ It is easy to compute that
\begin{multline*}
	(R_nT_n)^3 = \diag (S_n^{-1}\Omega_n^{-1}S_n\Omega_n,   S_n\Omega_nS_n^{-1}\Omega_n^{-1},
  \Omega_nS_n^{-1}\Omega_n^{-1}S_n) =  \overline \omega_n \myu_{3n}.
 \end{multline*}
Thus $\|(R_nT_n)^3 -\myu_{3n}\|\to 0$ and hence $R_n, T_n$ define an approximate representation of  \crystal{p3}.
Since
\[
	\wind \det (r(R_nT_n)^3 + (1-r)\myu_{3n})= \wind (r\overline \omega_n + 1-r)^{3n} = -3 \neq 0,
\]
it follows from Theorem \ref{exelloring} that this approximate representation is not close to any actual representation of \crystal{p3}.

Finally, for  \crystal{p6} we rewrite to
\[
	r^{-2}tr^{-2}t^{-1}rtrt^{-1}rtrt^{-1}= e,
\]
and let
\begin{gather*}
	R_n = \begin{pmatrix} 0 & 0 & 0& 0& S_n^{-1} & 0 \\ 0 & 0 & 0& 0& 0 & S_n^{-1} \\ S_n & 0 & 0& 0& 0 & 0 \\ 0 & S_n & 0& 0& 0 & 0 \\ 0 & 0 & \myu_n & 0& 0 & 0 \\ 0 & 0 & 0& \myu_n& 0 & 0 \end{pmatrix}, \\
	T_n = \begin{pmatrix} 0 & 0 & 0& \Omega_n& 0 & 0 \\ 0 & 0 & 0& 0& \Omega_n & 0 \\ 0 & 0 & 0& 0& 0 & \Omega_n \\ \Omega_n^{-1} & 0 & 0& 0& 0 & 0 \\ 0 & \Omega_n^{-1} & 0 & 0& 0 & 0 \\ 0 & 0 & \Omega_n^{-1}& 0& 0 & 0 \end{pmatrix}.
\end{gather*}
Then for each $n\in \mathbb N$, $R_n$ and $T_n$ are unitaries and $R_n^3 = T_n^2 = 1.$ It is easy to compute that $(R_nT_n)^6 = \overline \omega_n^2 \myu_{6n}.$ Thus $\|(R_nT_n)^6 -\myu_{6n}\|\to 0$ and
hence $R_n, T_n$ define an approximate
representation of  \crystal{p6}. Since
\[
	\wind \det (r(R_nT_n)^6 + (1-r)\myu_{6n})= \wind (r\overline \omega_n^2 + 1-r)^{6n} = -12\neq 0,
\]
we conclude by Theorem \ref{exelloring} that this approximate representation is not close to any actual representation of \crystal{p6}.
\end{proof}

\begin{remark}
Just as what is the case for $\crystal{p1}=\ZZ^2$, it is possible to quantify the obstruction to matricial stability for representations of all $\crystal{pn}$ with $\mathfrak n=1,2,3,4,6$ by means of $K$-theory.
Indeed, one may choose a ``Bott-type'' element $x_0 \in K_0(C^*(\crystal{pn}))$ with the property that if $\phi \colon C^*(\crystal{pn}) \to \prod \matrM{n_i}/ \bigoplus \matrM{n_i}$ satisfies $\phi_*(x_0)=0$, then $\phi$ has a lift.
As in the classical case (cf.\ \cite{el:acum, elp:sar, setal:ccsr}) the obstruction can also be formulated for almost representations of the groups, using $K$-theory or winding numbers as desired.

In fact, the defining difference between the 5 groups lacking matricial stability and the 12 groups enjoying it is that the former are given by rotations alone, whereas the latter have at least one (glide) reflection.
In the 5 former cases the symmetries are hence all orientation-preserving, and the rational homology of the crystallographic group is the homology of some surface which is orientable, and hence has a nontrivial class in dimension 2.
This in turn leads to such a Bott-type element in $K_0$ of the group $C^*$-algebra by naturality of the Baum-Connes map.
In the 12 latter cases, the surface is not orientable, and the second homology vanishes.
Although our technical tools are not sufficiently precise to allow a formal proof neither of Theorem \ref{thm:NonMWSPcrystals} nor Corollary \ref{cor:mwspcrystal} along these lines, surely this is the underlying geometric explanation of the dichotomy.\footnote{\SESTART See the forthcoming paper \cite{DadarlatComing} for significant steps in that direction.\SEEND}
We are grateful to Andreas Thom for clarifying these matters for us, and for pointing out that similar phenomena occur for sofic groups in \cite{1801.08381}.
\end{remark}

\begin{remark}
Since we have proved all the forward implications in Theorem \ref{forelp} and since (iv) fails for the 5 remaining wallpaper groups, we have proved that all the conditions in Theorem \ref{forelp} are equivalent for the NCCW complexes associated to wallpaper groups. We suspect that may be true in general, but will not pursue that here.\footnote{\SESTART In  \cite{DadarlatComing}, Dadarlat establishes $(iv)\Longrightarrow(iii)$ of Theorem \ref{forelp} in wide generality.\SEEND} Note also that since the $K$-groups in the 12 positive cases are torsion free, in fact there are no infinitesimals at all. This implies that $C^*(G)$ are weakly stable with respect to the bigger class of $C^*$-algebras of stable rank one because of \cite[8.2.2(i)]{elp:sar}. Again, we have no use for that observation here.
\end{remark}

\subsection{Finitely generated torsion-free 2-step nilpotent groups}\label{Finitely generated torsion-free 2-step nilpotent groups}

We are going to use the following fact.

\begin{theorem}[{\cite[Theorem 4.1]{nilp}}] \label{thm:nilppresentaion}
Let $G$ be a group, and let $n, m$ be two integers greater than or equal to zero.
Then the following two statements are equivalent:

\begin{enumerate}
	\item $G$ admits a presentation of the form
	\[
	G = \genrel{ a_1, \ldots , a_n, c_1, \ldots ,c_m }{ \!\!\! \begin{array}{l} \mcomm{a_i}{a_j} = \prod_{t=1}^m c_t^{\lambda_t^{ij}}, \text{ for }  1 \leq i < j \leq n,  \\ \mcomm{a_i}{c_j} = \mcomm{ c_k}{c_r} = e, \text{ for all } i,j,k,r \end{array} \!\!\! }.
	\]
	\item $G$ is a finitely generated, torsion-free, 2-step nilpotent group satisfying
	\begin{gather*}
		\rank  (G/Z(G)) + \rank  Z(G) = n+m, \\
		\rank  G' \le m \le \rank  Z(G).
	\end{gather*}
\end{enumerate}
\end{theorem}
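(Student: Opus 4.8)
The plan is to prove the two implications separately, leaning throughout on one elementary rigidity fact: a surjection between finitely generated torsion-free nilpotent groups of equal Hirsch length is an isomorphism, since its kernel $N$ satisfies $h(N)=0$, hence is finite (f.g.\ nilpotent of Hirsch length $0$), hence trivial (a finite subgroup of a torsion-free group). Recall that for finitely generated abelian groups the Hirsch length coincides with the torsion-free $\rank$, and that Hirsch length is additive on short exact sequences of polycyclic groups.

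For $(1)\Rightarrow(2)$, let $\tilde G$ be the group with the displayed presentation. The relations force $c_1,\dots,c_m$ to be central, so $\tilde G'\subseteq\langle c_1,\dots,c_m\rangle$ and $\tilde G$ has nilpotency class $\le 2$; a collection process (push every $c_t$ to the right, then sort the $a_i$ using $a_ia_j=a_ja_i\prod_t c_t^{\lambda^{ij}_t}$) shows every element equals $a_1^{k_1}\cdots a_n^{k_n}c_1^{l_1}\cdots c_m^{l_m}$, exhibiting $\tilde G$ as a set-theoretic quotient of $\ZZ^{n+m}$. To rule out collapse I would build the explicit model $G_\beta=\ZZ^m\times_\sigma\ZZ^n$ with bilinear $2$-cocycle $\sigma(x,y)=\sum_{i<j}x_iy_j\lambda^{ij}$, so that its distinguished generators $A_i,C_t$ satisfy exactly the defining relations and $[A_i,A_j]=\prod_t C_t^{\lambda^{ij}_t}$; the universal property yields a surjection $\tilde G\to G_\beta$, and a direct computation shows that the composite of this map with the normal-form parametrization $\ZZ^{n+m}\to\tilde G$ is injective. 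Hence $\tilde G\cong G_\beta$, which is visibly finitely generated, torsion-free (a central extension of $\ZZ^n$ by $\ZZ^m$ has no torsion), of Hirsch length $n+m$, with $\langle c_t\rangle\cong\ZZ^m$ central and $\tilde G'\subseteq\langle c_t\rangle$. Applying additivity of Hirsch length to $1\to Z(\tilde G)\to\tilde G\to\tilde G/Z(\tilde G)\to1$ gives $\rank(\tilde G/Z(\tilde G))+\rank Z(\tilde G)=n+m$, while $\rank\tilde G'\le m\le\rank Z(\tilde G)$ is immediate.

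For $(2)\Rightarrow(1)$, the first step is structural: in a $2$-step nilpotent group commutators are central and bilinear, so $g^k\in Z(G)$ forces $[g,h]^k=[g^k,h]=e$ for all $h$, whence $[g,h]=e$ by torsion-freeness; thus $G/Z(G)$ is torsion-free, so $G/Z(G)\cong\ZZ^r$ with $r=\rank(G/Z(G))$, $Z(G)\cong\ZZ^s$ with $s=\rank Z(G)$, $G'\subseteq Z(G)$, and the hypotheses become $r+s=n+m$ and $r':=\rank G'\le m\le s$ (so also $r\le n$). The key construction is to choose generators economically: take a Smith-normal-form basis $z_1,\dots,z_s$ of $Z(G)$ adapted to $G'=\bigoplus_{i\le r'}d_i\ZZ z_i$, set $c_t=z_t$ for $1\le t\le m$, pick $a_1,\dots,a_r$ lifting a basis of $G/Z(G)$, and \emph{pad} with central elements $a_{r+1}=z_{m+1},\dots,a_n=z_s$ (there are exactly $n-r=s-m$ of them). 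One checks these generate $G$, that each $c_t$ is central so $[a_i,c_j]=e$, and that $[a_i,a_j]\in G'\subseteq\langle z_1,\dots,z_{r'}\rangle\subseteq\langle c_1,\dots,c_m\rangle$, producing integers $\lambda^{ij}_t$ with $[a_i,a_j]=\prod_t c_t^{\lambda^{ij}_t}$. Hence the group $\tilde G$ presented by these data surjects onto $G$; by part $(1)$ it is finitely generated torsion-free nilpotent of Hirsch length $n+m=\rank Z(G)+\rank(G/Z(G))=h(G)$, so the rigidity fact forces the surjection to be an isomorphism and $G$ has the required presentation.

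I expect the main obstacle to be the no-collapse step in $(1)\Rightarrow(2)$: confirming that the presentation genuinely defines a group of Hirsch length $n+m$ in which the $c_t$ are $\ZZ$-independent. This is precisely where a naive argument could go wrong, and the explicit cocycle model together with the injectivity check on normal forms is what makes it rigorous. By contrast, the bookkeeping in the $(2)\Rightarrow(1)$ generator count — ensuring $n-r=s-m\ge0$ and that padding with central elements keeps all relations in the prescribed commutator format — is routine once the rank hypotheses are in hand.
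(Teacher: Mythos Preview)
The paper does not prove this statement at all: it is quoted verbatim as \cite[Theorem 4.1]{nilp} and used as a black box in the next theorem, so there is no ``paper's own proof'' to compare against. Your proposal therefore cannot be matched to anything in the text.

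That said, your argument is a reasonable self-contained proof and the strategy is standard for results of this kind. A couple of points worth tightening. In $(1)\Rightarrow(2)$ you assert that the cocycle model $G_\beta$ together with the normal-form map gives an isomorphism; this is correct, but the cleanest way to phrase it is that the set $\ZZ^{n+m}$ with the twisted multiplication already \emph{is} a group satisfying the relations (so the universal map $\tilde G\to G_\beta$ exists), while the normal-form surjection $\ZZ^{n+m}\to\tilde G$ followed by this map is the identity on the underlying set, forcing both to be bijections. Also, the line ``a central extension of $\ZZ^n$ by $\ZZ^m$ has no torsion'' deserves one sentence (a torsion element maps to $0$ in $\ZZ^n$, hence lies in $\ZZ^m$, hence is trivial). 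In $(2)\Rightarrow(1)$ your generator count and padding are correct; the only thing to make explicit is that for the padded generators $a_{r+1},\dots,a_n$ (which are central) one takes $\lambda_t^{ij}=0$, so the commutator relations are still of the required shape.
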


\begin{theorem}
Let $G$ be a finitely generated, torsion-free, 2-step nilpotent group.\footnote{\SESTART The equality of (b)--(d) is generalized beyond the 2-step case in \cite{DadarlatComing} by  combining \cite{DadarlatNew}  with classical results on the rational cohomology of nilpotent groups.\SEEND}
The following are equivalent.
\begin{enumerate}[(a)]
	\item $\rank  (G/Z(G)) + \rank  Z(G) \le 1$.
	\item $G \cong \ZZ$ or $G$ is the trivial group.
	\item $G$ is \SP .
	\item $G$ is \MWSP .
\end{enumerate}
\end{theorem}
\begin{proof}
It follows from \cite[Theorem 1.2]{saj:gring} that $(a) \implies (b)$, and the implications $(b) \implies (c) \implies (d)$ are clear.
We will establish $(d) \implies (a)$ by proving the contrapositive.
To this end let $G$ be a finitely generated, torsion-free, 2-step nilpotent group and suppose that $\rank  (G/Z(G)) + \rank  Z(G) > 1$.
Pick $n,m$ such that $G$ has a presentation as in Theorem \ref{thm:nilppresentaion}.
To show that $G$ is not \MWSP\ we consider the cases $n=0$, $n=1$, and $n \geq 2$ individually.

If $n=0$ then $m \geq 2$.
By sending $c_1, c_2$ to the Voiculescu matrices and the other $c_j$'s to the identity matrix, we get an approximate representation which is not close to a representation.

If instead $n=1$ we must have $m \geq 1$.
Then we send $a_1, c_1$ to the Voiculescu matrices and the other $c_j$'s to the identity matrix to get an approximate representation which is not close to a representation.

So assume $n \ge 2$.
If all $\lambda_t^{ij}=0$, then we can send $a_1, a_2$ to the Voiculescu matrices and all other generators to the identity and again get an approximate representation which is not close to a representation.
So we can assume that there exist $t_0, i_0, j_0$ such that $\lambda_{t_0}^{i_0 j_0}\neq 0$.
For ease of notation put $M = \lambda_{t_0}^{i_0 j_0}$.

Let $l \in \NN$ be odd and let $\omega_l = e^{2\pi i/l}$.
We will build an approximate representation of $G$ that is not close to an actual representation.
Define
\[
	A_l = \diag(\lambda_1, \lambda_2, \ldots, \lambda_l),
\]
where $\lambda_k = (\overline \omega_l)^{M\frac{k(k+1)}{2}}$, $k = 1, 2, \ldots, l$, and define
\[
	B_l = \diag(\mu_1, \mu_2, \ldots, \mu_l),
\]
where $\mu_k = (\overline {\omega_l})^{k}$, $k = 1, 2, \ldots, l$.

We observe that
\[
	\mcomm{A_l}{S_l} = \diag(\lambda_1 \overline{\lambda_l}, \lambda_2 \overline{\lambda_1}, \ldots, \lambda_l \overline{\lambda_{l-1}}).
\]
Since $l$ is odd we see that
\[
	\lambda_1\overline{\lambda_l} = \overline{\omega_l}^{M(1-\frac{l(l+1)}{2})} = \overline{ \omega_l}^M,
\]
as we also have $\lambda_k \overline{\lambda_{k-1}} = \overline{\omega_l}^{Mk}$, for $2 \leq k \leq l$, we conclude that
\begin{equation} \label{nilp1}
	\mcomm{A_l}{S_l} = \diag(\overline{\omega_l}^M, \overline{\omega_l}^{2M}, \ldots, \overline{\omega_l}^{lM}) = B_l^M.
\end{equation}
Since $\mu_1 \overline{\mu_l} = \mu_k \mu_{k-1} = \overline{\omega_l},$ when $k>1$, we obtain also
\begin{align}
\begin{split}\label{nilp3}
	\|\mcomm{B_l}{S_l} - \myu_{l}\| &= \left\|\diag( \mu_{1} \overline{\mu_l}, \mu_2 \overline{\mu_1}, \ldots, \mu_{l}\overline{\mu_{l-1}})- \myu_{l} \right\| \\
	 &= \left\|\diag(\overline{\omega_l} - 1, \overline{\omega_l} - 1, \ldots, \overline{\omega_l} - 1)\right\| \to 0.
\end{split}
\end{align}
Finally, since $A_n$ and $B_n$ are diagonal, we also have
\begin{equation}\label{nilp2}
	\mcomm{ A_n}{ B_n} = 1.
\end{equation}

By (\ref{nilp1}), (\ref{nilp3}), (\ref{nilp2}) we can define an approximate representation $\pi_l$ of $G$ by
\begin{align*}
	\pi_l(a_i) &= \begin{cases}
					A_l, & i = i_0 \\
					S_l, & i = j_0 \\
					\myu_l, & i \neq i_0, j_0
				\end{cases}, \\
	\pi_l(c_t) &= \begin{cases}
					B_l, & t = t_0 \\
					\myu_l, & t \neq t_0
				\end{cases}.
\end{align*}
Since
\[
 \wind  \det (r\mcomm{ B_l}{ S_l} + (1-r)\myu_l) = \wind (1+r(\overline \omega_l -1)))^l =  -1,
\]
it follows from Theorem \ref{exelloring} that $\pi_l(a_{j_0})$ and $\pi_l(c_{t_0})$ are not close to any commuting matrices and hence this approximate representation is not close to a representation.
\end{proof}

\subsection{Surface groups}\label{Surface groups}

\begin{theorem} For each $g\in \mathbb N$ the surface group
\[
	G_g = \genrel{ a_1, a_2, \ldots a_g, b_1, b_2, \ldots, b_g }{ \mcomm{ a_1}{ b_1} \cdots\mcomm{ a_g}{b_g}=e }
\]
is not \MWSP.
\end{theorem}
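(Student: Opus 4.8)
The plan is to exhibit, for each $g \in \NN$, a sequence of finite-dimensional approximate representations of $G_g$ whose associated Exel--Loring winding number is nonzero, so that Theorem~\ref{exelloring} forbids them from being close to actual representations. The single relation $\mcomm{a_1}{b_1}\cdots\mcomm{a_g}{b_g}=e$ is already homogeneous in every generator, so Theorem~\ref{exelloring} applies directly once we have unitary matrices making it approximately $\myu$. The key point is that the left-hand side is a long product of multiplicative commutators, and on matrices a single almost-commuting pair of Voiculescu unitaries produces a scalar defect $\overline{\omega}_n$; the idea is to feed the Voiculescu pair into exactly one of the commutator slots and leave all the other generators equal to the identity.

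First I would set $\pi_n(a_1) = \Omega_n$, $\pi_n(b_1) = S_n$ (the Voiculescu matrices in $\matrM{n}$), and $\pi_n(a_i) = \pi_n(b_i) = \myu_n$ for $2 \le i \le g$. Then $\mcomm{\pi_n(a_i)}{\pi_n(b_i)} = \myu_n$ for $i \ge 2$, while $\mcomm{\Omega_n}{S_n} = \Omega_n S_n \Omega_n^{-1} S_n^{-1}$. A direct computation (the same one underlying Lemma~\ref{tensor} and used throughout Section~\ref{sec:classes}) gives $\Omega_n S_n \Omega_n^{-1} S_n^{-1} = \omega_n S_n \cdot \dots$ — more precisely $\Omega_n S_n \Omega_n^{-1} = \omega_n^{-1} S_n$ so that $\mcomm{\Omega_n}{S_n} = \omega_n^{-1}\myu_n$ (up to the convention for $\omega_n$; in any case it is a scalar multiple of the identity with the scalar tending to $1$). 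Hence the relation word evaluated on $\pi_n$ equals $\overline{\omega}_n \myu_n$ (or $\omega_n \myu_n$), which tends to $\myu_n$, so the $\pi_n$ form a genuine finite-dimensional approximate representation of $G_g$ — this verifies the hypothesis $\|R(\pi_n(a_1),\dots) - \myu_n\| < 1/2$ for $n$ large.

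Next I would compute the winding number. With $X_i, Y_i$ denoting the images under $\pi_n$, we have
\[
	\gamma^{X_1,Y_1,\dots,X_g,Y_g}(r) = \det\!\bigl(r\,\mcomm{X_1}{Y_1}\cdots\mcomm{X_g}{Y_g} + (1-r)\myu_n\bigr) = \det\!\bigl(r\,\overline{\omega}_n\myu_n + (1-r)\myu_n\bigr) = (r\overline{\omega}_n + 1 - r)^n.
\]
As $r$ runs over $[0,1]$, the scalar $r\overline{\omega}_n + (1-r)$ traces the chord from $1$ to $\overline{\omega}_n$ and back, so it does not wind around $0$; but raised to the $n$-th power this becomes a closed curve of winding number $\pm 1$ (the chord subtends angle $2\pi/n$ at... no — more carefully, $\wind(r\overline{\omega}_n + 1-r) $ as a path is $0$, yet its $n$-th power $\gamma$ is a loop, and one computes $\wind\gamma = -1$ exactly as in the displayed calculations for $\crystal{p2}$, $\crystal{p3}$ above, since the chord from $1$ to $e^{-2\pi i/n}$ lifts through angle $-2\pi/n$ under the continuous argument and multiplying the winding by $n$ gives $-1$). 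Thus $\wind\gamma^{X_1,Y_1,\dots,X_g,Y_g} = -1 \neq 0$, and Theorem~\ref{exelloring} shows these matrices are not within $\frac{1}{2L(R)}$ of any unitaries satisfying the relation, so $G_g$ is not \MWSP.

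The only subtlety — and the step I would be most careful about — is the bookkeeping of the winding number: making sure that the curve $r \mapsto (r\overline{\omega}_n + 1-r)^n$ genuinely has winding number $\pm 1$ (not $0$), which hinges on the chord from $1$ to $\overline{\omega}_n$ passing on the correct side of the origin for large $n$ and on tracking the continuous argument through the $n$-th power; this is handled exactly as in the proof of Theorem~\ref{thm:NonMWSPcrystals}, where curves of the form $\wind(r\overline{\omega}_n + 1-r)^{kn} = -k$ are evaluated, and here we are in the case $k=1$. Everything else — homogeneity of the relation, the Voiculescu commutator identity, and the fact that $g-1$ of the commutator slots contribute trivially — is routine.
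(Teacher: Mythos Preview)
Your approach is exactly the paper's: assign the Voiculescu pair to $(a_1,b_1)$, send all other generators to the identity, observe the relation word becomes a scalar $\omega_n^{\pm 1}\myu_n$, and compute the winding number of $(r\omega_n^{\pm 1}+1-r)^n$ to be $\pm 1$. The only discrepancy is a sign: from $\Omega_n S_n = \omega_n S_n \Omega_n$ one gets $\mcomm{\Omega_n}{S_n} = \omega_n\myu_n$, so the paper obtains $\wind = +1$ rather than your $-1$, but this is immaterial to the conclusion and you already flag the convention uncertainty.
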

\begin{proof}
Let $X_{1,n} = \Omega_n$, $Y_{1,n} = S_n$, $X_{i,n} = Y_{i,n} = \myu_n$, $i\neq 1$. Then
\[
	\|\mcomm{ X_{1,n}}{Y_{1,n}} \cdots\mcomm{ X_{g,n}}{ Y_{g,n}} - \myu_n\| \to 0.
\]
Since
\[
	\wind \det (r \mcomm{ X_{1,n}}{ Y_{1,n}} \cdots\mcomm{ X_{g,n}}{ Y_{g,n}}  + (1-r)\myu_n)  = \wind (r e^{\frac{2 \pi i}{n}} + 1 - r)^n = 1 ,
\]
we get by Theorem \ref{exelloring} that these matrices are not close to matrices exactly satisfying the group relation.\footnote{\SESTART An alternative proof of this fact is provided in \cite{DadarlatComing} based on \cite{DadarlatNew}. This approach  uses a result of Lubotzky and Shalom that surface groups have RFD $C^*$-algebra and the fact that surface groups have the Haagerup property.\SEEND}
\end{proof}


\subsection{Baumslag-Solitar groups}\label{Baumslag-Solitar groups}

We now turn our attention to the Baumslag-Solitar group.
For each pair of integers $n,m \in \ZZ$ the Baumslag-Solitar group $\baso(m,n)$ is defined as
\[
	\genrel{ a, b }{ ab^ma^{-1} = b^n }.
\]

\begin{theorem}
$\baso(1,-1)$ is matricially stable but not \WSP.
\end{theorem}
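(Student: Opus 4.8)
The plan is to reduce both halves of the statement to material already assembled above, after recording the standard identification $\baso(1,-1)\cong\crystal{pg}$. Indeed, putting $x=a$ and $y=b$, the Baumslag--Solitar relation $aba^{-1}=b^{-1}$ is literally $xy=y^{-1}x$, which is one of the presentations of $\crystal{pg}$ used earlier; so $\baso(1,-1)$ is the Klein bottle group, a $2$-dimensional crystallographic group containing $\ZZ^2$ as a normal subgroup of index $2$, and in particular a finitely generated virtually abelian group of rank exactly $2$.

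For matricial stability I would simply invoke Corollary \ref{cor:mwspcrystal} (equivalently the original computation of \cite{elp:sar} recalled in the examples), which already asserts that $\crystal{pg}$ is \MWSP. Nothing new is needed here.

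For the failure of weak $C^*$-stability I would appeal to Theorem \ref{VirtAbWSP}: a finitely generated virtually abelian group is weakly $C^*$-stable if and only if its rank is at most $1$; since $\baso(1,-1)\cong\crystal{pg}$ has rank $2$, it is not weakly $C^*$-stable. If one prefers a self-contained argument, one can instead unwind the proof of the implication (ii)$\Rightarrow$(iii) of Theorem \ref{VirtAbWSP} in this concrete instance: apply Theorem \ref{MapToScalarFunctions} with $m=2$ and $N=2$ to get $\alpha\colon C^*(\crystal{pg})\to C(\II^2,\matrM{2})$ whose restriction to the copy of $C^*(\ZZ^2)$ surjects onto the scalar functions, restrict further to a disc $\mathbb D\subseteq\II^2$, compose with the $*$-homomorphism $C(\mathbb D)\to\prod\mathbf{B}(H)/\bigoplus\mathbf{B}(H)$ built from the almost-normal weighted shifts $T_n$ of Lemma \ref{Fredholm}, and observe that any lift would yield normal operators approximating $T_n\otimes\myu_2$ in essential norm, contradicting the Fredholm-index obstruction of Lemma \ref{Fredholm}.

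I do not anticipate a genuine obstacle: all of the hard input — the Voiculescu/Exel--Loring obstruction underlying matricial stability and the Fredholm-index obstruction feeding Theorem \ref{VirtAbWSP} — is already in place. The only things to check are the presentation-level identification $\baso(1,-1)\cong\crystal{pg}$ and the elementary fact that this group has rank exactly $2$, so that the rank hypothesis of Theorem \ref{VirtAbWSP} (or the explicit construction above) genuinely applies.
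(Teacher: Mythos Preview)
Your proposal is correct and follows essentially the same approach as the paper: identify $\baso(1,-1)\cong\crystal{pg}$, invoke Corollary~\ref{cor:mwspcrystal} for matricial stability, and invoke Theorem~\ref{VirtAbWSP} (rank $2$) for the failure of weak $C^*$-stability. The paper's proof is briefer and omits your optional unwound Fredholm-index argument, but the logic is identical.
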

\begin{proof}
Since $\baso(1,-1)$ is isomorphic to  the crystallographic group
 \[
 \crystal{pg}= \genrel{ x,y }{ xy=y^{-1}x } = \genrel{ p,q }{ p^2=q^2 }
 \]
(via $p=x,q=xy$), which is virtually abelian of rank 2, by Theorem \ref{VirtAbWSP} $\baso(1,-1)$
is not weakly $C^*$-stable.
By Corollary \ref{cor:mwspcrystal} it is matricially stable.
\end{proof}

\begin{theorem}\label{Baumslag-SolitarB(m, m)}
For any integer $m$ the Baumslag-Solitar group $\baso(m, m)$ is not \MWSP.
\end{theorem}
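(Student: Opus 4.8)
The plan is to make the Exel--Loring type invariant of Section \ref{sec:ExelLoring} applicable to $\baso(m,m)$ and then produce an approximate representation with nonzero winding number. We may assume $m\neq 0$, since $\baso(0,0)\cong\FF_2$ is \SP. The key observation is that, with $n=m$, the defining relation $ab^ma^{-1}=b^m$ becomes
\[
	R(a,b)=ab^ma^{-1}b^{-m}=e,
\]
and $R$ is \emph{homogeneous}: the exponents of $a$ sum to $1+(-1)=0$ and those of $b$ to $m+(-m)=0$, so $L(R)=2|m|+2$ and Theorem \ref{exelloring} applies to $R$.

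Next I would build matrices from the Voiculescu matrices. Put $A_n=S_n$, and let $B_n=\diag(\zeta,\zeta^2,\ldots,\zeta^n)\in\matrM{n}$ with $\zeta=\exp(2\pi i/(mn))$ a unimodular scalar; the point of this choice is that $B_n^m=\diag(\omega_n,\omega_n^2,\ldots,\omega_n^n)=\Omega_n$ exactly. Using the easy identity $S_n\Omega_nS_n^{-1}=\overline{\omega}_n\Omega_n$, one computes
\[
	R(A_n,B_n)=S_n\Omega_nS_n^{-1}\Omega_n^{-1}=\mcomm{S_n}{\Omega_n}=\overline{\omega}_n\myu_n,
\]
so $\|R(A_n,B_n)-\myu_n\|=|\overline{\omega}_n-1|\to 0$ as $n\to\infty$; hence $a\mapsto A_n$, $b\mapsto B_n$ defines an approximate representation of $\baso(m,m)$.

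Then I would compute the winding number. Since $R(A_n,B_n)$ is the scalar $\overline{\omega}_n$,
\[
	\gamma^{A_n,B_n}(r)=\det\!\big(r\,R(A_n,B_n)+(1-r)\myu_n\big)=(r\overline{\omega}_n+1-r)^n,
\]
and for $n$ large the chord $r\mapsto r\overline{\omega}_n+1-r$ from $1$ to $\overline{\omega}_n$ stays in the open right half-plane, along which $\arg$ changes by $-2\pi/n$; raising to the $n$-th power multiplies this by $n$, so $\wind\gamma^{A_n,B_n}=-1\neq 0$. For $n$ large we also have $\|R(A_n,B_n)-\myu_n\|<1/2$, so Theorem \ref{exelloring} forbids unitaries within $1/(2L(R))=1/(4|m|+4)$ of $(A_n,B_n)$ satisfying $R=e$ exactly. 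Thus this approximate representation is not close to any genuine representation of $\baso(m,m)$, and $\baso(m,m)$ is not \MWSP.

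I do not anticipate a real obstacle here: once the relation is recognised as homogeneous and one exploits that $\Omega_n$ has the evident diagonal $m$-th root $B_n$, the computation is a transcription of the $\crystal{p1}=\ZZ^2$ argument — indeed for $|m|=1$, where $\baso(m,m)\cong\ZZ^2$, it literally is Voiculescu's example. The only care needed is to arrange $B_n^m=\Omega_n$ on the nose so that the relator equals the scalar $\overline{\omega}_n$ exactly, and to note that $\zeta$ stays unimodular when $m<0$.
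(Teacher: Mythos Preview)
Your proof is correct and follows the same route as the paper: recognise the relator $ab^ma^{-1}b^{-m}$ as homogeneous, feed in Voiculescu-type matrices to get an approximate representation whose relator is a nontrivial scalar, and invoke Theorem~\ref{exelloring} via the resulting nonzero winding number. The only cosmetic difference is that the paper sends $b\mapsto\Omega_n$ directly (so the relator evaluates to $\omega_n^{m}\myu_n$ with winding number $m$) rather than constructing a diagonal $m$-th root of $\Omega_n$; your extra step is harmless but unnecessary.
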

\begin{proof}  Let $\omega_n = e^{2\pi i/n}$ and $\Omega_n, S_n\in \matrM{n}$ be the Voiculescu matrices.
Then
\[
	\Omega_n^mS_n\Omega_n^{-m}S_n^{-1} = \diag( \omega_n^m, \omega_n^m, \ldots, \omega_n^m )
\]
and
\[
	\|\Omega_n^mS_n\Omega_n^{-m}S_n^{-1} - \myu_n\| = |e^{2\pi im/n}-1| \to 0.
\]
We have
\begin{multline*}
	\wind \det (r\Omega_n^mS_n\Omega_n^{-m}S_n^{-1} + (1-r)\myu) = \wind (1+r(\omega_n^m-1)))^n =  m.\end{multline*}

Thus by Theorem \ref{exelloring} the distance from $\Omega_n, S_n$ to any pair of unitaries satisfying the group relation does not tend to zero.
\end{proof}

If $|n|,|m| > 1$ and $|n| \neq |m|$ then $\baso(n,m)$ is not RF by \cite[Theorem B]{sm:nrforg}.
Hence, if $\baso(n,m)$ is MF it would follow from Proposition \ref{prop:MFisNice} that it not \MWSP.
Despite not knowing if $\baso(n,m)$ is MF, we can still prove that in the case $n=2, m=3$ the group is not \MWSP.

\begin{theorem}\label{BS(2,3)}  $\baso(2,3)$ is not matricially stable.
\end{theorem}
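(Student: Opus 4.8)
Since the defining relator $ab^2a^{-1}b^{-3}$ has nonzero exponent sum in $b$, it is not homogeneous and Theorem~\ref{exelloring} does not apply to it directly; as in the proofs for $\crystal{p2}$, $\crystal{p3}$, $\crystal{p4}$, $\crystal{p6}$ and for $\baso(m,m)$, the first step is to replace it by a homogeneous relator that is a consequence of it. In $\baso(2,3)$ one has $\mcomm{a}{b^2}=ab^2a^{-1}b^{-2}=b$ and, writing $c=aba^{-1}$, also $c^2=ab^2a^{-1}=b^3$ and $\mcomm{a}{b^3}=ab^3a^{-1}b^{-3}=c^3b^{-3}=c$; hence both $\mcomm{a}{b^2}^3$ and $\mcomm{a}{b^3}^2$ equal $b^3$, so the word
\[
W(a,b)=\bigl(ab^2a^{-1}b^{-2}\bigr)^{3}\bigl(ab^3a^{-1}b^{-3}\bigr)^{-2}
\]
is homogeneous ($a$- and $b$-exponent sums both vanish) and satisfies $W(a,b)=e$ in $\baso(2,3)$. (The shorter alternative $V(a,b)=(ab^2a^{-1}b^{-2})(a^{-1}b^3ab^{-3})=\mcomm{a}{b^2}\mcomm{a^{-1}}{b^3}$, which is $e$ because $\mcomm{a^{-1}}{b^3}=b^{-1}$, is equally usable, and has smaller $L$.)

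\textbf{Second step: an approximate representation with a nonzero winding number.} I would build, for each $n$, unitary matrices $A_n,B_n$ that form a finite-dimensional approximate representation of $\baso(2,3)$, i.e. $\|A_nB_n^2A_n^{-1}-B_n^3\|\to 0$ (equivalently $\|\mcomm{A_n}{B_n^2}-\mcomm{B_n}{e}\|$-type control), chosen so that $\wind\gamma^{A_n,B_n}\neq 0$ for the relator $W$. Note this is \emph{not} obstructed by the fact that $W$ is close to $\myu$ on any approximate representation (it always is): exactly as for $\crystal{p2}$, where $(T_{n,1}T_{n,2}T_{n,3})^2=\overline{\omega}_n\myu_{2n}\to\myu$ yet $\wind=-2$, or for $\baso(m,m)$, where $\Omega_n^mS_n\Omega_n^{-m}S_n^{-1}=\omega_n^m\myu_n\to\myu$ yet $\wind=m$, one wants $W(A_n,B_n)$ to be (close to) a \emph{nontrivial} scalar $\lambda_n\myu$ with $\lambda_n\to 1$, and then $\wind\gamma^{A_n,B_n}=\wind(r\lambda_n+1-r)^{\dim}\neq 0$. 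The Voiculescu matrices should enter here; the essential point is that, unlike $\baso(m,m)$, the relation forces the spectra of $B_n^2$ and $B_n^3$ to nearly coincide, so the building blocks $B_n$ should be taken with spectrum spread over (roots of unity of an order divisible by $6$, or otherwise) so that $\det B_n$ is a nontrivial root of unity tending to $1$, while $A_n$ is chosen to realize the spectral near-matching; the crucial phenomenon to exploit is that $b$ and $aba^{-1}$ need not commute in a representation, which is exactly what makes $W$ evaluate to a \emph{nontrivial} scalar rather than to $\myu$.

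\textbf{Conclusion and main obstacle.} Once such $A_n,B_n$ are in hand, Theorem~\ref{exelloring} (which applies since $W$ is homogeneous and $\|W(A_n,B_n)-\myu\|<1/2$ for $n$ large) shows that $(A_n,B_n)$ cannot be within $\tfrac{1}{2L(W)}$ of any pair of unitaries satisfying $W=\myu$; since every finite-dimensional representation $\pi$ of $\baso(2,3)$ satisfies $W(\pi(a),\pi(b))=\myu$, the approximate representation $(A_n,B_n)$ is not close to any representation, so by Proposition~\ref{prop:mwspandalmostreps} the group $\baso(2,3)$ is not matricially stable. The hard part, precisely as in the crystallographic cases, is the explicit construction of $A_n$ and $B_n$: one must reconcile $A_nB_n^2A_n^{-1}\approx B_n^3$ — which severely constrains how $A_n$ may differ from a permutation intertwining the spectra of $B_n^2$ and $B_n^3$, and hence tends to make $\mcomm{B_n}{A_nB_nA_n^{-1}}$ small, collapsing $W$ to $\myu$ with zero winding — with producing a nonzero winding number. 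Resolving this tension is where genuinely noncommuting (Voiculescu-type) building blocks are needed, and carrying out the determinant/winding computation is the technical heart of the proof.
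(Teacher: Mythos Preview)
Your proposal has a genuine gap: the construction of $(A_n,B_n)$ is the whole proof, and you have not carried it out. You correctly identify the obstruction yourself --- the constraint $A_nB_n^2A_n^{-1}\approx B_n^3$ forces $A_n$ to act essentially as a spectral permutation for $B_n$, which tends to collapse $W(A_n,B_n)$ to $\myu$ with zero winding --- but you offer no mechanism for overcoming it. The analogy with $\crystal{p2}$ and $\baso(m,m)$ is not as tight as you suggest: there the relations impose only finite-order or commutation constraints that leave room for Voiculescu-type phase accumulation, whereas $ab^2a^{-1}=b^3$ rigidly couples the spectrum of $B_n^2$ to that of $B_n^3$ via conjugation by $A_n$. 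Without an explicit pair $(A_n,B_n)$ and a verified computation of $\wind\gamma^{A_n,B_n}\neq 0$, there is no proof here, only a strategy whose main difficulty you have flagged but not resolved.

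The paper takes a completely different route and does not use Theorem~\ref{exelloring} at all. It exploits the fact (from \cite{IrrRepr}) that every finite-dimensional representation of $\baso(2,3)$ sends $h_0=\mcomm{aba^{-1}}{b}$ to the identity; note that $h_0\neq e$ in $\baso(2,3)$ itself, so this is a manifestation of non-residual-finiteness rather than a group relation, and your homogeneous-relator framework cannot see it. Starting from R\u{a}dulescu's approximate representation $a\mapsto v_n$, $b\mapsto u_n$ in $\matrM{6n}$ --- about which the authors explicitly say they do \emph{not} know whether it is close to a representation --- they postcompose $v_n$ with a fixed $2\times 2$ rotation $\tilde v_n$ supported on $\Span\{e_0,e_{3n}\}$ (which commutes with $u_n^2$, so the result is still an approximate representation) and compute directly that the modified approximate representation $\pi_n$ satisfies $\|\pi_n(h_0)-\myu_{6n}\|\geq|e^{4\pi i/3}-1|$ for all $n$. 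Since every genuine finite-dimensional representation kills $h_0$, no representation can be close to $\pi_n$. The obstruction is thus not topological but comes from the gap between $\baso(2,3)$ and its image under finite-dimensional representations.
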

\begin{proof} We start with an almost representation of $\baso(2,3)$ constructed in \cite{Radulescu}. We describe it here. Let $n\in \mathbb N$ and
$e_0, \ldots, e_{6n-1}$ be a basis in $\mathbb C^{6n}$.
Define $u_n, v_n\in \matrM{6n}$ by
\[
	u_ne_k = e^{2\pi i k/6n} e_k,
\]
for $k= 1, \ldots, 6n-1$ and
\begin{align*}
 v_ne_{2k} &= e_{3k} \\
 v_ne_{2k+1} &= e_{3k+1} \\
 v_ne_{2n+2k} &= e_{3k+2} \\
 v_ne_{2n+2k+1} &= e_{3n+3k+2} \\
 v_ne_{4n+2k} &= e_{3n+3k} \\
 v_ne_{4n+2k+1} &= e_{3n+3k+1},
\end{align*}
for $k = 0, \ldots, n-1.$
Since
\begin{align*}
	\{0, \ldots, 6n-1\} &= \bigsqcup_{k=0}^{n-1} \{2k, 2k+1, 2n+2k,2n+2k+1,4n+2k, 4n+2k+1\} \\
		&= \bigsqcup_{k=0}^{n-1} \{3k, 3k+1, 3k+2, 3n+3k, 3n+3k+1, 3n+3k+2\},
\end{align*}
$v_n$ is well-defined.
It was shown in \cite{Radulescu} that
\[
 	a\mapsto v_n, b\mapsto u_n
\]
is an almost representation of $\baso(2,3)$. We don't know if this almost representation is close to a representation. But we will construct another almost representation out of this one, which, as we will see, is not close to a representation.

Define $\tilde v_n\in \matrM{6n}$ by
\begin{align*}
	\tilde v_ne_0 &= \frac{1}{\sqrt 2}(e_0-e_{3n}),  \\
	\tilde v_ne_{3n} &= \frac{1}{\sqrt 2}(e_0+e_{3n}), \\
	\tilde v_n e_k &= e_k, \quad k \neq 0,3n.
\end{align*}
  Since $u_n^2$ is identity on $\Span\{e_0, e_{3n}\}$,
  \[
  	\acomm{\tilde v_n}{ u_n^2}=0.
  \]
  Clearly it follows that
  \[
  	\pi_n\colon a\mapsto \tilde v_n v_n, \;\; b\mapsto u_n
  \]
  is an almost representation. We claim that it is not close to any actual representation.

  In \cite{IrrRepr} all irreducible finite-dimensional representations of $\baso(2,3)$ are described (as well as those of some other Baumslag-Solitar groups). In particular it was shown that for any irreducible finite-dimensional  representation, there is a basis in which  the image of $a$ is a shift matrix and the image of $b$ is a diagonal matrix. It follows that all finite-dimensional irreducible representations send the multiplicative commutator $\mcomm{aba^{-1}}{b}$ to the identity.  Since any finite-dimensional representation is direct sum of irreducible ones, it follows that any finite-dimensional representation sends
   $\mcomm{aba^{-1}}{b}$ to the identity operator. Thus it is sufficient to prove that $\|\mcomm{\pi_n(aba^{-1})}{ \pi_n(b)} - 1_{6n}\|$   is far from zero. We calculate
 \[
	v_nu_nv_n^{-1}e_0 = e_0, \;\; v_nu_nv_n^{-1}e_{3n} = \lambda e_{3n},
\]
where $\lambda = e^{4\pi i/3}.$
Hence the restriction of $\acomm{\pi_n(aba^{-1})}{ \pi_n(b)}$ on $\Span\{e_0, e_{3n}\}$ is
  \begin{align*}
	\acomm{\pi_n(aba^{-1})}{ \pi_n(b)}&|_{\Span\{e_0, e_{3n}\}} \\
	&= \acomm{\tilde v_nv_nu_nv_n^{-1}\tilde v_n^{-1}}{ u_n}|_{\Span\{e_0, e_{3n}\}}  \\
	&=
  1/2\acomm{\begin{pmatrix}1& 1\\-1 & 1\end{pmatrix} \begin{pmatrix}1& 0\\0 & \lambda\end{pmatrix}\begin{pmatrix} 1& -1\\1 & 1\end{pmatrix}}{ \begin{pmatrix}1& 0\\0 & -1\end{pmatrix}} \\
  &= \begin{pmatrix}0& 1-\lambda\\\lambda-1 & 0\end{pmatrix}.
  \end{align*}
   Thus
   \[
   \|\mcomm{\pi_n(aba^{-1})}{ \pi_n(b)} - 1_{6n}\| = \|\acomm{\pi_n(aba^{-1})}{ \pi_n(b)}]\| \geq  |\lambda - 1| = |e^{4\pi i/3}-1|,
   \]
   for all $n$.
\end{proof}

Below we will strengthen the last result. In general matricial stability does not pass to subgroups of finite index as one can see by looking at virtually abelian groups.
However it is not possible for a \MWSP\ group to contain $\baso(2,3)$ as a finite index subgroup.
To prove this we introduce induced approximate representations.

Let $H$ be a finite index subgroup of $G$ and let $\pi_n\colon H \to \mathcal U(\matrM{k(n)})$ be an approximate representation of $H$. Let $N = [G:H]$. Let us denote for short $V_n = \mathbb C^{k(n)}$ and for each $i = 1, \ldots, N$ let $g_iV_n$ be an isomorphic copy of $V_n$. For any vector $x\in V_n$ the same vector in $g_iV_n$ we will write as $g_ix$. In particular it implies that $\|\sum_{i=1}^N g_ix_i\| = \sqrt{\sum_{i=1}^N \|x_i\|^2}.$ Let $g_1, \ldots, g_N$ be a full set of representatives in $G$ of the left cosets in $G/H$. Then for each $g\in G$ and each $g_i$ there is an $h_i$ in $H$ and $j(i)$ in $1,\ldots, N$ such that $gg_i = g_{j(i)}h$. We define the {\it induced approximate representation} $\Ind \pi_n\colon G \to \mathcal U(\matrM{Nk(n)})$ analogously to the notion of an induced representation:
\[
	(\operatorname{Ind} \pi_n)(g) \sum_{i=1}^N g_ix_i = \sum_{i=1}^N  g_{j(i)}\pi_n(h_i)x_i,
\]
where $x_i\in V$ for each $i$.

\begin{proposition}
Let $G$ be a group with a finite index subgroup $H$.
If $\pi_n$ is an approximate representation of $H$, then the induced approximate representation $\Ind \pi_n$, constructed above, is in fact an approximate representation of $G$.
\end{proposition}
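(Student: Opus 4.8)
The plan is to verify directly the two requirements implicit in the conclusion: that each $\Ind\pi_n(g)$ is unitary, and that $\Ind\pi_n$ is asymptotically multiplicative. For the first point, I would observe that $\Ind\pi_n(g)$ carries the block $g_iV_n$ isometrically onto $g_{j(i)}V_n$ via the unitary $\pi_n(h_i)$, where $gg_i=g_{j(i)}h_i$, and that $i\mapsto j(i)$ is a bijection of $\{1,\dots,N\}$ because left translation by $g$ permutes the left cosets. Hence $\Ind\pi_n(g)$ is a block-permutation operator whose nonzero blocks are unitaries, so it is unitary.

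For multiplicativity, fix $g,g'\in G$ and write $gg_i=g_{\sigma_g(i)}h(g,i)$ and $g'g_i=g_{\sigma_{g'}(i)}h(g',i)$, with $\sigma_g,\sigma_{g'}$ permutations of $\{1,\dots,N\}$ and $h(g,i),h(g',i)\in H$. Expanding $(gg')g_i=g(g'g_i)$ in two ways and using uniqueness of the decomposition $G=\bigsqcup_i g_iH$ yields the cocycle identities $\sigma_{gg'}=\sigma_g\circ\sigma_{g'}$ and $h(gg',i)=h(g,\sigma_{g'}(i))\,h(g',i)$. Substituting these into the definition of $\Ind$, a short computation gives, for every $x\in V_n$,
\[
	\bigl(\Ind\pi_n(gg')-\Ind\pi_n(g)\Ind\pi_n(g')\bigr)g_ix = g_{\sigma_g(\sigma_{g'}(i))}\,D_i(g,g')\,x,
\]
where $D_i(g,g')=\pi_n\bigl(h(g,\sigma_{g'}(i))h(g',i)\bigr)-\pi_n\bigl(h(g,\sigma_{g'}(i))\bigr)\pi_n\bigl(h(g',i)\bigr)$. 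Thus $\Ind\pi_n(gg')-\Ind\pi_n(g)\Ind\pi_n(g')$ is again supported on a single block in each row and column, and since a block-permutation-supported operator has norm equal to the largest of the norms of its blocks,
\[
	\bigl\|\Ind\pi_n(gg')-\Ind\pi_n(g)\Ind\pi_n(g')\bigr\| = \max_{1\le i\le N}\bigl\|D_i(g,g')\bigr\|.
\]

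It then remains to see this maximum tends to $0$. The key observation is that for each fixed $g,g'$ the $H$-elements $h(g,\sigma_{g'}(i))$ and $h(g',i)$ depend only on $i$, not on $n$, so as $i$ runs over $\{1,\dots,N\}$ only finitely many pairs of elements of $H$ occur in the expressions $D_i(g,g')$. Applying the defining property of the approximate homomorphism $\pi_n\colon H\to\mathcal U(\matrM{k(n)})$ to each of these finitely many pairs gives $\|D_i(g,g')\|\to0$ for every $i$, hence $\max_i\|D_i(g,g')\|\to0$, which is exactly the claim. The only place where care is needed — and the nearest thing to an obstacle — is precisely this passage from the pointwise (fixed‑pair) convergence in the definition of an approximate homomorphism to uniformity over the index $i$; it is harmless only because $N=[G:H]$ is finite, which is where the finite-index hypothesis enters.
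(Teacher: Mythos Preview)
Your proposal is correct and follows essentially the same approach as the paper: both compute the action of $\Ind\pi_n(gg')-\Ind\pi_n(g)\Ind\pi_n(g')$ on a single block $g_iV_n$, observe it lands in a single block with operator $\pi_n(h'h)-\pi_n(h')\pi_n(h)$, and use injectivity of $i\mapsto m(i)$ together with the approximate-homomorphism property of $\pi_n$ to conclude. Your write-up is in fact more careful than the paper's in two places: you explicitly verify unitarity of $\Ind\pi_n(g)$, and you make precise that the final bound is $\max_i\|D_i(g,g')\|$ over the finitely many indices $i$, whereas the paper's last displayed line suppresses this maximum.
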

\begin{proof} Let $g, g'\in G$. Then for any $g_i$ there is $h = h(i)\in H$ and $l=l(i)$ such that
\[
	g'g_i = g_lh.
\]
Now we find $h'=h'(i)\in H$ and $m=m(i)$ such that
\[
	gg_l = g_mh'.
\]
Then
\[
	gg'g_i = gg_lh = g_mh'h.
\]
It is straightforward to check that $m(i)\neq m(j)$ when $i\neq j$. For any vector $g_ix\in g_iV_n$ we have
\begin{multline*}
\left((\Ind \pi_n)(g)(\Ind \pi_n)(g')- (\Ind \pi_n)(gg')\right)g_ix \\
 = (\Ind \pi_n)(g)(g_l\pi_n(h)x) - g_m\pi_n(hh')x = g_m \left(\pi_n(h')\pi_n(h) - \pi_n(h'h)\right)x.
 \end{multline*}
Now for any vector $\sum_{i=1}^Ng_ix_i\in \bigoplus_{i=1}^N g_iV_n$ we obtain
\begin{align*}
 \| (\Ind \pi_n)&(gg') - (\Ind \pi_n)(g)(\Ind \pi_n)(g') \| \\
 &= \sup_{\|\sum_{i=1}^N g_ix_i\|\le 1} \left\|\sum_{i=1}^N \left((\Ind \pi_n)(gg') - (\Ind \pi_n)(g)(\Ind \pi_n)(g')\right)g_ix_i \right\| \\
 &= \sup_{\|\sum_{i=1}^N g_ix_i\|\le 1} \left\|\sum_{i=1}^N g_{m(i)}\left(\pi_n(h')\pi_n(h) - \pi_n(h'h)\right)x_i\right\| \\
 &= \sup_{\|\sum_{i=1}^N g_ix_i\|\le 1} \sqrt{\sum_{i=1}^N \left\|g_{m(i)}\left(\pi_n(h')\pi_n(h) - \pi_n(h'h)\right)x_i\right\|^2} \\
 &\le  \left\|\left(\pi_n(h')\pi_n(h) - \pi_n(h'h)\right)\right\|\to 0.
\end{align*}
as $n\to \infty$.
\end{proof}

We saw above that some groups having $\ZZ^2=\baso(1,1)$ as a finite index subgroup could be \MWSP\ even though $\ZZ^2$ is not. For $\baso(2,3)$ this is not possible:

\begin{theorem} \label{thm:bssubgroup}
If a group $G$ contains $\baso(2,3)$ as a subgroup of finite index then $G$ is not matricially stable.
\end{theorem}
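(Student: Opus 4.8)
The plan is to push the bad approximate representation of $\baso(2,3)$ from Theorem~\ref{BS(2,3)} up to $G$ via the induced-approximate-representation construction, and then to check that the multiplicative commutator obstruction used there survives induction. Write $H\cong\baso(2,3)$ for the finite-index subgroup, of index $N$ say, and let $a,b\in H$ be the standard generators, so $H=\genrel{a,b}{ab^2a^{-1}=b^3}$. Let $\pi_n\colon H\to\mathcal U(\matrM{6n})$ be the approximate representation of $\baso(2,3)$ built in the proof of Theorem~\ref{BS(2,3)}, which satisfies $\|\mcomm{\pi_n(aba^{-1})}{\pi_n(b)}-\myu_{6n}\|\ge|e^{4\pi i/3}-1|$ for all $n$. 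Setting $\Phi_n=\Ind\pi_n\colon G\to\mathcal U(\matrM{6nN})$, the Proposition above tells us that $\Phi_n$ is an approximate representation of $G$.

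The crucial step --- and the only one that is not completely routine --- is to see that the restriction of $\Phi_n$ to $H$ still contains $\pi_n$ verbatim, despite $\Phi_n$ not being a genuine homomorphism. Fixing coset representatives $g_1=e,g_2,\dots,g_N$ and writing $\CC^{6nN}=\bigoplus_{i=1}^N g_iV_n$ with $V_n=\CC^{6n}$, I would argue straight from the defining formula: for $h\in H$ we have $hg_1=g_1h$, so in the notation of the construction $j(1)=1$ and $h_1=h$, whence $\Phi_n(h)$ maps the block $g_1V_n$ into itself and acts there exactly as $\pi_n(h)$; moreover $hg_i\notin H$ for $i\ne1$ forces $j(i)\ne1$, so $\Phi_n(h)$ (being unitary) also preserves the orthogonal complement of $g_1V_n$, and consequently $\Phi_n(h)^{-1}$ preserves $g_1V_n$ and restricts there to $\pi_n(h)^{-1}$. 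Since $aba^{-1},b\in H$, all four factors of the matrix commutator $\mcomm{\Phi_n(aba^{-1})}{\Phi_n(b)}$ preserve $g_1V_n$, so this commutator preserves $g_1V_n$ and restricts there to $\mcomm{\pi_n(aba^{-1})}{\pi_n(b)}$. As restriction to an invariant subspace never increases norms, this yields $\|\mcomm{\Phi_n(aba^{-1})}{\Phi_n(b)}-\myu_{6nN}\|\ge|e^{4\pi i/3}-1|$ for all $n$.

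To conclude, suppose $G$ were matricially stable. Then, via the reformulation of matricial stability in terms of approximate representations (unwinding Definition~\ref{matrixdef}: $\Phi_n$ induces a $*$-homomorphism $C^*(G)\to\prod\matrM{6nN}/\bigoplus\matrM{6nN}$ which then lifts, and the coordinates of a lift give the desired maps), there would be genuine finite-dimensional representations $R_n\colon G\to\mathcal U(\matrM{6nN})$ with $\|\Phi_n(g)-R_n(g)\|\to0$ for every $g\in G$. But $R_n|_H$ is a finite-dimensional representation of $\baso(2,3)$, and by the classification of such representations from \cite{IrrRepr} recalled in the proof of Theorem~\ref{BS(2,3)}, every finite-dimensional representation of $\baso(2,3)$ sends $\mcomm{aba^{-1}}{b}$ to the identity; thus $\mcomm{R_n(aba^{-1})}{R_n(b)}=\myu_{6nN}$. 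Norm-continuity of multiplication and inversion on unitaries then gives $\mcomm{\Phi_n(aba^{-1})}{\Phi_n(b)}-\mcomm{R_n(aba^{-1})}{R_n(b)}\to0$, so $\mcomm{\Phi_n(aba^{-1})}{\Phi_n(b)}\to\myu$, contradicting the uniform lower bound $|e^{4\pi i/3}-1|>0$. Hence $G$ is not matricially stable.

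I expect the main obstacle to be nothing more than carefully justifying the verbatim-restriction observation in the middle paragraph: because $\Ind\pi_n$ is only approximately multiplicative, one cannot invoke any functoriality of ``restriction of an induced representation'' and must instead read off the behaviour on the block $g_1V_n$ from the explicit formula. The remaining ingredients --- that matricial stability lets us replace $\Phi_n$ by honest representations $R_n$, and the final continuity argument --- are standard and parallel the end of the proof of Theorem~\ref{BS(2,3)}.
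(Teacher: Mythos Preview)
Your argument is correct and follows the same route as the paper: induce the bad approximate representation of $\baso(2,3)$ to $G$, observe that the first coset block $g_1V_n$ is invariant and carries $\pi_n$ verbatim, and use the fact that every finite-dimensional representation of $\baso(2,3)$ kills $\mcomm{aba^{-1}}{b}$ to derive a contradiction. Your treatment is in fact slightly more careful than the paper's, in that you work with the matrix commutator $\mcomm{\Phi_n(aba^{-1})}{\Phi_n(b)}$ directly and explicitly verify that $\Phi_n(h)^{-1}$ also preserves the block (a point the paper takes for granted by writing $(\Ind\pi_n)(h_0)$ for the group element $h_0$).
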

\begin{proof} In the proof of Theorem \ref{BS(2,3)} we constructed an approximate representation $\pi_n$ of $H=\baso(2,3)$ such that on the element $h_0 = \mcomm{aba^{-1}}{b}\in H$
\[
	\|\pi_n(h_0) - 1_{6n}\| \geq  |e^{4\pi i/3}-1|,
\]
for all $n$.
It follows from the definition of an induced approximate representation that the subspace $V_n$ of the space $\sum_{i=1}^N g_iV_n$ of the induced representation $\Ind \pi_n$ is invariant under $(\Ind \pi_n)(h)$, for all $h\in H$, and
\[
	(\Ind \pi_n)(h)\;|_{V_n} = \pi_n(h),
\]
for all $h\in H$. It implies that
\[
	\|(\Ind \pi_n)(h_0)- 1_{6nN}\| \ge \|\pi_n(h_0)- 1_{6n}\| \geq  |e^{4\pi i/3}-1|,
\]
for all $n$. On the other hand, as explained in the proof of Theorem \ref{BS(2,3)}, any representation of $H$ is identity on $h_0$. In particular for any representation $\rho$ of $G$, $\rho\;|_H(h_0)$ is identity. Hence $\Ind \pi_n$ is not close to any  representation of $G$.
\end{proof}

\begin{remark}
By examining the proof of Theorem \ref{thm:bssubgroup} we see that what it needs is an element $h_0 \in \baso(2,3)$ such that
\begin{enumerate}[(a)]
	\item \label{enum:bsh} $h_0$ is mapped to $\myu$ in any finite dimensional representation, and
	\item there is an approximate representation $\pi_n$ such that $\pi_n(h)$ is always far from $\myu$.
\end{enumerate}
We induce $\pi_n$ to an approximate representation $\Ind \pi_n$ of $G$.
By (b) we must have that $(\Ind \pi_n)(h_0)$ is far from $\myu$, so it follows from  (a) that $\Ind \pi_n$ cannot be close to an actual representation of $G$.
Hence the same proof would work for any group that satisfies (a) and (b).

We note that (\ref{enum:bsh}) is simply a restatement of the fact that $\baso(2,3)$ is not RF.
If we restrict to the class of MF groups we can use this fact directly to extend Theorem \ref{thm:bssubgroup} drastically.
Indeed, suppose $H$ is a subgroup of $G$ and that they both are MF.
If $H$ is not RF, then $G$ is not RF, so if $G$ is finitely generated then it cannot be \MWSP\ by Proposition \ref{prop:MFisNice}.
Note that $G$ is necessarily finitely generated if $H$ is finitely generated and the index of $H$ in $G$ is finite.
\end{remark}

\section{Matricial stability versus Hilbert-Schmidt stability}
The normalized Hilbert-Schmidt norm seems to be more friendly for stability questions than the operator norm.
For example all virtually abelian groups are Hilbert-Schmidt stable as was proved in \cite{HadwinShulmanGroups} while many of them are not matricially stable as we saw above.

In general we do not know if matricial stability implies Hilbert-Schmidt stability.
Below we show that this is the case for amenable groups.

Recall from \cite{TWW} that a trace $\tau$ on a $C^*$-algebra $A$ is {\it quasidiagonal} if  for every finite set $F$ of A and $\epsilon > 0$,
there exist a matrix algebra $\matrM{k}$ and a contractive completely positive (ccp) map $\phi \colon A \to \matrM{k}$ such that
\[
	\|\phi(ab) - \phi(a)\phi(b)\|\le \epsilon,
\]
for all  $a, b \in  F$, and
\[
	|\tr_{\matrM{k}}(\phi(a)) - \tau(a)| \le \epsilon,
\]
for any $a \in  F$.

\begin{theorem}\label{TWW}(Tikuisis, White, Winter \cite{TWW}, Cor. 6.1 + Cor. C) Let
$G$ be a discrete amenable group.
 Then every trace on $C^*(G)$ is quasidiagonal.
\end{theorem}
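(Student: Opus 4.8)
This is not something we establish from scratch: the statement is verbatim the content of the cited corollaries of Tikuisis, White and Winter, and the plan is simply to record how it follows from their work. The key external input is their quasidiagonality theorem, which asserts that every faithful trace on a separable, nuclear $C^*$-algebra satisfying the Universal Coefficient Theorem is quasidiagonal \cite[Cor.\ 6.1]{TWW}. So the first step is to check that $C^*(G)$ belongs to this class when $G$ is a countable discrete amenable group: it is separable because $G$ is countable; it is nuclear because $G$ is amenable (in which case the full and reduced group $C^*$-algebras coincide); and it satisfies the UCT because the Baum--Connes conjecture with coefficients holds for amenable groups (Higson--Kasparov), which by Tu's work places $C^*(G)$ in the bootstrap class.

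The second step is to remove the faithfulness hypothesis. Given an arbitrary trace $\tau$ on $C^*(G)$, the set $I_\tau=\{a\in C^*(G): \tau(a^*a)=0\}$ is a closed two-sided ideal, and $\tau$ factors through a faithful trace $\bar\tau$ on the separable nuclear quotient $C^*(G)/I_\tau$. Applying the faithful case to $\bar\tau$ and precomposing the approximating ccp maps with the quotient map produces the required approximations for $\tau$ itself. This reduction, together with the verification that the relevant quotient still lies in the appropriate bootstrap class, is exactly what is packaged in \cite[Cor.\ 6.1]{TWW}, and the specialization to amenable groups is stated outright as \cite[Cor.\ C]{TWW}. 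Hence every trace on $C^*(G)$ is quasidiagonal.

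The only ``difficulty'' is that all of the genuine work sits inside the TWW machinery, which is entirely external to the present paper; here the theorem is invoked as a black box, to be used in the sequel in order to deduce, for amenable groups, that matricial stability in the operator norm forces Hilbert--Schmidt stability. We therefore simply cite it.
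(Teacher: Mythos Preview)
Your proposal is correct and matches the paper exactly: the paper gives no proof at all for this statement, simply recording it as a citation of \cite[Cor.~6.1 and Cor.~C]{TWW} in the theorem header and using it as a black box in the subsequent proposition. Your additional sketch of why $C^*(G)$ lies in the TWW hypotheses (separability, nuclearity, UCT via Higson--Kasparov/Tu) and the reduction from arbitrary to faithful traces is accurate and helpful context, but goes beyond what the paper itself provides.
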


Recall that a \emph{character} of a group $G$ is a positive definite function on $G$ which is
constant on conjugacy classes and takes value 1 at the unit.

\begin{proposition}\label{MWSPversusMatricialStability}
Let $G$ be an amenable matricially stable group.
Then $G$ is Hilbert-Schmidt stable.
\end{proposition}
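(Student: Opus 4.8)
The plan is to deduce Hilbert-Schmidt stability from matricial stability by going through the trace. The key idea is that an ``approximate representation with respect to the normalized Hilbert-Schmidt norm'' of an amenable group $G$ gives rise, via an ultraproduct, to a trace on $C^*(G)$; by Theorem \ref{TWW} this trace is quasidiagonal, which — after an application of matricial stability — should allow us to perturb the given near-representation into a genuine representation, with control in the $2$-norm. Concretely, suppose $\phi_n\colon G \to \mathcal U(\matrM{k_n})$ (or into unitary groups of finite von Neumann algebras with traces $\tau_n$) is an approximate homomorphism in the Hilbert-Schmidt sense, i.e. $\|\phi_n(g_1g_2)-\phi_n(g_1)\phi_n(g_2)\|_{2,\tau_n}\to 0$ for all $g_1,g_2\in G$. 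Assembling these into a map into a tracial ultraproduct $\prod_\omega (\matrM{k_n},\tau_n)$, one obtains a genuine unitary representation of $G$, hence a $*$-homomorphism $\Phi\colon C^*(G)\to \prod_\omega(\matrM{k_n},\tau_n)$, and composing with the trace on the ultraproduct yields a trace $\tau$ on $C^*(G)$.

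The second step is to invoke quasidiagonality of $\tau$ (Theorem \ref{TWW}, using amenability) to produce ccp maps $\psi_m\colon C^*(G)\to \matrM{l_m}$ that are asymptotically multiplicative and asymptotically trace-preserving on finite subsets. Since $C^*(G)$ is generated by the (finitely many, if $G$ is finitely presented — and one reduces to this case, or works with an exhaustion) unitaries $u_g$, the images $\psi_m(u_g)$ can be polar-corrected to unitaries $w_g^{(m)}\in\matrM{l_m}$ that still form an approximate homomorphism of $G$ in the operator norm. Now matricial stability of $G$ (via Proposition \ref{prop:mwspandalmostreps}) supplies honest representations $\rho_m\colon G\to\mathcal U(\matrM{l_m})$ with $\|\rho_m(g)-w_g^{(m)}\|\to 0$ in operator norm, hence also in normalized $2$-norm, and with $\tr_{\matrM{l_m}}(\rho_m(g))$ close to $\tau(u_g)$.

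The third step is to compare the original data $\phi_n$ with the representations $\rho_m$ at the level of traces and conclude. The point is that Hilbert-Schmidt stability of $G$ is equivalent (by the standard ultraproduct reformulation) to the statement that every $*$-homomorphism $C^*(G)\to\prod_\omega(\matrM{k_n},\tau_n)$ lifts to a $*$-homomorphism into $\prod_n(\matrM{k_n},\tau_n)$; equivalently, in trace-theoretic terms, that the relevant trace is a limit in an appropriate sense of matricial microstates coming from \emph{representations}. Having produced, via the two steps above, matrix representations $\rho_m$ whose traces converge to $\tau$, one patches these together — reindexing and using a diagonal argument across the finite subsets $F$ and tolerances $\epsilon$ — to build a representation-valued lift of $\Phi$ into the ultraproduct, which is exactly what Hilbert-Schmidt stability requires. (One uses here that for amenable, hence MF and indeed RFD-or-not, groups the class of traces arising from finite-dimensional representations is weak-$*$ dense in the quasidiagonal traces, which is the content being extracted from Theorem \ref{TWW} combined with matricial stability.)

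The main obstacle is the last step: bridging from ``the trace $\tau$ is approximated by traces of genuine finite-dimensional representations'' back to ``the original Hilbert-Schmidt approximate homomorphism $\phi_n$ is close to an honest representation in $\|\cdot\|_{2,\tau_n}$''. This requires care with the ultrafilter and a diagonalization that simultaneously handles all group elements and the approximation tolerances; moreover one must check that the operator-norm perturbation supplied by matricial stability does not destroy the trace estimates (it does not, since operator-norm closeness dominates $2$-norm closeness and controls trace differences up to a factor of the dimension-independent trace). The amenability hypothesis enters \emph{only} through Theorem \ref{TWW}, guaranteeing quasidiagonality of every trace on $C^*(G)$; without it one cannot pass from an abstract trace to matricial microstates, which is precisely why the result is stated for amenable $G$.
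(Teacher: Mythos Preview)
Your proposal follows essentially the same route as the paper: apply Theorem~\ref{TWW} to obtain, for a given trace $\tau$ on $C^*(G)$, approximately multiplicative ccp maps into matrix algebras whose traces approximate $\tau$, then invoke matricial stability to upgrade these to genuine $*$-homomorphisms, concluding that $\tau$ (restricted to $G$, a character) is a pointwise limit of traces of finite-dimensional representations.

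The difference lies in the final step, which you correctly flag as the main obstacle. The paper closes the argument in one line by citing a characterization due to Hadwin and Shulman \cite{HadwinShulmanGroups}: for amenable groups, Hilbert--Schmidt stability is \emph{equivalent} to the condition that every character is a pointwise limit of traces of finite-dimensional representations. This is exactly the ``bridge'' you are trying to build by hand; your proposed diagonal/patching argument would amount to reproving the relevant direction of that theorem, and as you note, the representations $\rho_m$ you produce live in matrix algebras $\matrM{l_m}$ of sizes unrelated to the original $\matrM{k_n}$, so comparing them directly to the $\phi_n$ is genuinely nontrivial. Two further remarks: first, the paper's proof does not start from a specific HS-approximate homomorphism at all---it simply takes an arbitrary trace on $C^*(G)$ and shows it is approximated by representation traces, which avoids the ultraproduct detour entirely; second, amenability is used \emph{twice}, once for Theorem~\ref{TWW} and once for the Hadwin--Shulman characterization, not only for the former as you suggest.
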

\begin{proof}
Let $\tau$ be a trace on $C^*(G)$.
By Theorem \ref{TWW} there exist almost multiplicative ccp maps $\phi_k \colon C^*(G) \to \matrM{n_k}$ such that
\[
	\tau(a) = \lim_{k\to \infty} \tr_{\matrM{n_k}}(\phi_k(a)),
\]
for each $a\in C^*(G)$.
Since $G$ is matricially stable, there exist $*$-homomorphisms $\psi_k \colon C^*(G) \to \matrM{n_k}$ such that
\[
	\|\psi_k(a) - \phi_k(a)\|\to 0
\]
and hence
\[
	\tau(a) = \lim_{k\to \infty} \tr_{\matrM{n_k}}(\psi_k(a)),
\]
for each $a\in C^*(G)$.
Since the restriction of any trace  of $C^*(G)$ onto $G$  is  a character, we conclude that each character of $G$ is a pointwise limit of traces of finite-dimensional representations. As was proved by D.~Hadwin and the second-named author in \cite{HadwinShulmanGroups}, for amenable groups  this condition is equivalent to Hilbert-Schmidt stability.
\end{proof}

\section{Open questions}

We list the following open questions and discuss them briefly.

\begin{enumerate}
\item Must a \SP\ group be finitely generated?
Must a \SP\ group be virtually free?\\[5mm]
\emph{In other words, are the sufficient conditions of Theorem \ref{vfissp} also necessary? We have little evidence to support this, but at least we have confirmed it in the abelian case (details will appear elsewhere). Also one may combine results from \cite{KRTW} and \cite{ed:cssc} to see that any torsion-free virtually abelian group (not necessarily finitely generated) which is \SP\ must be virtually free.}

\item Suppose $H$ is a finite index subgroup of $G$. Does  $H$ being \MWSP/\WSP/\SP $\;$ imply that $G$ is?\\[5mm]
\emph{Note that we have shown that the opposite direction fails for matricial stability. The fact that neither of the non-\MWSP\ wallpaper groups contain any of the \MWSP\ wallpaper groups by \cite[Table 4]{coxmos} may support this claim.}

\item When is $\baso(n, m)$ \MWSP/\WSP/\SP ? In particular is $\baso(n, m)$ matricially stable only when $n=\pm 1$, $m = \mp 1$?\\[5mm]
\emph{The metabelian case $n=1$, $m>1$ is particularly interesting, and indeed the structure of the associated $C^*$-algebra is rather well understood in this case, cf. \cite{brenken,pooyavalette}. We do not at present have any tools to address this case.}

\item Does matricial stability imply Hilbert-Schmidt stability?\\[5mm]
\emph{In \cite{HadwinShulman} D. Hadwin and the second-named author introduced a notion of matricial tracial stability for $C^*$-algebras which in the case of group $C^*$-algebras gives Hilbert-Schmidt stability.
The proof of Proposition \ref{MWSPversusMatricialStability}  can be modified to show that a separable nuclear matricially semiprojective $C^*$-algebra, all of whose quotients satisfy UCT, is matricially tracially stable.}

\item Can $C^*_r(G)$ be (matricially)(weakly) semiprojective when $G$ is non-amenable?\\[5mm]
\emph{
We have already provided examples of some \SP\ groups that are amenable and some that are not. In fact, when $G$ is not amenable we think it is unlikely that $C^*_r(G)$ is (matricially)(weakly) semiprojective.
 Even for the free group $\FF_2$,  $C^*_r(\FF_2)$ is not matricially semiprojective. Indeed it is MF by \cite{HaagerupThorbjornsen} and has no finite-dimensional representations, hence is not matricially semiprojective.}
\end{enumerate}


\end{document}